\documentstyle[11pt,amsmath,amssymb,graphicx,xcolor,hyperref,url,array,multirow,caption,enumitem,natbib,float,tikz,pgfplots,fontenc,amsthm,amsfonts]{article}

\addtolength{\oddsidemargin}{-.86in}
\addtolength{\evensidemargin}{-.86in}
\addtolength{\textwidth}{1.75in}

\addtolength{\topmargin}{-.85in}
\addtolength{\textheight}{1.75in}




\renewcommand\baselinestretch{1.1}

\def\style{authordate4}







\usetikzlibrary{arrows,calc,angles,positioning,intersections,quotes,decorations.markings,backgrounds,patterns}
\usetikzlibrary{decorations.pathreplacing}
\pgfplotsset{compat=1.11}
\tikzset{
	mynode/.style={fill,circle,inner sep=1pt,outer sep=0pt}
}

\newcommand{\mbf}[1]{\mbox{\boldmath $#1$}}
\newcommand{\ba}{{\mbf \beta}}

\setcounter{page}{1}

{\catcode `\@=11 \global\let\AddToReset=\@addtoreset}
\AddToReset{equation}{section}

\AddToReset{Theorem}{section}

\newtheorem{cor}{Corollary}[section]
\newtheorem{lem}{Lemma}[section]
\newtheorem{rem}{Remark}[section]
\newtheorem{thm}{Theorem}[section]

\newtheorem{exmp}{Example}[section]

\newcommand{\cA}{{\cal A}}
\newcommand{\cB}{{\cal B}}
\newcommand{\cC}{{\cal C}}

\newcommand{\cE}{{\cal E}}

\newcommand{\cG}{{\cal G}}

\newcommand{\cK}{{\cal K}}

\newcommand{\cN}{{\cal N}}

\newcommand{\cT}{{\cal T}}
\newcommand{\cU}{{\cal U}}

\def\ba{\begin{array}}
	\def\bc{\begin{center}}
		\def\bd{\begin{description}}
			\def\be{\begin{enumerate}}
				\def\ea{\end{array}}
			\def\ec{\end{center}}
		\def\ed{\end{description}}
	\def\edt{\end{document}}
\def\ee{\end{enumerate}}
\def\ben{\begin{equation}}
\def\benn{\begin{equation*}}
\def\een{\end{equation}}
\def\eenn{\end{equation*}}
\def\benr{\begin{eqnarray}}
\def\eenr{\end{eqnarray}}
\def\benrr{\begin{eqnarray*}}
\def\eenrr{\end{eqnarray*}}

\def\al{\alpha}
\def\b{\beta}

\def\d{\dot}

\def\edt{\end{document}}

\def\g{\gamma}

\def\h{\hat}
\def\ka{\kappa}

\def\iny{\infty}
\def\ka{\kappa}

\def\la{\lambda}
\def\lel{\label}

\def\noi{\noindent}

\def\nn{\nonumber}

\def\r{\ref}

\def\si{\sigma}

\def\sti{\sum_{i=1}^n}

\def\Si{\Sigma}

\def\vep{\varepsilon}

\def\vs{\vskip}

\def\A{{\mathbb A}}
\def\R{{\mathbb R}}

\def\z{\zeta}

\setcounter{page}{1}


\DeclareMathOperator*{\argmin}{arg\,min}

\begin{document}

\bc
{\Large {\bf Detection and estimation of parameters in high dimensional multiple change point regression models via $\ell_1\big/\ell_0$ regularization and discrete optimization}}\\[.2cm]
Abhishek Kaul$^{a,}$\footnote{{\it Address for correspondence}: Abhishek Kaul, Department of Mathematics and Statistics,  Washington State University, Pullman, WA 99164, USA. Email: abhishek.kaul@wsu.edu.}, Venkata K. Jandhyala$^a$, Stergios B. Fotopoulos$^b$\\[.1cm]

$^a$Department of Mathematics and Statistics, $^b$Department of Finance and Management Science,  
Washington State University, Pullman, WA 99164, USA.


\ec
\vs .1in
{\renewcommand{\baselinestretch}{1}
	\begin{abstract}
	Binary segmentation, which is sequential in nature is thus far the most widely used method for identifying multiple change points in statistical models. Here we propose a top down  methodology called {\it arbitrary segmentation} that proceeds in a conceptually reverse manner. We begin with an arbitrary superset of the parametric space of the change points, and locate unknown change points by suitably filtering this space down. Critically, we reframe the problem as that of variable selection in the change point parameters, this enables the filtering down process to be achieved in a single step with the aid of an $\ell_0$ regularization, thus avoiding the sequentiality of binary segmentation. We study this method under a high dimensional multiple change point linear regression model and show that rates convergence of the error in the regression and change point estimates are near optimal. We propose a simulated annealing (SA) approach to implement a key finite state space discrete optimization that arises in our method. Theoretical results are numerically supported via simulations. The proposed method is shown to possess the ability to agnostically detect the `no change' scenario. Furthermore, its computational complexity is of order $O(Np^2)+{\rm SA},$ where SA is the cost of a SA optimization on a $N$ (no. of change points) dimensional grid. Thus, the proposed methodology is significantly more computationally efficient than existing approaches. Finally, our theoretical results are obtained under weaker model conditions than those assumed in the current literature.
\end{abstract} }
\noi {\bf Keywords:   Multiple change points, Multiphase regression, High dimensional regression, $\ell_1,$ $\ell_0$ regularization, Simulated annealing.}

\section{Introduction}\label{sec:intro}

High dimensional regression models that allow vastly larger number of parameters $p$ than the sample size $n,$ have found applications in many fields of scientific inquiry such as genomics, social networking, empirical economics, finance among many others. This has led to a rapid development of statistical literature investigating methods capable of analyzing such models and data sets. One of the most successful methods for analysing high dimensional regression models has been the Lasso, which is based on the least squares loss and $\ell_1$ regularization (\cite{tibshirani1996regression}). Innumerable investigations have since been carried out to study the behavior of the Lasso estimator and its various modifications in many different settings (see e.g., \cite{zou2006adaptive}; \cite{zhao2006model}; \cite{bickel2009simultaneous}; \cite{belloni2011square} \cite{belloni2017pivotal}; \cite{kaul2014lasso}, \cite{kaul2015weighted} and the references therein). For a general overview on the developments of Lasso and its variants we refer to the monograph of \cite{buhlmann2011statistics} and the review article of \cite{tibshirani2011regression}. All aforementioned articles provide results in a regression setting where the parameters are dynamically stable. In contrast, multiphase/change point regression models provide a dynamic setting in which regression parameters are allowed to switch values based on a change inducing variable or in a time ordered sense. Such models allow for a greater versatility in modelling data, especially in a high dimensional setting. In many experiments, the estimated locations of change points may reveal additional critical information of interest.

In the past few years several articles have studied high dimensional change point models in an `only means' setup. In this setting, change points are characterized with respect to dynamic mean vectors of time ordered random vectors, where the dimension of the observation vector may be larger than the number of observations (\cite{cho2015multiple}, \cite{fryzlewicz2014wild},and \cite{wang2018high}; among others).  Another context in which high dimensional change point models have been investigated is that of a dynamic covariance structure which is related to the study of evolving networks (\cite{roy2017change},\cite{gibberd2017multiple},  \cite{atchade2017scalable}; among others). In contrast, change point methods for high dimensional linear regression models have received much less attention and only a select few articles have considered this problem in the recent literature.

In this paper, we consider a high dimensional multiphase (change point) regression model given by,
\benr\label{cp}
y_i=\sum_{j=1}^{N+1} x_i^T\b_{(j-1)}^0{\bf 1}[\tau_{j-1}^0 < w_i \le\tau_j^0] +\vep_i,\quad i=1,..,n,
\eenr
where $N\ge 0,$ $\tau_0^0=-\iny,$ $\tau_{N+1}^0=\iny,$ and ${\bf 1}[\cdot]$ represents the indicator function. The components of the change point parameter vector are assumed to be $\tau^0=(\tau^0_1,...,\tau^0_N)^T\in\bar\R^{N},$ $\bar\R=\R\cup\{-\iny\}$ such that $\tau_{j-1}^0\le \tau_j^0,$ $j=1,...,N.$ First note that when $\tau_0=\tau_1^0=...=\tau_N^0=-\iny,$ model (\ref{cp}) reduces to an ordinary linear regression model without change points. This case where all change points are at negative infinity characterizes the case of `no change', and in the following we refer to this case as $N=0.$ On the other hand we characterize the case of one or more change points, $N\ge 1,$ as when the components of $\tau^0$ are distinct and finite, i.e., $-\iny<\tau^0_1<...<\tau^0_N.$ The observed variables in model (\ref{cp}) are the response $y_i\in\R,$ the $p$-dimensional predictors $x_i\in\R^p,$ and change inducing variable $w_i\in\R.$ The parameters of interest are the number of change points $N\in\{0,1,2,...\},$ the change point parameter vector $\tau^0\in\bar\R^{N},$ and the regression parameters $\b_{(j)}^0\in\R^p,$ $j=0,...,N.$ The change points $\tau_{j}^0$ $j=1,..,N$ represents threshold values of the variable $w$ subsequent to which the regression parameter changes from its current value $\b_{(j-1)}$ to a new value $\b_{(j)}.$ Furthermore, we let $p>>n,$ so that model (\r{cp}) corresponds to a high dimensional setting.

In the classical setting with a fixed number of parameters and $n\to\iny,$ change point regression models such as (\r{cp}) have been extensively investigated, albeit a large proportion of this literature is developed in the case with only a single change point. The works of \cite{hinkley1970inference}, \cite{hinkley1972time},
\cite{jandhyala1997iterated}, \cite{bai1997estimation}, \cite{jandhyala1999capturing}, and \cite{jandhyala2013inference}, investigate the setting where parameters are assumed to
change at certain unknown time points of the sampling period. On the other hand, the works of \cite{hinkley1969inference}, \cite{koul2002asymptotics}, and
\cite{koul2003asymptotics}  study the setting where the change point is formulated based on one or more covariate thresholds. In the literature, the latter approach is typically referred to as two-phase or multiphase regression, however it is also common to broadly call both as change point regression models.

The literature on regularized estimation in change point regression models is very sparse. Models similar to (\r{cp}) with a single change point have been studied by \cite{kaul2018parameter}, \cite{lee2016lasso}, and \cite{lee2018} in the high dimensional setting. The case of multiple change points is investigated in \cite{ciuperca2014model}, \cite{zhang2015multiple}, , \cite{jin2016consistent} and \cite{leonardi2016computationally}. Amongst these articles the first three consider the fixed $p$ setting, whereas the last article considers the high dimensional setting as is also the case in this paper. The article of \cite{leonardi2016computationally} proposes a binary segmentation approach for the recovery of change points of the regression model, where change points are searched for, and then added to the set of all change points one by one. In the context of change point parameters, this binary segmentation approach can be viewed as the counterpart of step-up regression where parameters are included sequentially. It is important to\ remember that in the current high dimensional setting, in order to search for a single change point for each segment, the approach of \cite{leonardi2016computationally} requires $O\big(n{\rm  Lasso}(n)\big)$ computations, where Lasso$(n)$ represents the computational cost of running one Lasso optimization with a sample size $n.$ In fact the authors show that the overall computation cost of their approach is of the order O\big($n\log(n)$Lasso$(n)$\big).

In contrast, our approach proceeds in a conceptually reverse manner. The method that we propose can be viewed in a sense as the counterpart of step-down regression for the change point parameters. We begin with a superset of the parameteric space of the unknown change points and filter this space down to identify the unknown change points, following which we estimate the regression parameters. Critically, the `stepping down' process in our methodology can be carried out in a single step via a $\ell_0$ regularization. We achieve this by converting the problem of recovery of change points to a variable selection problem in the change point parameters. This conversion of the change point estimation problem to a variable selection problem in turn relies on initial regression estimates. The second main novelty of this manuscript is to show that, initial regression estimates that are much slower than optimal in rates of convergence can be utilized to obtain change point estimates that are themselves near optimal in rates of convergence. In other words, our setup constitutes a rare statistical scenario where relatively `poor' estimates of some parameters of a model can be utilized to obtain near optimal estimates of other parameters of the model.

The proposed method circumvents the sequential approach of binary segmentation for the recovery of change points. Consequently, the method requires only Lasso$(n)+$SA computations for the identification and recovery of change points, where SA represents the computational cost of a simulated annealing optimization which is typically very efficient. The simulated annealing algorithm is used to implement a key discrete optimization over an $O(N)$ dimensional space that arises in our methodology due to the use of an $\ell_0$ regularization. Thus our approach is far more efficient than any existing comparable methodology for high dimensional change point regression models. Being based on a $\ell_0$ regularization, our approach also provides the ability to detect the case of $N=0,$ where an ordinary linear regression without change points is more appropriate. In comparison, binary segmentation approaches typically require the existence of at least one change point. Finally, we also note that our analysis requires significantly weaker assumptions than those currently assumed in the literature. Further comparisons of our method, assumptions and results with the existing literature are made in Section \ref{sec:model}.

The remainder of this article is organized as follows. Section \ref{sec:model} provides the proposed methodology and technical assumptions required for the theoretical analysis. Section \ref{sec:Main} provides the main theoretical results regarding the performance of the proposed methodology. Section \ref{sec:implement} discusses the implementation of the proposed method and a simulated annealing approach for the implementation of a key step of our method. This section also provides numerical results on the finite sample performance of our method. The proofs of all main results are provided in Appendix A of the Supplementary materials of this article. Some additional technical results and lemma's are provided in Appendix B of the Supplementary materials.

\vspace{1mm}
\textbf{\textit{Notations}}: We conclude this section with a short note on the notations used in this paper. Throughout the paper, for any vector $\delta\in \R^p,$ $\|\delta\|_0$ represents the number of non-zero components in $\delta,$ and  $\|\delta\|_1$ and $\|\delta\|_2$ represent the usual $1$-norm and Euclidean norm, respectively. The norm $\|\delta\|_{\iny}$ represents the usual sup norm, i.e., the maximum of absolute values of all elements.  For any set of indices $S\subseteq \{1,....,p\},$ let $\delta_S=(\delta_j)_{j\in S}$ represent a sub-vector of $\delta$ containing components corresponding to the indices in $S.$ Also, let $|S|$ represent the cardinality of the set $S.$ The notation ${\bf 1}[\cdotp]$ represents the usual indicator function. We denote by $\Phi (\cdotp)$ the cdf of $w_i'$s and  let $d(\tau_a,\tau_b)=P(\tau_a< w_i\le \tau_b)=\Phi(\tau_b)-\Phi(\tau_a),$ $\tau_a\le\tau_b\in\bar\R,$ clearly, $d(\tau_a,\tau_b)=0 \Leftrightarrow  \tau_a=\tau_b.$  We represent by $\bar\R=\R\cup\{-\iny\}$ as the extended Euclidean space, with only the left closure point included. We shall also use the notation $a\vee b=\max\{a,b\},$ and $a\wedge b=\min\{a,b\},$ $a,b\in\R.$ The notation $c_u,c_m$ is used to represent generic constants that may be different from one line to the next. Here, $0<c_u<\iny$ represent universal constants, whereas $0<c_m<\iny$ are constants that depend on model parameters such as variance parameters of underlying distributions. Lastly, $0<c_1,c_2<\iny$ are also generic constants that may depend on both $c_u,$ and $c_m.$

\section{Methodology and Related Work}\label{sec:model}

\subsection{Proposed methodology}

For any $\tau_a,\tau_b\in\bar\R,$ $\tau_a\le\tau_b$ and any $\g\in\R^p,$ define the segmentwise least squares loss as,
\benr\label{def:qstar}
Q^*(\g,\tau_a,\tau_b):=\sum_{i=1}^{n} (y_i-x_i^T\g)^2{\bf 1}[\tau_a< w_i\le \tau_b],
\eenr
where the indicator function ${\bf 1}[\tau_a<w_i\le \tau_b]=0$\footnote{This is a slight misuse of notation, and is only used for simpler exposition. To be notationally precise, this term should be ${\bf 1}[w_i\le \tau_b]-{\bf 1}[w_i\le \tau_a]$, for $\tau_a\le \tau_b$} if $\tau_a=\tau_b.$ For any $\check N\ge 1,$ let $\check \tau:=(\check\tau_1,...,\check\tau_{\check N})^T\in\bar\R^{\check N},$ be any vector such that $\check \tau_{j-1}\le\check\tau_j,$ $j=1,...,\check N,$ $\check \tau_0=-\iny.$ Also, let $\check\tau_{N+1}=\iny$ such that $(\check\tau_0,\check\tau^T,\check\tau_{\check N+1})^T$ forms a partition of $\bar\R.$ Additionally, for any sequence of vectors $\al_{(j)}\in\R^p,$ $j=0,...,\check N$ denote by $\al=(\al_{(0)}^T,....,\al_{(\check N)}^T)^T\in\R^{{(\check N+1)} p}$ as the concatenation of all $\al_{(j)}s.$ Then define the total least squares loss evaluated at $(\check N,\al,\check\tau)$ as,
\benr
Q(\check N,\al,\check\tau):= \frac{1}{n}\sum_{j=1}^{\check N+1} Q^*(\al_{(j-1)},\check\tau_{j-1},\check\tau_{j}).\nn
\eenr
Next, for any $\tau\in\R^{\check N},$ define $\h\cT(\tau)\subseteq\{1,...,\check N\}$ as the set of indices of distinct and finite components of $\tau,$ i.e.,
\benr\label{def:hatct}
\h\cT(\tau)=\big\{j\in \{1,...,\check N\}\,\,;\,\, \tau_{j-1}\ne \tau_j,\,\,\tau_j\ne-\iny\big\},
\eenr
where $\tau_0=-\iny.$ Under these notations and the model (\ref{cp}), we propose estimators for the number of change points, locations of change points and the regression coefficients respectively. These estimators are stated in the following as two algorithms each consisting of two steps. The first algorithm is designed to recover the number and locations of change points of the model (\ref{cp}) and the second to recover the corresponding regression coefficients. All technical assumptions required for the theoretical validity of the proposed estimates are stated in Section \ref{subsec:assumptions}.

\begin{figure}[H]
	\noi\rule{\textwidth}{0.5pt}
	
	\vspace{-3mm}
	\begin{flushleft}{\bf Algorithm 1:} Estimation of number and locations of change point(s)\end{flushleft}
	\vspace{-4.75mm}
	\noi\rule{\textwidth}{0.5pt}
	
	\vspace{-1mm}
	\begin{itemize}[wide, labelwidth=!, labelindent=0pt]
		\item[{\bf Step 0}:] ({\bf Initializing step}) Choose any $\check N\ge 1\vee N,$ and any vector $\check\tau=(\check\tau_1,...,\check\tau_{\check N})^T\in\R^{\check N},$ satisfying {\bf Condition A}. Compute initial regression estimates $\h\al_{(j)},$ for each $j=0,...,\check N,$
		\vspace{-1mm}
		\benr
		\h\al_{(j)}=\argmin_{\al\in\R^{p}}\Big\{\frac{1}{n}Q^*(\al, \check\tau_{j-1},\check\tau_j)+\la_0\|\alpha\|_1 \Big\},\qquad \la_0>0\nn
		\eenr
		\vspace{-4mm}
		\item[{\bf Step 1}:]  Update $\check \tau\in\R^{\check N}$ to obtain estimate $\hat \tau\in\bar\R^{\check N}$, where\footnotemark,
		\vspace{-2mm}
		\benr
		\h\tau=\argmin_{\substack{\tau\in \bar\R^{\check N};\\ \tau_{j-1}\le\tau_j\, \forall\, j}}\Big\{Q(\check N, \h\al, \tau)+ \mu \sum_{j=1}^{\check N}\|d(\tau_{j-1},\tau_{j})\|_0 \Big\},\qquad \mu>0\nn
		\eenr
		\vspace{-4mm}
		
		\noi Let $\h\cT:=\h\cT(\h\tau),$ and update the estimated number of change points to $\tilde N = |\h \cT|,$ and recover the corresponding locations of change points as the subset $\tilde\tau=\h\tau_{\h\cT}\in\R^{\tilde N}.$
	\end{itemize}
	
	\vspace{-3mm}
	\noi\rule{\textwidth}{0.5pt}
\end{figure}

\footnotetext{Note that while the initializing $\check \tau$ in {\bf Step 0} is chosen in $\R^{\check N},$ however the optimization in {\bf Step 1} is performed over the extended Euclidean space $\bar\R^{\check N}.$}

Algorithm 1 begins ({\bf Step 0}) with a nearly arbitrary partition $\check\tau$ in a superset $\R^{\check N}$ of the parametric space $\R^{N}$ of the unknown change points. The simple update in {\bf Step 1} of Algorithm 1 recovers the number of change points $\tilde N,$ and the corresponding locations $\tilde\tau.$ There are two main novelties of Algorithm 1. First, instead of searching for change points sequentially, Algorithm 1 searches for them in a larger parametric space by reframing the problem as one of variable selection. Here the selection is in terms of differences between adjacent $\tau_j$'s, i.e., the $\ell_0$ regularization in {\bf Step 1} is forcing these adjacent components to collapse towards each other. This regularization can be viewed as a $\ell_0$ version of the total variation penalty on the components $\tau.$ The second main novelty is that in order to achieve this conversion to a variable selection problem, we use a nearly arbitrary partition that serves as an initial rough guess. It shall become theoretically and empirically apparent in the following that the estimates obtained in {\bf Step 1} are robust against this initial partition, i.e., nearly any arbitrarily chosen partition in {\bf Step 0} shall yield near optimal estimates from {\bf Step 1}. The underlying working mechanism of Algorithm 1 is illustrated in Figure \ref{fig:actionmechanism}.

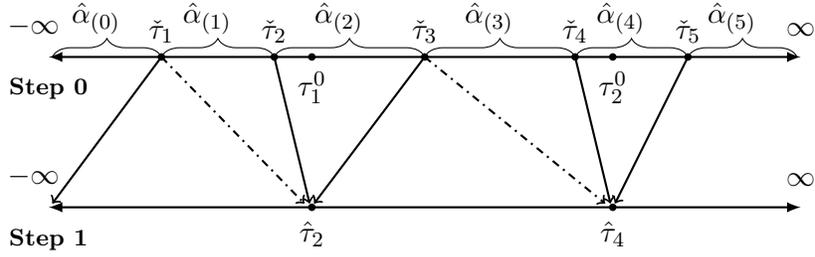
\begin{figure}[]
	\centering{
		\begin{tikzpicture}
		\draw[black,thick,latex-latex] (0,0) -- (10,0)
		node[pos=0,label=below:\textcolor{black}{\footnotesize{\bf Step 0}}]{}
		node[pos=-0.02,label=above:\textcolor{black}{$-\iny$}]{}
		node[pos=0.35,mynode,fill=black,label=below:\textcolor{black}{$\tau_1^0$}]{}
		node[pos=0.75,mynode,fill=black,label=below:\textcolor{black}{$\tau_2^0$}]{}
		node[pos=0.15,mynode,fill=black,label=above:\textcolor{black}{$\check\tau_1$}]{}
		node[pos=0.30,mynode,fill=black,label=above:\textcolor{black}{$\check\tau_2$}]{}
		node[pos=0.50,mynode,fill=black,label=above:\textcolor{black}{$\check\tau_3$}]{}
		node[pos=0.70,mynode,fill=black,label=above:\textcolor{black}{$\check\tau_4$}]{}
		node[pos=0.85,mynode,fill=black,label=above:\textcolor{black}{$\check\tau_5$}]{}
		node[pos=1,label=above:\textcolor{black}{$\iny$}]{};
		\draw[decorate,decoration={brace,amplitude=7pt}] (0.05,0) --node[above left=5pt and -9pt]{$\h\al_{(0)}$} (1.5,0);
		\draw[decorate,decoration={brace,amplitude=7pt}] (1.5,0) --node[above left=5pt and -9pt]{$\h\al_{(1)}$} (3,0);
		\draw[decorate,decoration={brace,amplitude=7pt}] (3,0) --node[above left=5pt and -9pt]{$\h\al_{(2)}$} (5,0);
		\draw[decorate,decoration={brace,amplitude=7pt}] (5,0) --node[above left=5pt and -9pt]{$\h\al_{(3)}$} (7,0);
		\draw[decorate,decoration={brace,amplitude=7pt}] (7,0) --node[above left=5pt and -9pt]{$\h\al_{(4)}$} (8.5,0);
		\draw[decorate,decoration={brace,amplitude=7pt}] (8.5,0) --node[above left=5pt and -9pt]{$\h\al_{(5)}$} (9.95,0);
		\draw[black,thick,latex-latex] (0,-2) -- (10,-2)
		node[pos=0,label=below:\textcolor{black}{\footnotesize{\bf Step 1}}]{}
		node[pos=-0.02,label=above:\textcolor{black}{$-\iny$}]{}
		node[pos=0.35,mynode,fill=black,label=below:\textcolor{black}{$\h\tau_2$}]{}
		node[pos=0.75,mynode,fill=black,label=below:\textcolor{black}{$\h\tau_4$}]{}
		node[pos=1,label=above:\textcolor{black}{$\iny$}]{};
		\draw[->,thick] (1.5,0)--(0.05,-1.95);
		\draw[->,thick] (3,0)--(3.47,-1.95);
		\draw[->,thick] (5,0)--(3.53,-1.95);
		\draw[->,thick] (7,0)--(7.47,-1.95);
		\draw[->,thick] (8.5,0)--(7.53,-1.95);
		\draw [->,thick,dash dot] (1.5,0) -- (3.40,-1.95);
		\draw [->,thick,dash dot] (5,0) -- (7.40,-1.95);
		\end{tikzpicture}
		\caption{\footnotesize{Illustration of the working mechanism of Algorithm 1 with $N=2,$ $\check N=5$: Initializing with a nearly arbitrary partition $\check \tau=(\check \tau_1,...,\check \tau_5)^T$ and corresponding regression coefficient estimates $\h\al_{(j)},$ $j=0,..,5,$ the algorithm converges to the unknown number of change points and the locations of the distinct change points lie in an optimal neighborhood of the unknown change points. Specifically, the components nearest to an unknown $\tau^0_j$ shall converge toward it, and all remaining components converge either to the previous or the next component.}}
		\label{fig:actionmechanism}}
\end{figure}

Next, we propose Algorithm 2 for the estimation of regression parameter vectors of model (\ref{cp}). This algorithm utilizes the estimated number ($\tilde N$) and locations ($\tilde\tau$) of change points from Algorithm 1, to obtain coefficient estimates on the corresponding partition yielded by $\tilde\tau.$ Note that when $\tilde N=0$ from Algorithm 1, then Algorithm 2 is equivalent to implementing the ordinary Lasso on the data $(x_i,y_i),$ $i=1,...,n.$

\begin{figure}[]
	\noi\rule{\textwidth}{0.5pt}
	
	\vspace{-3mm}
	\begin{flushleft}{\bf Algorithm 2:} Estimation of regression coefficients\end{flushleft}
	\vspace{-4.75mm}
	\noi\rule{\textwidth}{0.5pt}
	
	\vspace{-1mm}
	\begin{itemize}[wide, labelwidth=!, labelindent=0pt]
		\item[{\bf Step 0}:] Compute $\tilde N$ and $\tilde \tau$ from Algorithm 1. If $\tilde N=0,$ then fit a linear regression model without change points via Lasso.
		
		\item[{\bf Step 1}:] If  $\tilde N\ge 1,$ then for each $j=0,...,\tilde N,$ update regression parameter estimates,
		\vspace{-1mm}
		\benr
		\tilde\al_{(j)}= \argmin_{\al\in\R^{p}}\Big\{\frac{1}{n}Q^*(\al, \tilde\tau_{j-1}, \tilde\tau_j)+\la_{1j}\|\al\|_1\Big\},\qquad \la_{1j}>0.\nn
		\eenr
	\end{itemize}
	
	\vspace{-3mm}
	\noi\rule{\textwidth}{0.5pt}
\end{figure}

The main theoretical contribution of this manuscript is to show that the proposed methodology consistently recovers the unknown number of change points, and yields estimates of the locations of change points and that of regression coefficient vectors that are near optimal in their rates of convergence. Specifically, under suitable conditions, we shall derive the following relations that hold for $n$ sufficiently large with probability at least $1-c_1(1\vee N)\exp(-c_2\log p).$

\benr\label{eq:ratesintro}
&(i)&\,\, {\rm When}\,\, N\ge 0,\quad \tilde N= N,\\
&(ii)&\,\, {\rm When}\,\, N\ge 1,\quad \sum_{j=1}^{N} d(\tilde \tau_j,\tau^0_j)\le c_uc_mN\frac{s\log p}{n},\,\,{\rm and} \nn\\
&(iii)& \sum_{j=0}^N\|\tilde\al_{(j)}-\b^0_{(j)}\|_q \le c_uc_m Ns^{\frac{1}{q}}\sqrt{\frac{\log p}{n}},\quad q=1,2.\nn
\eenr

In an ordinary high dimensional linear regression model without change points, it has been shown that the optimal rate of convergence for a regression parameter vector estimate is $\sqrt{s\log p/n}$ under the $\ell_2$ norm (\cite{ye2010rate}, \cite{raskutti2011minimax}, \cite{belloni2017linear}). Also, the rate of convergence of the change point estimates in (\ref{eq:ratesintro}) matches the fastest available in the literature, see, e.g., (\cite{kaul2018parameter}, \cite{lee2016lasso}, \cite{leonardi2016computationally}, \cite{lee2018}).

The result in (\ref{eq:ratesintro}) is quite surprising since the estimates $\tilde N$ and $\tilde\tau$ are computed based on initial regression coefficient estimates $\h\al$ from {\bf Step 0} of Algorithm 1. These regression estimates may not be anywhere near optimal in their rate of convergence, since these are in turn computed based on a nearly arbitrary partition of the support of $w.$ Despite these rough regression estimates, we can prove that {\bf Step 1} of Algorithm 1 identifies the change points correctly and provides estimates that are indeed near optimal in their rate of convergence.

Next, we discuss two immediate concerns that may arise to the reader regarding Algorithm 1. First,  how stringent is Condition A on the initializers $\check N$ and $\check \tau$ in  {\bf Step 0} of this algorithm. This condition on the initializers is infact very mild. For Algorithm 1, where $\check N$ is user chosen, nearly any arbitrarily chosen partition with a large enough $\check N\ge 1\vee N,$ satisfies this condition. Other requirements of this condition are only meant to remove pathological cases, such as when all components of $\check \tau$ are closely clustered together or are concentrated at one end of the support of $w.$ From a practical perspective, an equally spaced large enough partition, as expected, works well in all empirically examined cases.

A second concern that may arise regarding Algorithm 1 is whether the optimization of {\bf Step 1} of these methods is computationally feasible. At first impression, the optimization of {\bf Step 1} does indeed appear to be computationally intensive given that it is a nonsmooth, nonconvex optimization (with no apparent convex relaxations), and with potentially multiple global optimums. However the following observations shall serve to erase this impression. Note that although this optimization of {\bf Step 1} is over the extended Euclidean space $\bar\R^{\check N},$ however, the loss function $Q(\check N, \h\al, \cdotp)$ is a step function over the finite grid $\tau\in\{-\iny,w_1,....,w_n\}^{\check N},$ with step change occurring at grid points on this $\check N$ dimensional grid (see, Figure \ref{fig:stepbehave.zeroinflate} in Section \ref{sec:implement} for an illustration of this behavior). Additionally, the $\ell_0$ norm term in the optimization of {\bf Step 1} is either $0$ or $1,$ based on whether $\tau_{j-1}$ and $\tau_{j}$ are equal or unequal respectively, in other words the distance between $\tau_{j-1}$ and $\tau_{j}$ does not influence the value of $\ell_0$ norm (note that this will not be true if in {\bf Step 1}, the $\ell_1$ norm is considered in place of the $\ell_0$ norm). These two observations together imply that any global optimum achieved in the extended Euclidean space $\bar\R^{\check N}$ is also attained at some $\check N$ dimensional point on the grid $\{-\iny,w_1,....,w_n\}^{\check N}.$ In other words, the optimization in {\bf Step 1} is reduced to a discrete optimization on a finite state space (the total number of possible states being $(n+1)^{\check N}$). In view of these observations, the optimization in {\bf Step 1} is reminiscent of the well known {\it travelling salesman problem}, and correspondingly can be solved efficiently using a simulated annealing approach. Additionally, since simulated annealing is not a gradient based approach, it is capable of easily handling a $\ell_0$ penalty. A detailed discussion of the implementation of Algorithm 1 is provided in Section \ref{sec:implement}. We also note that {\bf Step 0} of Algorithm 1, and {\bf Step 1} of Algorithm 2 are Lasso$\big(n, p\big)$ estimates. Thereby, these two steps are efficiently implementable using any one of the several available methods in the literature, for e.g. coordinate or gradient descent algorithms, see, e.g. \cite{hastie2015statistical}  or via interior point methods for linear optimization under second order conic constraints, see, e.g., \cite{koenker2014convex}.

Finally, we conclude this section by also emphasizing the computational efficiency of the proposed Algorithm 1. First note that {\bf Step 0} of Algorithm 1 are $(\check N+1)$ computations of Lasso$(n,p)$ estimates. It is also known that computational complexity of most algorithms for the Lasso optimization scales like $O(p^2).$ As briefly described above, {\bf Step 1} of Algorithm 1 shall be implemented via simulated annealing (SA) over a $\check N$ dimensional grid. SA optimizations are known to be very efficient for a large class of problems and can ordinarily be accomplished in a time scaling of order $O(\check N^4),$ see, e.g. \cite{sasaki1987optimization}. We also mention that in the worst case the complexity of SA can also be exponential, depending on the optimization under consideration. However, in our study the SA optimization of {\bf Step 1} is empirically observed to be well behaved and carried out with a cheap computational cost. Thus, assuming $\check N\le c(1\vee N),$ $c\ge 1,$ the overall complexity of Algorithm 1 is $O\big((1\vee N)p^2\big)+SA.$ Thereby, this algorithm is far more computationally efficient than any comparable existing method for the estimation of parameters of model (\r{cp}). To see this, compare the above complexity to the binary segmentation approach proposed in \cite{leonardi2016computationally}. They show that the their method is implementable with $O(n\log n)$Lasso$\big(n,(N+1)p\Big)$ computations with the aid of dynamic programming, thus the procedure effectively yields a time scaling of $O(nNp^2).$

\subsection{Assumptions}\label{subsec:assumptions}
In this subsection we state all necessary conditions and technical assumptions under which the results of this article are derived.

\vspace{2mm}
\noi{\bf Condition A (requirements of initializer):}\\~{\it
	(i) The initializing vector $\check\tau=(\check\tau_1,....,\check\tau_{\check N})^T\in \R^{\check N}$ is such that $\check N$ is larger than the true number of change points, i.e., $\check N \ge 1\vee N,$ and $\check N\le c_u(1\vee N),$ $c_u\ge 1.$\\~
	(ii) All initial change points are sufficiently separated, i.e., $d(\check\tau_{j-1},\check\tau_{j})> l_{\min}>0,$  for all $j=1,...,\check N+1,$ for some positive sequence $l_{\min},$ where $\check\tau_{0},\check\tau_{\check N+1}$ denote $-\iny$ and $\iny$ respectively.\\~
	(iii) Let ${\check u}_n=1\vee c_u(1/n)^{1/k},$ for some constants $k\in[1,\iny),$ and $c_u>0.$ Then assume that there exists a subset $\cT:=\{m_0,m_1,...,m_N,m_{N+1}\}\subseteq\{0,1,2,....\check N+1\}$ such that $m_0=0,$ $m_{N+1}=\check N+1$ and $\max_{1\le j\le N}d(\check\tau_{m_j}, \tau^0_{j})\le {\check u}_n.$ When $N=0,$ define $\cT:=\{m_0,m_{N+1}\}.$
}

\vspace{2mm}
As briefly discussed earlier, Condition A is a mild assumption on the initializers. Roughly speaking, this condition requires the initial change point vector to be a large enough partition of $\R$ where the components of this initializing vector are sufficiently separated from each other. Also, this condition requires that at least one initial change point lies in some fractional neighborhood of each unknown change point. The condition $d(\check\tau_{m_j}, \tau^0_{j})\le {\check u}_n,$ $j=1,...,N$ is very mild since the constant $k$ can be arbitrarily large\footnote{The constant $k$ can be arbitrarily large as long as the rate conditions of Condition B and C are satisfied.}. In one of our main results, we shall show that despite the initializers $\check\tau_{m_j}$ lying in an arbitrary fractional neighborhood of $\tau_j^0,$ the updated change point estimate $\tilde\tau$ satisfies $\|\tilde\tau-\tau^0\|_1\le Ns\log p/n,$ with high probability. Note that, the localization error bound of $\tilde\tau$ is free of $k.$ This condition is similar to Condition I assumed in \cite{kaul2018parameter}, we refer to that article for further insights on this condition. Here we also state that implementation of the proposed methodology does not require prior knowledge of $k.$

It is observed a large enough grid of equally separated initial change points works well in nearly all empirically examined cases. The term $l_{\min}$ in Condition A(i) is allowed to potentially decrease to zero with $n,$ however this dependence is suppressed for clarity of exposition. The rate at which such a convergence of $l_{\min}$ is allowed also depends on other model parameters, and is explicitly stated in Condition B(iii).

We can now define the $\check N$-dimensional parameter that {\bf Step 1} of Algorithm 1 is designed to recover in place of the $N$-dimensional $\tau^0.$ For this purpose first define
a set of indices $\cT^*=\{h_0,h_1,...,h_N,h_{N+1}\},$ where $\{h_1,...,h_{N}\}\subseteq\{1,...,\check N\},$ $h_0=0,$ and $h_{N+1}=\check N+1.$ Consider any $\tau\in\R^{\check N},$ satisfying $\tau_1\le\tau_2\le...\le\tau_{\check N},$ then the components $h_j$ of $\cT^*$ are defined as,
\benr\label{def:settstar}
\hspace{1cm} h_{j}=\min\Big\{k\in\{1,...,\check N\};\,\, k>m_{j-1};,\,\, \tau_{k}=\tau_{m_j}\Big\},\,\, {\rm for}\,\, j=1,...,N.
\eenr
where $\cT=\{m_0,m_1...,m_{\check N+1}\}$ is given in Condition A. Clearly, the construction of these indices depend on the choice of the vector $\tau\in\bar\R^{\check N},$ and the set $\cT.$ In the following this dependence is notationally suppressed for clarity of exposition, and is to be understood implicitly. The indices $h_j$'s are meant to capture the first index $j$ after $m_{j-1}$ for which $\tau_j=\tau_{m_j}.$ In the case where the chosen $\tau\in\bar\R^{\check N}$ is such that $\tau_{m_j-1}<\tau_{m_j},$ for all $j=1,..,N,$ then the set $\cT^*=\cT.$

Now define the vector $\tau^*=(\tau^*_1,...,\tau^*_{\check N})^T\in\bar\R^{\check N}$ such that,
\benr\label{def:taustar}
\hspace{1cm}\tau^*_{h_j}=\tau^0_j,\,\,j=1,...,N\,\,{\rm and}\,\, \tau_k^*=\tau^0_j,\,\, h_j \le k \le m_j,\,\,j=1,...N,
\eenr
and finally, $\tau^*_j=\tau^*_{j-1}$ for all remaining indices in the set $\cT^{*c},$ where, as before, $\tau^*_0=-\iny.$ Note that under the above definition of $\tau^*,$ the subset of finite and distinct components of this vector is exactly the unknown parameter vector $\tau^0,$ however the orientation or order in which they appear in this $\check N$-dimensional vector  may be different depending on the set $\cT^*$ and the chosen $\tau,$ as well as the set $\cT.$

To see the need for this non-traditional construction of the target parameter $\tau^*,$ first recall that the objective function in {\bf Step 1} of Algorithm 1 is non-convex, and consequently may have multiple global optimums. Now consider any such global optimum $\h\tau=(\h\tau_1,...,\h\tau_{\check N})^T\in\bar\R^{\check N}$ and let the orientation index set $\cT^*$ be defined in accordance with this optimum $\h\tau,$ together with the index set $\cT.$ Then, the $\tau^*$ constructed with corresponding $\cT^*,$ forms the target vector that $\h\tau$ is infact approximating. The non-traditional aspect of this construction is that the subset of finite and distinct components of the target vector $\tau^*$ is exactly the parameter vector $\tau^0$ and thus fixed and non-random. However the orientation in which the components may appear depend on the optimizer itself, i.e. this orientation may be random and depends on the orientation in which the global optimum is achieved. In the following we illustrate the construction of $\tau^*$ using a concrete example.

\begin{exmp} Consider the model (\ref{cp}) with $N=3,$ and $\tau^0=(-1,0,1).$ Let Algorithm 1 be initialized with $\check N=7$ and $\check\tau$ such that $\cT=\{0,2,4,6,8\},$ i.e., the second, fourth and sixth components of $\check\tau$ are in a fractional neighborhood of the $\tau^0_1,$ $\tau^0_2$ and $\tau^0_3$ respectively. Now suppose following two cases.
	\begin{itemize}[leftmargin=*]
		\item[a] In the first case, suppose that the global optimum $\h\tau=(\h\tau_1,...,\h\tau_7)^T$ obtained from {\bf Step 1} of Algorithm 1 is such that $\h\tau_1<\h\tau_2,$ $\h\tau_3<\h\tau_4,$ and $\h\tau_5=\h\tau_6.$ Thus, in this case, by the definition of the set $\cT^*$ we have that $\cT^*=\{0,2,4,5,8\}.$ Consequently, by the definition of $\tau^*,$ we have that $\tau^*=(-\iny,-1,-1,0,1,1,1).$
		\item[b] In the second case, suppose that the global optimum $\h\tau=(\h\tau_1,...,\h\tau_7)^T$ obtained from {\bf Step 1} of Algorithm 1 is such that $\h\tau_1<\h\tau_2,$ $\h\tau_3<\h\tau_4,$ and $\h\tau_5<\h\tau_6.$ In this case, we have that $\cT^*=\{0,2,4,6,8\},$ and $\tau^*=(-\iny,-1,-1,0,0,1,1).$
	\end{itemize}	
	Our results to follow shall show that any global optimum $\h\tau,$ must lie in a near optimal neighborhood of the corresponding $\tau^*,$ with high probability. Note that irrespective of the orientation of the components of the vector $\tau^*,$ the subset of finite and distinct components is exactly $\tau^0.$ Correspondingly, we shall obtain the estimates $\tilde\tau$ which are obtained as the subset of distinct and finite components of $\h\tau.$
\end{exmp}

\noi{\bf Condition B (assumptions on model dimensions):}\\~
{\it (i) For $j=1,...,N,$ let $S_j=\big\{k\in\{1,...,p\};\,\,\b_{(j)k}^0\ne 0\big\}$ and $S=\cup_j S_j.$ Then for some $s=s_n\ge 1,$ we assume $|S|\le s.$\\~
	(ii) The model dimensions $s,p,n,$ satisfy $s\log p\big/nl_{\min}^2\to 0.$\\~
	(iii) The choice of $k\in[1,\iny),$ and $l_{\min}$ of Condition A, $\rho^2$ of Condition C, together with $s,n,$ satisfies $(s\rho^2)\big/(l_{\min}^2n^{1/k})\to 0.$}

\vspace{2mm}
Condition B(i) is the usual sparsity assumption on high dimensional models. Conditions B(ii) and B(iii) are restrictions on model dimensions, Condition B(iii) restricts the dimensionality of the model in accordance with the initializing $\check u_n$-neighborhood and the minimum separation $l_{\min}.$ The largest model allowed by Condition B occurs when the initializers in Condition A allows for $k=1,$ $l_{\min}>c_u,$ and Condition C allows for $\rho^2=O(1).$ In this case, we require $s\log p/n\to 0,$ i.e., Condition B(iii) becomes redundant given Condition B(ii). 

\vspace{2mm}
\noi{\bf Condition C (assumptions on change parameters):} {\it If $N\ge 1,$\\~
	(i) Define the minimum jump size $\xi_{\min}:=\min_{j}\|\b_{(j)}-\b_{(j-1)}\|_2,$ $j=1,...,N,$ and assume that it is bounded below, i.e.,  $\xi_{\min}>c_u,$ $c_u>0.$ Also define the maximum jump size $\xi_{\max}:=\max_j\|\b_{(j)}-\b_{(j-1)}\|_2,$ $j=1,...,N,$ and let $\rho=\xi_{\max}/\xi_{\min}$  be the ratio of these jump sizes. Assume that $\rho\sqrt{s\log p/n}\to 0.$ \\~
	(ii) Assume that all unknown change points are sufficiently separated, i.e., $d(\tau_{j-1}^0,\tau_j^0)\ge c_ul_{\min},$ $c_u>0$ for all $j=1,...,N,$ such that, $N\rho\check u_n/l_{\min}\to 0.$}

\vspace{2mm}
This condition is only applicable when at least one change point exists in the model (\ref{cp}). When no change point exists ($N=0$), we can instead define the ratio $\rho=1,$ and all remaining conditions can be ignored. Condition C(ii) is satisfied trivially if only a finite number of change points are assumed in model (\ref{cp}) and the jump ratio $\rho\le c_u,$ i.e., the maximum and minimum jumps are of the same order. Note that Condition C(ii) and Condition A(ii) are controlled by the same sequence $l_{\min},$ essentially assuming the least separation between the initializing change points and that between the true change points are of the same order. This is again not asking for much, since by assumption we have also assumed that $\check N\le c(1\vee N)$ in Condition A(i). In the case of an increasing number of change points, its rate is controlled by C(ii). Note that we do not make any assumptions on the maximum jump size $\xi_{\max},$ instead we control the jump ratio $\rho.$

\vspace{2mm}
\noi{\bf Condition D (assumptions on model distributions):} \\~ {\it
	(i) The vectors $x_i=(x_{i1},...,x_{ip})^T,$ $i=1,..,n,$ are i.i.d subgaussian\footnote{Recall that for $\si>0,$ the random variable $\eta$ is said to be $\si$-subgaussian if, for all $t\in\R,$ $E[\exp(t\eta)] \le \exp(\si^2t^2/2).$ Similarly, a random vector $\xi\in\R^p$ is said to be $\si$-subgaussian if the inner products $\langle\xi, v\rangle$ are $\si$-subgaussian for any $v\in\R^p$ with $\|v\|_2 = 1.$} with mean vector zero, and variance parameter $\si_x^2\le C.$ Furthermore, the covariance matrix $\Sigma:=Ex_ix_i^T$ has bounded eigenvalues, i.e., $0<\ka\le\rm{min eigen}(\Si)<\rm{max eigen}(\Si)\le\phi<\iny.$\\~
	(ii) The model errors $\vep_i$ are i.i.d. subgaussian with mean zero and variance parameter $\si_{\vep}^2\le C.$\\~
	(iii) The change inducing random variables $w_i,$ $i=1,...,n$ are i.i.d, with cdf represented by $\Phi(\tau_a)=P(w_i\le \tau_a),$ $\tau_a\in\bar\R,$ and the distance between any two $\tau_a<\tau_b\in\bar\R$ in the cdf scale represented as $d(\tau_a,\tau_b)=P(\tau_a< w_i\le\tau_b).$\\~
	(iv) The r.v.'s $x_i,w_i,\vep_i$ are independent of each other.}

The subgaussian assumptions in Condition D(i) and D(ii) are now standard in high dimensional linear regression models and are known to accommodate a large class of random designs. In ordinary high dimensional linear regression, these assumptions are used to establish well behaved restricted eigenvalues of the Gram matrix $\sum x_ix_i^T/n$ (\cite{raskutti2010restricted}; \cite{rudelson2012reconstruction}), which are in turn used to derive convergence rates of $\ell_1$ regularized estimators (\cite{bickel2009simultaneous}; and several others). These assumptions shall play a similar role in our high dimensional multiple change point setting. Condition D(iii) on the change inducing variable, allows for both discrete or continuous r.v.'s. Finally, we also note that assumption D(iii) on the change inducing variable $w,$ allows for both continuous or discrete r.v.'s.

From a general perspective of regularized estimation for high dimensional change point linear regression models, the works that are closely related to this article are \cite{kaul2018parameter}, \cite{lee2016lasso}, \cite{lee2018},  and \cite{leonardi2016computationally}. The idea of converting a multiple change point detection problem to a variable selection problem using an $\ell_0$ regularization and an arbitrary segmentation is novel and is completely different from all articles listed above. The articles \cite{lee2016lasso}, \cite{kaul2018parameter}, and \cite{lee2018}, consider a setting with only a single change point. From a technical perspective, the assumptions made on model distributions in this article are similar to those made in \cite{kaul2018parameter} and are comparable to those assumed in \cite{leonardi2016computationally}. A major advantage of the proposed methodology is its ability to detect the `no change' case, i.e., where there are no change points in the model, to the best of our knowledge, the only other article that posses this capability is \cite{kaul2018parameter}, although it is limited to atmost a single change point.  Finally, we also emphasize that for the detection and estimation of multiple change points in regression models, the methodology proposed in this article is much more efficient with a computational complexity of $O(Np^2)+$SA,  in comparison to the existing binary segmentation approach proposed in \cite{leonardi2016computationally}, which scales like $O(nNp^2).$

\section{Main Results}\label{sec:Main}

To present the results of this section we require the following definitions. For any $\tau_a,\tau_b\in\bar\R,$ let
\benr\label{def:zeta}
\z_i(\tau_a,\tau_b)=\begin{cases}
	{\bf 1}[\tau_{a}<w_i\le \tau_b],\quad  {\rm if}\,\, \tau_a<\tau_{b}, \\
	{\bf 1}[\tau_b< w_i\le \tau_a],\quad {\rm if}\,\,\tau_b<\tau_a.
\end{cases}
\eenr
Here it is implicitly understood that $\z_i(\tau_a,\tau_b)=0,$ if $\tau_a=\tau_b.$ Also, define for any $\tau_a,\tau_b\in\bar\R,$ the following set of random indices,
\benr\lel{def:nw}
n^w(\tau_a,\tau_b)=\begin{cases}
	i\in\{1,...,n\};\quad \tau_a< w_i\le \tau_b,\quad {\rm if}\,\, \tau_a<\tau_b,\\
	i\in\{1,...,n\};\quad \tau_b< w_i\le \tau_a,\quad {\rm if}\,\,\tau_b<\tau_a.
\end{cases}
\eenr
Here $n^w(\tau_a,\tau_b)=\emptyset,$ if $\tau_a=\tau_b.$ To develop our results we require control on the cardinality  $|n^w(\tau_a,\tau_b)|$ of the random set $n^w(\tau_a,\tau_b).$ Note that this cardinality is determined by the r.v.'s defined in (\ref{def:zeta}), i.e., $|n^w(\tau_a,\tau_b)|= \sti \z_i(\tau_a,\tau_b).$ In view of this observation, the following lemma provides uniform control (over $\tau_a,\tau_b$ ) on the stochastic quantity $\sti \z_i(\tau_a,\tau_b).$

\begin{lem}\label{lem:indicatorbound} Let $u_n,v_n$ be any non-negative sequences such that $\log (u_n^{-1})=O(\log p)$ and $v_n\ge c_u\log p /n,$ $c_u>0.$ Then under Condition D(iii), we have,
	\benr
	&(i)& \sup_{\substack{\tau_a,\tau_b\in\bar\R;\\d(\tau_a,\tau_b)\le u_n}}\frac{1}{n}\sti \z_i(\tau_a,\tau_b)\le c_u\max\Big\{\frac{\log p}{n}, u_n\Big\},\nn\\
	&(ii)& \inf_{\substack{\tau_a,\tau_b\in\bar\R;\\d(\tau_a,\tau_b)\ge v_n}}\frac{1}{n}\sti \z_i(\tau_a,\tau_b)\ge c_u v_n,\nn
	\eenr
	with probability at least $1- c_1\exp(-c_2\log p),$ for $n$ sufficiently large.
\end{lem}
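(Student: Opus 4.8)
The plan is to recognize $\frac1n\sti\z_i(\tau_a,\tau_b)$ as the empirical measure $P_n(I)$ of the interval $I$ with endpoints $\tau_a,\tau_b$, whose population value is $P(I)=d(\tau_a\wedge\tau_b,\tau_a\vee\tau_b)$; both (i) and (ii) then become one-sided uniform-closeness statements for $P_n$ versus $P$ over the class $\cI$ of all intervals of $\bar\R$. Since $\Phi$ is a (possibly discontinuous) cdf, intervals are indexed monotonically in the cdf scale and $\cI$ is a simple (VC) class, so I would follow the classical discretization route: (a) lay a deterministic quantile grid in the cdf scale whose every cell carries $P$-mass at most a chosen spacing $b_n$; (b) apply a per-cell Chernoff/Bernstein tail to each binomial count $\sti\mathbf{1}[w_i\in\text{cell}]$; (c) union-bound over the polynomially many cells; and (d) sandwich an arbitrary interval between unions of grid cells by monotonicity of $\Phi$, taking the sandwich from the outside for the upper bound (i) and from the inside for the lower bound (ii).

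For (i) I would choose spacing $b_n=m_n:=\max\{\log p/n,\,u_n\}$, giving of order $1/m_n$ cells, each of $P$-mass at most $m_n$. As any interval with $d(\tau_a,\tau_b)\le u_n\le m_n$ spans at most one grid point, it is covered by at most two adjacent cells $J_k$, so $P_n(I)\le 2\max_k P_n(J_k)$. Each count has mean at most $nm_n$ with $nm_n\ge\log p$, so the upper Bernstein tail gives $P_n(J_k)\le c_u m_n$ off an event of probability $\exp(-c_2\log p)$ for a suitably large constant. The union bound over the $\approx 1/m_n$ cells costs a factor $m_n^{-1}$, which is absorbed into $\exp(-c_2\log p)$ because $\log(m_n^{-1})\le\log(u_n^{-1})=O(\log p)$ by hypothesis.

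For (ii) I would take spacing of order $v_n/4$. Any interval $I$ with $d(\tau_a,\tau_b)\ge v_n$ contains the contiguous block $I^-$ of all cells lying inside it; $I^-$ loses at most $2b_n\le v_n/2$ of mass at its two ends, so $P(I^-)\ge v_n/2$ while $P_n(I)\ge P_n(I^-)$. Since $I^-$ is itself determined by two grid endpoints, there are at most $\approx v_n^{-2}$ candidates; each has mean count at least $nv_n/2\ge c_u\log p$, so the lower Chernoff tail yields $P_n(I^-)\ge(1-\delta)P(I^-)\ge c_u v_n$ off an event of probability $\exp(-c_2\log p)$. The prefactor $v_n^{-2}$ is absorbed since $\log(v_n^{-1})\le\log n=O(\log p)$ under $v_n\ge c_u\log p/n$ and $p\gg n$. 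Intersecting the events of (i) and (ii) yields the stated probability $1-c_1\exp(-c_2\log p)$.

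The per-cell binomial tails are routine; the two delicate points are the bookkeeping that keeps the union-bound prefactor subexponential in $\log p$ — which is exactly where $\log(u_n^{-1})=O(\log p)$, $v_n\ge c_u\log p/n$, and the regime $p\gg n$ (so $\log n=O(\log p)$) get consumed — and the construction of the quantile grid when $\Phi$ carries atoms. The atomic case is what I would treat most carefully: an atom of mass exceeding $b_n$ cannot be split into $b_n$-cells, but any interval containing such an atom already has $d(\tau_a,\tau_b)>u_n$ and is thus excluded from the supremum in (i), while for (ii) such atoms only inflate $P_n(I)$ and hence help; isolating each large atom as its own cell keeps the total cell count at order $1/b_n$ in both parts, so the above counts are unaffected.
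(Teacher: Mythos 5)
Your proposal is correct and follows essentially the same route as the paper's own proof: a discretization of $\bar\R$ into $O(1/u_n)$ (resp.\ $O(1/v_n)$) cells in the cdf scale, per-cell Chernoff-type binomial tail bounds, a monotonicity sandwich to pass from grid cells to arbitrary intervals, and a union bound whose polynomial prefactor is absorbed via $\log(u_n^{-1})=O(\log p)$ and $v_n\ge c_u\log p/n$. The differences are cosmetic — the paper anchors at cell centers and invokes Maurer's lower-tail inequality where you use a generic lower Chernoff bound, and covers by balls rather than unions of cells — while your explicit handling of atoms of $\Phi$ is if anything more careful than the paper's, which implicitly assumes the equal-mass partition exists.
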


An application of Lemma \ref{lem:indicatorbound} leads to uniform control (over $\tau_a,$ $\tau_b$) of other stochastic quantities such as $\big\|\sum_{i\in n^w} \vep_ix_i^T\big\|_{\iny},$ among others, which are necessary for the arguments to follow. These bounds are provided in Lemma \ref{lem:crossbounds} in supplementary materials of this article. More simplistic versions of Lemma \ref{lem:indicatorbound} have also been used by \cite{kaul2017structural} in the context of graphical models with missing data, and in \cite{kaul2018parameter} in the context of high dimensional change point regression with a single change point.

To proceed further, recall that {\bf Step 1} of Algorithm 1, utilizes estimates of regression coefficients from {\bf Step 0}, which are based on misspecified initial change points. Thus, in order to obtain variable selection and estimation results regarding the change point estimates of {\bf Step 1}, we first need to analyze the rates of convergence of regression estimates of {\bf Step 0}. This analysis in turn requires restricted eigenvalue conditions on the Gram matrix $\sum_{i} x_ix_i^T/n,$ which is described in the following.

For any deterministic set $S\subseteq \{1,2,...,p\},$ define the collection $\A$ as,
\benr\label{def:seta}
\A=\Big\{\delta\in\R^p;\, \|\delta_{S^c}\|_1\le 3\|\delta_S\|_1\Big\}.
\eenr
Then, \cite{bickel2009simultaneous} define the lower restricted eigenvalue condition as,
\benr\label{def:reb}
\inf_{\delta\in \A} \frac{1}{n}\sti \delta^T x_ix_i^T\delta \ge c_u\ka\|\delta\|_2^2,\quad\rm{for\,\,some\,\,constant}\,\, \ka>0.
\eenr
Our analysis shall require uniform versions of the condition (\ref{def:reb}), these are developed in Lemma \ref{lem:restrictedeigen}. Additionally, we shall also require the set $\A_2$ defined below, which is a slightly different version of the set $\cA$ defined in (\ref{def:seta}).
\benr\label{def:seta2}
\A_2=\Big\{\delta\in\R^p; \|\delta_{S^c}\|_1\le 3\|\delta_S\|_1+ c_u\xi_{\max}\sqrt{s}\Big\}.
\eenr
Finally, we also mention that other weaker versions of Condition (\ref{def:reb}) are also available in the literature, such as the compatibility condition of \cite{buhlmann2011statistics}, and the $\ell_q$ sensitivity of \cite{gautier2011high}. In the setup of common random designs, it is also well established that condition (\r{def:reb}) holds with probability converging to $1,$ see for e.g. \cite{raskutti2010restricted},  \cite{rudelson2012reconstruction} for Gaussian designs and \cite{loh2012} for sub-Gaussian designs. The following lemma provides the plausibility of the uniform restricted eigenvalue conditions required in our analysis.

\begin{lem}\label{lem:restrictedeigen}
	Let $\A$ and $\A_2$ be as given in (\ref{def:seta}) and (\ref{def:seta2}) respectively, for $S$ as defined in Condition B. Let $u_n, v_n$ be non-negative sequences such that $\log (u_n^{-1})=O(\log p)$ and $v_n\ge c_us\log p\big/n,$ for a suitably chosen constant $c_u>0.$ Then under Conditions B(i), B(ii) and D, and for $n$ sufficiently large,  the following restricted eigenvalue conditions hold with probability at least $1-c_1\exp(-c_2\log p),$
	\benr
	&(i)& \inf_{\substack{\tau_a,\tau_b\in\bar\R;\\d(\tau_a,\tau_b)\ge v_n}}\inf_{\delta\in \A} \frac{1}{n}\sum_{i\in n^w(\tau_a,\tau_b)}\delta^T x_ix_i^T \delta \ge c_uc_m v_n\|\delta\|_2^2,\nn\\
	&(ii)&\inf_{\substack{\tau_a,\tau_b\in\bar\R;\\ d(\tau_a,\tau_b)\ge v_n}}\inf_{\delta\in\A_2}\frac{1}{n}\sum_{i\in n^w(\tau_a,\tau_b)}\delta^Tx_ix_i^T\delta \ge c_uc_m v_n\|\delta\|_2^2 - c_u c_m \frac{\xi_{\max}^2s\log p}{n},\nn\\
	&(iii)&\sup_{\substack{\tau_a,\tau_b\in\bar\R;\\d(\tau_a,\tau_b)\le u_n}}\sup_{\delta\in \A}\frac{1}{n}\sum_{i\in n_w}\delta^T x_ix_i^T \delta\le c_uc_m\|\delta\|_2^2 \max\Big\{\frac{s\log p}{n}, u_n\Big\}.\nn
	\eenr
\end{lem}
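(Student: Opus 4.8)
The plan is to prove the three uniform restricted eigenvalue bounds by combining the pointwise restricted eigenvalue condition~(\ref{def:reb}) (which holds with high probability for subgaussian designs by Condition D, via the cited results of \cite{raskutti2010restricted} and \cite{loh2012}) with the uniform control on the number of indices $|n^w(\tau_a,\tau_b)|=\sum_{i=1}^n\z_i(\tau_a,\tau_b)$ supplied by Lemma~\ref{lem:indicatorbound}. The essential difficulty, and the reason a naive union bound cannot work, is that the inner infima/suprema range over the continuum of pairs $(\tau_a,\tau_b)\in\bar\R^2$. So the core of the argument is a \emph{discretization/monotonicity} step: since the quadratic form $\frac{1}{n}\sum_{i\in n^w(\tau_a,\tau_b)}\delta^Tx_ix_i^T\delta$ depends on $(\tau_a,\tau_b)$ only through which sample points $w_i$ fall in the interval, it is a step function of $(\tau_a,\tau_b)$ with at most $O(n^2)$ distinct values; hence the supremum/infimum over the continuum equals one over a finite grid of size polynomial in $n$, and a union bound costs only an extra $\log n=O(\log p)$ factor that is absorbed into the exponent $c_2\log p$.

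For part~(i) I would first fix a pair $(\tau_a,\tau_b)$ with $d(\tau_a,\tau_b)\ge v_n$ and a fixed $\delta\in\A$. Write the restricted sum as $\frac{|n^w|}{n}\cdot\big(\frac{1}{|n^w|}\sum_{i\in n^w}\delta^Tx_ix_i^T\delta\big)$. Conditionally on the index set $n^w$, the inner average is a restricted eigenvalue of the subsampled Gram matrix over a set of $|n^w|$ i.i.d.\ subgaussian vectors, which is bounded below by $c_u\ka\|\delta\|_2^2$ with high probability whenever $|n^w|$ is large enough---and Lemma~\ref{lem:indicatorbound}(ii) guarantees $|n^w|\ge c_u v_n n$ uniformly, with $v_n\ge c_us\log p/n$ ensuring the subsample is large enough (of order $s\log p$) for the restricted eigenvalue to concentrate. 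Multiplying the lower bound $c_u\ka$ on the inner average by the lower bound $c_uv_n$ on $|n^w|/n$ gives the claimed $c_uc_m v_n\|\delta\|_2^2$. The uniformity over $\delta\in\A$ is inherited from the pointwise restricted eigenvalue statement (which is itself uniform over the cone), and the uniformity over $(\tau_a,\tau_b)$ comes from the discretization argument above combined with the monotonicity that the index set $n^w(\tau_a,\tau_b)$ only grows as the interval widens.

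Part~(iii) is the mirror image: here $d(\tau_a,\tau_b)\le u_n$, so Lemma~\ref{lem:indicatorbound}(i) bounds $|n^w|/n$ above by $c_u\max\{\log p/n,\,u_n\}$, and I would pair this with a uniform \emph{upper} bound on the subsampled quadratic form. The upper bound is easier since it needs no lower restriction on $|n^w|$: over the cone $\A$ one has $\|\delta\|_1\le 4\|\delta_S\|_1\le 4\sqrt{s}\|\delta\|_2$, so $\frac{1}{n}\sum_{i\in n^w}\delta^Tx_ix_i^T\delta\le \|\delta\|_2^2\cdot\max\text{-eigen of the subsampled matrix restricted to the cone}$, and a standard covering/concentration bound for subgaussian quadratic forms (uniform over the same polynomial grid of intervals) yields the factor $\max\{s\log p/n,\,u_n\}$.

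Part~(ii) is where the main obstacle lies, because the enlarged cone $\A_2$ in~(\ref{def:seta2}) relaxes the constraint by the additive term $c_u\xi_{\max}\sqrt{s}$, so $\delta$ need not be approximately $s$-sparse and the pointwise restricted eigenvalue over $\A$ does not directly apply. The standard device is to split $\A_2$ by comparing $\|\delta_{S^c}\|_1$ against the two terms: if $\|\delta_{S^c}\|_1\le 3\|\delta_S\|_1$ then $\delta\in\A$ and part~(i) applies verbatim; in the complementary regime $3\|\delta_S\|_1<\|\delta_{S^c}\|_1\le 3\|\delta_S\|_1+c_u\xi_{\max}\sqrt{s}$, one bounds the ``excess'' contribution. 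I would control the shortfall using the uniform upper bound of part~(iii) applied to the excess mass of size $O(\xi_{\max}\sqrt{s})$, which produces precisely the subtracted correction term $c_uc_m\xi_{\max}^2 s\log p/n$ on the right-hand side. I expect the delicate bookkeeping to be in tracking this decomposition cleanly while keeping all the high-probability events simultaneously valid under a single union bound over the grid of intervals; the subgaussian concentration and the discretization machinery are otherwise routine given Lemma~\ref{lem:indicatorbound} and the pointwise restricted eigenvalue condition.
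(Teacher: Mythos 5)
Your proposal follows the same architecture as the paper's proof for parts (i) and (iii): factor the restricted sum as $\frac{|n^w|}{n}\cdot\frac{1}{|n^w|}\sum_{i\in n^w}\delta^Tx_ix_i^T\delta$, control $|n^w|/n$ uniformly over $(\tau_a,\tau_b)$ by Lemma \ref{lem:indicatorbound}, and control the normalized subsampled quadratic form by a subgaussian restricted-eigenvalue deviation bound. One difference is that the paper does not invoke a bound of the form ``$\ge c_u\ka\|\delta\|_2^2$ on the cone once the subsample is large enough''; instead it uses the Loh--Wainwright inequality (Lemma \ref{lem:lw}), valid for \emph{all} $\delta\in\R^p$, namely $\frac{1}{|n^w|}\sum_{i\in n^w}\delta^Tx_ix_i^T\delta\ge \ka\|\delta\|_2^2-c_uc_m\frac{\log p}{|n^w|}\|\delta\|_1^2$, established unconditionally by conditioning on $w$ (the bound is free of $w$) and then taking expectations; the cone enters only through the algebraic step $\|\delta\|_1^2\le c_us\|\delta\|_2^2$ on $\A$, and the residual $c_uc_m\frac{s\log p}{n}\|\delta\|_2^2$ is absorbed because $v_n\ge c_us\log p/n$. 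Your explicit discretization of $(\tau_a,\tau_b)$ into the $O(n^2)$ distinct index sets is a reasonable (indeed, more careful) way to justify the uniformity that the paper leaves largely implicit at this step.

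Where your plan goes astray is part (ii). The enlarged cone $\A_2$ does not require a case split or an ``excess mass'' decomposition, and the tool you propose for the second regime --- the uniform upper bound of part (iii) --- is the wrong one: part (iii) controls quadratic forms over \emph{short} intervals $d(\tau_a,\tau_b)\le u_n$, whereas part (ii) concerns long intervals $d(\tau_a,\tau_b)\ge v_n$, so it cannot be ``applied to the excess.'' Moreover, splitting $\delta$ into a cone part and an excess part does not interact cleanly with the quadratic form because of cross terms. The paper's route is both simpler and uniform over all of $\A_2$: for any $\delta\in\A_2$ one has $\|\delta\|_1\le 4\|\delta_S\|_1+c_u\xi_{\max}\sqrt{s}\le 4\sqrt{s}\|\delta\|_2+c_u\xi_{\max}\sqrt{s}$, hence $\|\delta\|_1^2\le c_us(\|\delta\|_2^2+\xi_{\max}^2)$, and substituting this into the \emph{same} Loh--Wainwright inequality used in part (i) produces, after multiplying by $|n^w|/n$, exactly the subtracted correction $c_uc_m\xi_{\max}^2s\log p/n$ while the $\|\delta\|_2^2$ piece is again absorbed by $v_n\ge c_us\log p/n$. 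Your end target for part (ii) is right, but the mechanism you sketch would need to be replaced by this direct $\ell_1$-norm bound on $\A_2$ for the argument to close.
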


In the following, for any positive number $r>0$ and any $\tau_a\in\R,$ define the interval $\cB(\tau_a,r)=\big\{\tau\in\R;\, d(\tau_a,\tau)\le r\big\}.$ The rates of the initial regression coefficient estimates of {\bf Step 0} of Algorithm 1 shall be a consequence of the following general result.

\begin{thm}\label{thm:initialrate}
	Suppose Condition B(i), B(ii), C(ii) and D. Let $u_n$ be any non-negative sequence satisfying $\log (u_n^{-1})=O(\log p)$ and let $Q^*$ be as given in (\ref{def:qstar}). For any $\tau_a,\tau_b\in\bar\R,$ let $\h\al\in\R^{p}$ be the solution to the Lasso optimization
	\benr
	\h\al=\argmin_{\al\in\R^p}\Big\{\frac{1}{n}Q^*(\al,\tau_a,\tau_b)+\la_0\|\al\|_1 \Big\}\nn.
	\eenr
	Additionally, for any fixed $j=1,...,N+1,$ let $\cC_j=\cC_j^1\cup\cC_j^2\cup\cC_j^3\cup\cC^4_j,$ where
	\benr
	\cC_j^1&=&\Big\{\tau_a,\tau_b\in\bar\R;\,\, d(\tau_a,\tau_b)>l_{\min}; \tau_a\in\cB(\tau_{j-1}^0,u_n), \tau_b\in\cB(\tau_j^0,u_n)\Big\},\nn\\
	\cC_j^2&=&\Big\{\tau_a,\tau_b\in\bar\R;\,\, d(\tau_a,\tau_b)>l_{\min}; \tau_a\ge\tau_{j-1}^0, \tau_b\in\cB(\tau_j^0,u_n)\Big\},\nn\\
	\cC_j^3&=&\Big\{\tau_a,\tau_b\in\bar\R;\,\, d(\tau_a,\tau_b)>l_{\min}; \tau_a\in\cB(\tau_{j-1}^0,u_n), \tau_b\le \tau_j^0\Big\},\nn\\
	\cC_j^4&=&\Big\{\tau_a,\tau_b\in\bar\R;\,\, d(\tau_a,\tau_b)>l_{\min}; \tau_a\ge\tau_{j-1}^0, \tau_b\le\tau_{j}^0,\Big\}.\nn
	\eenr
	Then choosing $\la_0=c_uc_m\max\big\{\sqrt{\log p/n}, \xi_{\max}u_n\big\},$ for $n$ sufficiently large we have for $j=1,...,N+1,$
	\benr
	\,\,\sup_{\tau_a,\tau_b\in\cC_j}\|\h\al-\b^0_{(j-1)}\|_q\le c_uc_m s^{\frac{1}{q}} \max\Big\{\sqrt{\frac{\log p}{n}},\,\, \xi_{\max}u_n\Big\}\Big/l_{\min},\quad q=1,2,\nn
	\eenr
	with probability at least $1-c_1\exp(-c_2\log p).$
\end{thm}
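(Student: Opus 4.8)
The plan is to run a uniform version of the standard Lasso oracle inequality, the essential twist being that on a segment $(\tau_a,\tau_b]$ drawn from $\cC_j$ the response is only \emph{approximately} generated by the target coefficient $\b^0_{(j-1)}$: a small, endpoint-dependent set of observations is contaminated by a neighbouring coefficient $\b^0_{(j-2)}$ or $\b^0_{(j)}$ that leaks in through the $u_n$-misspecification of an endpoint. Writing $\delta=\h\al-\b^0_{(j-1)}$ and $r_i=y_i-x_i^T\b^0_{(j-1)}$, optimality of $\h\al$ gives the basic inequality
\[
\frac1n\sum_{i\in n^w}(x_i^T\delta)^2\ \le\ \frac2n\sum_{i\in n^w} r_i\,x_i^T\delta+\la_0\big(\|\b^0_{(j-1)}\|_1-\|\h\al\|_1\big).
\]
Everything must hold \emph{simultaneously} over all $(\tau_a,\tau_b)\in\cC_j$, and this is exactly what the uniform Lemmas \ref{lem:indicatorbound} and \ref{lem:restrictedeigen}, together with the cross-term bound of Lemma \ref{lem:crossbounds}, are built to deliver; the single high-probability event on which they all hold carries the supremum through to the conclusion.

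The core of the work is the cross term, which I would split as $\frac2n\sum_{i\in n^w}\vep_i x_i^T\delta+\frac2n\sum_{\mathrm{contam}}(x_i^T\Delta_i)(x_i^T\delta)$, where on a contaminated observation $\Delta_i$ equals $\b^0_{(j-2)}-\b^0_{(j-1)}$ or $\b^0_{(j)}-\b^0_{(j-1)}$ (so $\|\Delta_i\|_2\le\xi_{\max}$ and $\Delta_i$ is $S$-sparse, hence lies in $\A$) and is zero elsewhere. The noise part is bounded by $2\big\|\frac1n\sum_{i\in n^w}\vep_i x_i\big\|_\iny\|\delta\|_1$, which Lemma \ref{lem:crossbounds} controls at scale $\sqrt{\log p/n}$; this pins down the first branch of $\la_0$. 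For the bias part I would use Cauchy--Schwarz in $i$ to obtain $2\sqrt{B}\,\sqrt{V}$, with $V=\frac1n\sum_{i\in n^w}(x_i^T\delta)^2$ the empirical quadratic form and $B=\frac1n\sum_{\mathrm{contam}}(x_i^T\Delta_i)^2$. Because the four geometries $\cC_j^1,\dots,\cC_j^4$ confine contamination to within $d$-distance $u_n$ of an endpoint, Lemma \ref{lem:indicatorbound}(i) caps the contaminated fraction by $c_u\max\{\log p/n,u_n\}$, and a sub-Gaussian concentration of $\frac1n\sum_{\mathrm{contam}}(x_i^T\Delta_i)^2$ then yields $B\le c_uc_m\xi_{\max}^2\max\{\log p/n,u_n\}$. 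Using $2\sqrt{B}\sqrt{V}\le\tfrac12 V+2B$ absorbs half the curvature, and the usual cancellation $\|\b^0_{(j-1)}\|_1-\|\h\al\|_1\le\|\delta_S\|_1-\|\delta_{S^c}\|_1$ then forces $\|\delta_{S^c}\|_1\le 3\|\delta_S\|_1+4B/\la_0$; with $\la_0$ carrying its second branch $\xi_{\max}u_n$, the remainder $4B/\la_0$ is of order $\xi_{\max}\sqrt s$, placing $\delta$ precisely in the enlarged cone $\A_2$ of (\ref{def:seta2}).

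With $\delta\in\A_2$ and $\tfrac12 V$ retained on the left, I would invoke the curvature bound Lemma \ref{lem:restrictedeigen}(ii) with $v_n=l_{\min}$ -- legitimate since every pair in $\cC_j$ obeys $d(\tau_a,\tau_b)>l_{\min}$ and $l_{\min}\gg s\log p/n$ by Condition B(ii) -- to get $V\ge c_uc_m l_{\min}\|\delta\|_2^2-c_uc_m\xi_{\max}^2 s\log p/n$. Substituting this and $\la_0\|\delta_S\|_1\le\la_0\sqrt s\,\|\delta\|_2$ back into the reduced basic inequality produces a scalar quadratic inequality of the shape $l_{\min}\|\delta\|_2^2\le c_uc_m\big(\la_0\sqrt s\,\|\delta\|_2+\xi_{\max}^2 s\log p/n\big)$; solving it, and verifying through Conditions B(ii)--(iii) and C that the additive slack $\xi_{\max}^2 s\log p/n$ is of smaller order than $l_{\min}$ times the squared target rate, yields $\|\delta\|_2\le c_uc_m\sqrt s\max\{\sqrt{\log p/n},\xi_{\max}u_n\}/l_{\min}$, which is the $q=2$ claim. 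The $q=1$ claim then follows by feeding the $\ell_2$ bound into the cone relation $\|\delta\|_1\le 4\|\delta_S\|_1+c_u\xi_{\max}\sqrt s\le c_u\sqrt s\,\|\delta\|_2+c_u\xi_{\max}\sqrt s$, the residual additive piece being absorbed under the same conditions.

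I expect the main obstacle to be the uniform treatment of the contamination bias rather than the curvature step, which is routine once the cone is identified. Turning the pointwise ``small contaminated fraction'' into a genuinely \emph{uniform-over-$\cC_j$} bound on $B$, while tracking which of the four endpoint configurations is active and guaranteeing that each contributes at most one jump of size $\xi_{\max}$, is the delicate part; it is also what dictates the precise form of $\A_2$ and of the second branch $\xi_{\max}u_n$ in $\la_0$. The secondary bookkeeping hurdle is showing that the restricted-eigenvalue slack $\xi_{\max}^2 s\log p/n$ carried by Lemma \ref{lem:restrictedeigen}(ii) is negligible against $l_{\min}\times(\text{rate})^2$, which is exactly where Conditions B(ii)--(iii) and C are consumed.
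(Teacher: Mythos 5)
Your overall architecture --- the basic inequality from Lasso optimality, the split of the cross term into noise plus endpoint contamination, uniform control via Lemmas \ref{lem:indicatorbound}, \ref{lem:restrictedeigen} and \ref{lem:crossbounds}, and a restricted-eigenvalue step on a segment of $d$-length at least $l_{\min}$ --- matches the paper's. The step that does not work is your treatment of the contamination term by Cauchy--Schwarz in $i$. Writing the bias contribution as $2\sqrt{B}\sqrt{V}$ with $B=\frac1n\sum_{{\rm contam}}(x_i^T\Delta_i)^2\le c_uc_m\xi_{\max}^2\max\{\log p/n,\,u_n\}$ and then absorbing via $2\sqrt{B}\sqrt{V}\le\tfrac12V+2B$ leaves an \emph{additive} remainder $2B$ that is not multiplied by any norm of $\delta$ and therefore cannot be folded into the $\la_0\|\delta\|_1$ budget. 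After the curvature step it survives as $\|\delta\|_2\le c\big(\sqrt{s}\,\la_0/l_{\min}+\sqrt{B/l_{\min}}\big)$, and the second term, of order $\xi_{\max}\sqrt{u_n/l_{\min}}$, dominates the claimed rate $\sqrt{s}\,\xi_{\max}u_n/l_{\min}$ whenever $su_n\ll l_{\min}$ --- which is the typical regime here, since $u_n$ is a shrinking misspecification neighbourhood while $l_{\min}$ is the (possibly constant) minimum separation. (You in fact silently drop $2B$ from your final quadratic inequality, retaining only the RE slack $\xi_{\max}^2s\log p/n$; but $2B\asymp\xi_{\max}^2u_n$ can be much larger.) The same issue recurs in your $\ell_1$ step, where the $\A_2$-cone leaves a residual $c_u\xi_{\max}\sqrt{s}$ that is not dominated by $s\la_0/l_{\min}$ under the stated conditions.

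The paper avoids all of this by bounding the contamination in H\"older form rather than Cauchy--Schwarz form: $\big|\frac2n\sum_{{\rm contam}}(x_i^T\Delta_i)(x_i^T\delta)\big|\le 2\big\|\frac1n\sum_{{\rm contam}}\Delta_i^Tx_ix_i^T\big\|_{\iny}\|\delta\|_1\le c_uc_m\xi_{\max}\max\{\log p/n,\,u_n\}\|\delta\|_1$ by Lemma \ref{lem:crossbounds}(i). The contamination then sits entirely inside the $\la_0\|\delta\|_1$ budget --- this is precisely why $\la_0$ carries the second branch $\xi_{\max}u_n$ --- so the error enters linearly in $u_n$ rather than as $\sqrt{u_n}$, the standard cone $\A$ (factor $3$, no additive offset) is preserved, Lemma \ref{lem:restrictedeigen}(i) applies with $v_n=l_{\min}$ and no slack, and the resulting inequality $c_uc_ml_{\min}\|\delta\|_2^2\le\sqrt{s}\,\la_0\|\delta\|_2$ yields the rate directly; the $\ell_1$ bound then follows from $\|\delta\|_1\le c_u\sqrt{s}\|\delta\|_2$ on $\A$. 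The enlarged cone $\A_2$ and Lemma \ref{lem:restrictedeigen}(ii) are reserved in the paper for a different purpose (the lower bounds in Lemma \ref{lem:ulowersupportargument}); they are neither needed nor helpful here.
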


Theorem \ref{thm:initialrate} can be used to obtain the rates of convergence of $\h\al_{(j)},$ $j=0,...,\check N,$ obtained from {\bf Step 0} of Algorithm 1 and Algorithm 2. To state these rates explicitly we require the following notation. Let $\cT$ be as defined in Condition A, and define for each $j\in\cT^c,$
\benr\label{def:kj}
k_j=\min\Big\{k;\,\, 1\le k\le N+1;\,\, \tau^0_{k-1}<\check \tau_j\le \tau^0_{k}\Big\}.
\eenr
Simply stated, the index $k_j$ is the first index $k$ between $1,..,N+1,$ such that $\tau_j$ lies between $\tau^0_{k-1}$ and $\tau^0_k.$ This index $k_j$ identifies the regression coefficient vector $\b^0_{(k_j-1)}$ with its approximation $\h\al_{(j-1)}$ for each $j\in\cT^c,$ this notation is illustrated in Example \ref{eg:initialrate}. Under this notation, the following corollary provides the rates of convergence of the initial regression estimates.

\begin{cor}\label{cor:intialrate}
	Let $\check N$ and $\check\tau\in\R^{\check N}$ be any initializers satisfying Condition A and assume the conditions of Theorem \ref{thm:initialrate}. Also, let $\h\al_{(j)},$ $j=0,...,N$ be the estimates obtained from {\bf Step 0} of Algorithm 1 and let ${k_j}$ be as defined in (\ref{def:kj}). Then, upon choosing $\la_0=c_uc_m\max\big\{\sqrt{\log p/n}, \xi_{\max}{\check u}_n\big\},$ $q=1,2,$ and $n$ sufficiently large, we have the following.\\~
	(i) For each fixed $j=1,...,N+1,$
	\benr
	\|\h\al_{(m_j-1)}-\b^0_{(j-1)}\|_q\le c_uc_m s^{\frac{1}{q}} \max\Big\{\sqrt{\frac{\log p}{n}},\,\, \xi_{\max}{\check u}_n\Big\}\Big/l_{\min},\nn
	\eenr	
	with probability at least $1-c_1\exp(-c_2\log p).$ \\
	(ii) For each fixed $j\in\cT^c,$
	\benr
	\|\h\al_{(j-1)}-\b^0_{(k_j-1)}\|_q\le c_uc_m s^{\frac{1}{q}} \max\Big\{\sqrt{\frac{\log p}{n}},\,\,\xi_{\max}{\check u}_n\Big\}\Big/l_{\min},\nn
	\eenr
	with probability at least $1-c_1\exp(-c_2\log p).$
\end{cor}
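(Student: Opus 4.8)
The plan is to read the corollary directly off Theorem \ref{thm:initialrate}: each initial estimate produced in {\bf Step 0} is precisely a Lasso fit $\frac{1}{n}Q^*(\cdot,\check\tau_a,\check\tau_b)+\la_0\|\cdot\|_1$ on a single segment of the initializing partition, so it suffices to identify the true regime that each such segment targets, to verify that the corresponding endpoint pair $(\check\tau_a,\check\tau_b)$ lies in the appropriate collection $\cC_{(\cdot)}$, and then to invoke the uniform bound of Theorem \ref{thm:initialrate} with $u_n=\check u_n$ and the stated $\la_0$. Since the conclusion is asserted for each fixed index, and Theorem \ref{thm:initialrate} already supplies a single high-probability bound that is uniform over all of $\cC_j$ simultaneously for $j=1,\dots,N+1$, no further union bound over indices is needed and the probability $1-c_1\exp(-c_2\log p)$ is inherited directly.

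For part (i), the estimate $\h\al_{(m_j-1)}$ is the Lasso on the segment whose right endpoint is $\check\tau_{m_j}$. By Condition A(iii), $d(\check\tau_{m_j},\tau^0_j)\le\check u_n$, i.e. $\check\tau_{m_j}\in\cB(\tau^0_j,\check u_n)$, while Condition A(ii) supplies the separation requirement $d(\check\tau_{m_j-1},\check\tau_{m_j})>l_{\min}$. For the left endpoint I would use monotonicity of the partition together with $m_j-1\ge m_{j-1}$ to get $\check\tau_{m_j-1}\ge\check\tau_{m_{j-1}}$, and then Condition A(iii) at index $j-1$ to conclude that $\check\tau_{m_j-1}$ lies no more than $\check u_n$ (in the $d$-scale) below $\tau^0_{j-1}$. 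This places the pair in $\cC_j^1\cup\cC_j^2\subseteq\cC_j$ according to whether $\check\tau_{m_j-1}$ sits to the left or to the right of $\tau^0_{j-1}$, and Theorem \ref{thm:initialrate} for regime $j$ then delivers the bound for $\b^0_{(j-1)}$.

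For part (ii), I would first set up the block structure induced by $\cT$: the aligned indices $m_0<m_1<\cdots<m_{N+1}$ partition $\{0,\dots,\check N+1\}$ into consecutive blocks, and since the only true change points inside $(\check\tau_{m_{l-1}},\check\tau_{m_l}]$ are $\tau^0_{l-1},\tau^0_l$ (up to the $\check u_n$ slack at the block endpoints), every non-aligned initializer $\check\tau_j$ interior to such a block lies in regime $l$, so that $k_j=l$. The estimate $\h\al_{(j-1)}$ is the Lasso on $(\check\tau_{j-1},\check\tau_j]$, whose right endpoint obeys $\check\tau_j\le\tau^0_{k_j}$. For the left endpoint, $j-1\ge m_{k_j-1}$ gives $\check\tau_{j-1}\ge\check\tau_{m_{k_j-1}}$, which by Condition A(iii) is no more than $\check u_n$ below $\tau^0_{k_j-1}$; hence the pair lies in $\cC_{k_j}^3\cup\cC_{k_j}^4\subseteq\cC_{k_j}$, and Theorem \ref{thm:initialrate} applied to regime $k_j$ yields the approximation of $\b^0_{(k_j-1)}$.

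The only genuinely delicate point, which I expect to be the main obstacle, is the geometric bookkeeping that classifies each segment into one of the four sets $\cC^1,\dots,\cC^4$, and in particular rules out that a segment straddles an interior true change point. This is resolved entirely by Condition A: the ordering and minimal separation $l_{\min}$ of the $\check\tau$'s (A(i)--(ii)) forbid two true change points from lying inside a single initializing segment, while the fractional-neighborhood alignment $d(\check\tau_{m_l},\tau^0_l)\le\check u_n$ (A(iii)) confines any boundary crossing to a $\check u_n$-neighborhood, which is exactly the slack that $\cC^1$ and $\cC^3$ are constructed to absorb. Once membership in the correct $\cC$ set is established, all of the probabilistic content---the restricted-eigenvalue control of Lemma \ref{lem:restrictedeigen}, the noise cross-term bounds, and the resulting rate---has already been discharged within Theorem \ref{thm:initialrate}, so the corollary follows.
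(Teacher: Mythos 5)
Your proposal is correct and follows essentially the same route as the paper: the corollary is read off Theorem \ref{thm:initialrate} by classifying each initializing segment pair, namely $(\check\tau_{m_j-1},\check\tau_{m_j})$ into $\cC_j^1\cup\cC_j^2$ for part (i) and $(\check\tau_{j-1},\check\tau_j)$ with $j\in\cT^c$ into $\cC_{k_j}^3\cup\cC_{k_j}^4$ for part (ii), using Condition A for the separation and $\check u_n$-alignment. Your geometric bookkeeping for the endpoint classification is in fact somewhat more explicit than the paper's own (quite terse) argument, but the substance is identical.
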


\begin{exmp}\label{eg:initialrate} {\rm
		Suppose $N=2,$ $\check N=4$ and the chosen initial $\check\tau\in\R^{4}$ is in the orientation illustrated in Figure \ref{fig:initialorientation}.
		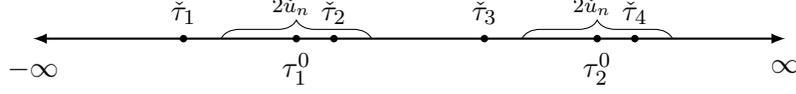
\begin{figure}[H]
			\centering{
				\begin{tikzpicture}
				\draw[black,thick,latex-latex] (0,0) -- (10,0)
				node[pos=0,label=below:\textcolor{black}{$-\iny$}]{}
				node[pos=0.35,mynode,fill=black,label=below:\textcolor{black}{$\tau_1^0$}]{}
				node[pos=0.75,mynode,fill=black,label=below:\textcolor{black}{$\tau_2^0$}]{}
				node[pos=0.20,mynode,fill=black,label=above:\textcolor{black}{$\check\tau_1$}]{}
				node[pos=0.40,mynode,fill=black,label=above:\textcolor{black}{$\check\tau_2$}]{}
				node[pos=0.60,mynode,fill=black,label=above:\textcolor{black}{$\check\tau_3$}]{}
				node[pos=0.80,mynode,fill=black,label=above:\textcolor{black}{$\check\tau_4$}]{}
				node[pos=1,label=below:\textcolor{black}{$\iny$}]{};
				\draw[decorate,decoration={brace,amplitude=7pt}] (2.5,0) --node[above left=5pt and -9pt]{$\scriptstyle{2{\check u}_n}$} (4.5,0);
				\draw[decorate,decoration={brace,amplitude=7pt}] (6.5,0) --node[above left=5pt and -9pt]{$\scriptstyle{2{\check u}_n}$} (8.5,0);
				\end{tikzpicture}
				\caption{\footnotesize{A possible orientation of initializers $\check\tau\in\R^{4},$ where $N=2.$}}
				\label{fig:initialorientation}}
		\end{figure}
		
		\vspace{-4mm}
		In this orientation of $\check\tau$, we have $\cT=\{m_0,m_1,m_2,m_3\}=\{0,2,4,5\},$ and $\{k_1,k_3\}=\{1,2\}.$ Consequently, by Corollary \ref{cor:intialrate}, the initial regression estimates $\h\al_{(0)},\h\al_{(1)},....,\h\al_{(4)}$ will be such that $\h\al_{(0)},$ $\h\al_{(2)}$ will approximate $\b^0_{(0)},$ $\b^0_{(1)}$ respectively and $\h\al_{(1)},$ $\h\al_{(3)},\h\al_{(4)}$ will approximate $\b^0_{(0)},\b^0_{(1)}$ and $\b^0_{(2)}$ respectively.}
\end{exmp}

We now turn our attention to the main goal of this article, i.e., establishing variable selection and estimation results of change point estimates obtained from {\bf Step 1} of Algorithm 1. To achieve this, we require the following series of definitions. Let $\tau^*$ be as defined in (\ref{def:taustar}), and for any $\check N\ge 1,$ $\tau\in \R^{\check N}$ and $\al\in\R^{p(\check N+1)},$ define,
\benr\label{def:calliustar}
\hspace{1cm}\cU^*(\check N,\al,\tau)&=&Q(\check N,\h\al,\tau)-Q(\check N,\h\al,\tau^*),\\
\cU(\check N,\h\al,\tau)&=&\cU^*(\check N,\h\al,\tau)+\mu\sum_{j=1}^{\check N}\Big(\|d(\tau_j,\tau_{j-1})\|_0-\|d(\tau_j^*,\tau_{j-1}^*)\|_0\Big).\nn
\eenr
From the definition (\ref{def:taustar}), note that when $\check N=N,$ we have that $\tau^*=\tau^0.$ Recall the sets of indices $\cT$ from Condition A, and the set $\cT^*$ from (\ref{def:settstar}) for any $\tau\in\bar\R^{\check N}.$ Note that the intersection $\cT^{*c}\cap\cT^{c}$ comprises of all possible indices that may potentially lead to distinct interruptions between $\tau_{h_0},\tau_{h_1},...,\tau_{h_{\check N+1}}.$ Keeping this observation in mind, consider any non-negative sequences $u_n,v_n,$ any subset $\cK\subseteq \cT^{*c}\cap\cT^{c},$ define the collection,
\benr\label{def:rtset}
\hspace{2mm}\cG(u_n,v_n,\cK)=\Big\{\tau\in\bar\R^{\check N};\,\, \tau_1\le\tau_2\le...\le\tau_{\check N},\,\,\hspace{2cm}\\
v_{n}\le \sum_{j=1}^N d(\tau_{h_j},\tau^0_j)\le u_{n},\,\, {\rm and\,\,for\,\,each}\,\,l\in\cK,\,\,\tau_{l}\ne\tau_{l-1}\Big\}.\hspace{-1.5cm}\nn
\eenr
The arguments $u_n,v_n$ capture information regarding the closeness of an arbitrary vector to the unknown change point vector in the components corresponding to the set $\cT^*.$ The set $\cK\subseteq\cT^{*c}$ captures all distinct interruptions between any two components with indices in the set $\cT^*.$ The following example provides more insight to the construction of the set $\cG(u_n,v_n,\cK),$ and its defining arguments.
\begin{exmp} Consider the model (\ref{cp}) with $N=2.$ Let the initializer $\check \tau$ be chosen such that $\check N=5,$ such that $\cT=\{0,2,4,6\},$ and $\cT^c=\{1,3,5\}.$ Then for any $\tau=(\tau_1,\tau_2,\tau_3,\tau_4,\tau_5)^T\in\bar\R^{\check N},$ satisfying $\tau_1\le\tau_2...\le\tau_5,$ consider the following three scenarios.
	\begin{itemize}[leftmargin=*]
		\item[a] If $\tau_1<\tau_2<\tau_3=\tau_4<\tau_5,$ then $\cT^*=\{0,2,3,6\},$ and $\cT^{*c}=\{1,4,5\}.$ Clearly, the set $\cT^{*c}\cap\cT^c=\{1,5\}$ form the distinct interruptions. Thus, assuming that $v_n\le d(\tau_2,\tau_1^0)+d(\tau_3,\tau_2^0)\le u_n,$ then $\tau\in \cG(u_n,v_n,\cK),$ with $\cK=\{1,5\}.$
		\item[b] If $\tau_1=\tau_2=\tau_3=\tau_4=\tau_5,$ then $\cT^*=\{0,1,3,6\},$ and $\cT^{*c}=\{5\}.$ The potential interruptions can be due to induces in the set $\cT^{*c}\cap\cT^c=\{5\},$ however since in this case $\tau_5=\tau_4,$ hence $\cK=\emptyset.$
		\item[c]  If $\tau_1=\tau_2<\tau_3<\tau_4=\tau_5,$ then $\cT^*=\{0,1,4,6\},$ $\cT^{*c}=\{2,3,5\}.$ Potential interruptions can be due to induces in the set $\cT^{*c}\cap\cT^c=\{3,5\}.$ Since $\tau_5=\tau_4,$ thus in this case $\cK=\{3\}$ captures the sole distinct interruption.
	\end{itemize}
\end{exmp}

A partial motivation for defining the collection $\cG(u_n,v_n,\cK)$ is as follows. Recall from the results stated in (\ref{eq:ratesintro}), we intend to show that the number of finite and distinct components $\tilde N$ of $\h\tau$ obtained from {\bf Step 1} of Algorithm 1 matches exactly with the true number of change points $N,$ with high probability. The argument we develop to prove this result proceeds by showing that $\h\tau$ must lie in $\cG(u_n,v_n,\cK),$ where $\cK=\emptyset,$ with high probability. Note that the latter statement shall infact imply the desired result.

Finally, for any non-negative sequence $u_n,$ we also define the function,
\benr\label{def:fun}
F(u_n)=\begin{cases} 0 &\,\,{\rm if}\,\, u_n/l_{\min}\to 0\\ N &\,\, {\rm otherwise}\end{cases}.
\eenr
The following lemma provides a uniform lower bound of the expression $\cU(\check N,\h\al,\tau),$ over the collection $\cG:=\cG(u_n,v_n,\cK),$ that holds with high probability. This result shall lie at the heart of the argument used to obtain the main results of this article regarding variable selection and estimation of change points from Algorithm 1. For the result to follow, let $r_n$ be the $\ell_2$ rate obtained from the initial regression coefficients provided in Corollary \ref{cor:intialrate}, i.e., $r_n= c_uc_m \sqrt{s}\max\big\{\sqrt{\log p/n},\,\,\xi_{\max}{\check u}_n\big\}\big/l_{\min}.$

\begin{lem}\label{lem:ulowerbound} Suppose Conditions A, B(i), B(ii) C, and D hold. Let $\check u_n$ be as given in Condition A and choose $\la_0$ as prescribed in Corollary \ref {cor:intialrate}. Let $u_n,v_n$ be any non-negative sequences such that $\log (u_n^{-1})=O(\log p).$ Let $\cG:=\cG(u_n,v_n,\cK),$ and $F(u_n),$ be as defined in (\ref{def:rtset}), and (\ref{def:fun}). Additionally, let $\h\al$ be the estimates obtained from {\bf Step 0} of Algorithm 1. Then for $n$ sufficiently large, we have the following lower bounds.\vspace{1mm} \\
	(i) When $N=0,$ we have,
	\benr
	\inf_{\tau\in\cG}\cU(\check N,\h\al,\tau)\ge \mu|\cK|-c_uc_m|\cK|r_n^2- c_uc_m|\cK|\sqrt{\frac{s\log p}{n}}r_n,\nn
	\eenr
	with probability at least $1-c_1 (1\vee N)\exp(-c_2\log p).$\vspace{1mm}\\
	(ii) When $N\ge 1,$ and $v_n\ge c_uNs\log p/n,$\footnote{This result is also valid when $v_n=0.$} we have,
	\benr
	\inf_{\tau\in\cG}\cU(\check N,\h\al,\tau)&\ge& c_uc_mv_{n}+\mu|\cK|-c_uc_mN\frac{\rho^2s\log p}{n}-\frac{c_uc_m}{(1\vee\xi_{\min}^2)}|\cK|r_n^2\nn\\
	&&-\frac{c_uc_m}{(1\vee\xi_{\min}^2)}r_n^2u_{n}- \frac{c_uc_m\rho}{(1\vee\xi_{\min})}\sqrt{\frac{s\log p}{n}}\sqrt{Nu_n}\nn\\
	&&- \frac{c_uc_m}{(1\vee\xi_{\min}^2)}|\cK|\sqrt{\frac{s\log p}{n}}r_n-  \frac{\mu}{(1\vee\xi_{\min}^2)}F(u_n),\nn
	\eenr
	with probability at least $1-c_1 (1\vee N)\exp(-c_2\log p).$
\end{lem}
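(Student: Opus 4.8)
The plan is to split the objective as $\cU(\check N,\h\al,\tau)=\cU^*(\check N,\h\al,\tau)+\mu\,P(\tau)$, where $P(\tau)=\sum_{j=1}^{\check N}\big(\|d(\tau_j,\tau_{j-1})\|_0-\|d(\tau_j^*,\tau_{j-1}^*)\|_0\big)$ is the penalty difference, and to bound the loss difference $\cU^*$ and the penalty difference $P$ from below separately, uniformly over $\tau\in\cG$. For the loss difference I would write $\cU^*=\tfrac1n\sum_{i=1}^n\delta_i$, where the per-observation term $\delta_i$ vanishes unless $\tau$ and $\tau^*$ assign $w_i$ to segments carrying different coefficient indices; call these indices $c^\tau(i)$ and $c^*(i)$ and let $k(i)$ be the true segment of $i$. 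Substituting $y_i=x_i^T\b^0_{(k(i))}+\vep_i$ and expanding the difference of squares gives
\[
\delta_i=(x_i^Ta_i)^2-(x_i^Tb_i)^2+2\vep_i\,x_i^T(a_i-b_i),\qquad a_i=\b^0_{(k(i))}-\h\al_{(c^\tau(i))},\ \ b_i=\b^0_{(k(i))}-\h\al_{(c^*(i))}.
\]
The structural input from the construction of $\tau^*$ together with Corollary \ref{cor:intialrate} is that under $\tau^*$ every nonempty segment carries a coefficient approximating the \emph{correct} $\b^0$, so $\|b_i\|_2\le r_n$ throughout, whereas wherever $\tau$ displaces a boundary $\tau_{h_j}$ off $\tau^0_j$ the assigned coefficient approximates the neighbouring (wrong) $\b^0$, making $a_i$ a genuine jump with $\|a_i\|_2\ge\xi_{\min}-r_n$ and $a_i\in\A_2$.

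Next I would partition the disagreement set $D(\tau)$ into \emph{signal} regions, lying between $\tau_{h_j}$ and $\tau^0_j$ for $j=1,\dots,N$, and \emph{interruption} regions, one for each index $l\in\cK$. The leading positive term comes from the signal regions: by Lemma \ref{lem:indicatorbound} each such region carries of order $n\,d(\tau_{h_j},\tau^0_j)$ observations, and applying the uniform restricted-eigenvalue bound over the $\A_2$ cone (Lemma \ref{lem:restrictedeigen}(ii)) to $\tfrac1n\sum_{i\in D}(x_i^Ta_i)^2$ yields a lower bound $c_uc_m\,\xi_{\min}^2\sum_{j}d(\tau_{h_j},\tau^0_j)-c_uc_mN\xi_{\max}^2 s\log p/n$; using $\xi_{\min}>c_u$, $\sum_j d(\tau_{h_j},\tau^0_j)\ge v_n$, and normalizing the slack by the signal strength $\xi_{\min}^2$ (so that $\xi_{\max}^2/\xi_{\min}^2=\rho^2$) produces the terms $c_uc_m v_n-c_uc_mN\rho^2 s\log p/n$. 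The negative quadratic term $-\tfrac1n\sum_{i\in D}(x_i^Tb_i)^2$ is controlled from above using $\|b_i\|_2\le r_n$: on the signal regions the total mass is at most $u_n$, and since $u_n\ge v_n\ge c_uNs\log p/n$ the maximum in Lemma \ref{lem:restrictedeigen}(iii) equals $u_n$, giving the $r_n^2u_n$ term, while on each of the $|\cK|$ interruption segments the bounded-eigenvalue condition (D(i)) gives at most $c_uc_m r_n^2$, giving the $|\cK|r_n^2$ term.

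The cross terms $\tfrac2n\sum_{i\in D}\vep_i x_i^T(a_i-b_i)$ are then handled by the uniform cross-bound Lemma \ref{lem:crossbounds}: on interruption regions $\|a_i-b_i\|_2\le 2r_n$ yields the $|\cK|\sqrt{s\log p/n}\,r_n$ contribution, while on the $j$-th signal region $\|a_i-b_i\|_2$ is of order $\xi_{\max}$, and the Cauchy--Schwarz step $\sum_j\sqrt{d(\tau_{h_j},\tau^0_j)}\le\sqrt{N\sum_j d(\tau_{h_j},\tau^0_j)}\le\sqrt{Nu_n}$ gives the $\rho\sqrt{s\log p/n}\sqrt{Nu_n}$ contribution (the $\xi_{\min}$ normalization producing the $(1\vee\xi_{\min}^2)$ and $(1\vee\xi_{\min})$ denominators). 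For the penalty difference I would count distinct transitions: $\tau^*$ has exactly $N$ of them when $N\ge1$, while $\tau\in\cG$ carries at least the $|\cK|$ interruptions plus its transitions near the true change points, so $P(\tau)\ge|\cK|-F(u_n)$, where $F(u_n)=0$ when $u_n/l_{\min}\to0$ (the displacements are then too small to merge any of the $N$ true transitions) and $F(u_n)=N$ otherwise. Collecting the signal term, the two quadratic error terms, the two cross terms and $\mu P(\tau)$ gives part (ii); part (i) is the degenerate $N=0$ case, in which $\tau^*$ is the all-$(-\iny)$ vector, there is no signal region (all coefficients approximate the single $\b^0_{(0)}$, so $a_i,b_i$ are both of order $r_n$) and no merging issue, whence only $\mu|\cK|$, $|\cK|r_n^2$ and $|\cK|\sqrt{s\log p/n}\,r_n$ survive.

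The main obstacle is the uniform bookkeeping over all $\tau\in\cG$: identifying $D(\tau)$ and classifying each of its observations as signal or interruption across every admissible orientation of $\tau$ and every sign pattern of the displacements, while keeping all constants uniform in $\tau$. This is exactly where the uniform-in-$(\tau_a,\tau_b)$ formulations of Lemmas \ref{lem:indicatorbound}--\ref{lem:restrictedeigen} and \ref{lem:crossbounds} are indispensable, and where the nonconvexity of the problem and the multiplicity of segment configurations make isolating the clean $c_uc_m v_n$ signal and tracking the $\xi_{\min}$-normalized constants delicate.
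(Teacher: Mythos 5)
Your proposal follows essentially the same route as the paper's proof: the paper's $(T1)$--$(T4)$ and $(R1)$--$(R8)$ decomposition is exactly your expansion of the loss difference over the disagreement set into the signal quadratic term (handled by the uniform restricted eigenvalue bound over $\A_2$), the $r_n$-order error quadratic terms on signal and interruption regions, and the cross terms (handled by the uniform cross bounds with the same Cauchy--Schwarz step yielding $\sqrt{Nu_n}$), together with the combinatorial $|\cK|-F(u_n)$ accounting for the $\ell_0$ penalty difference. The only cosmetic difference is that the paper bounds the interruption-region quadratic via the uniform restricted eigenvalue upper bound (Lemma \ref{lem:restrictedeigen}(iii)) rather than invoking Condition D(i) directly, and it fixes one orientation of $\tau$ explicitly before noting the bound persists for all others.
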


The preceding results developed in this article provide the necessary machinery required to obtain the main results of this article regarding estimation of the number and locations of change points, and the regression coefficients obtained from Algorithm 1. The results to follow shall essentially say that, with high probability, Algorithm 1 exactly recovers the unknown number of change points and yields estimates of locations of change points that are in a near optimal neighborhood of the unknown change points. Additionally, Algorithm 2 yields regression coefficient estimates that are in an optimal neighborhood of the unknown regression coefficients. The following theorem provides the validity of the estimates $\tilde N$ and $\tilde\tau$ obtained from Algorithm 1.

\begin{thm}\label{thm:mainresult} Assume Conditions A, B, C and D and choose $\la_0$ as prescribed in Corollary \ref{cor:intialrate}. Let $\cT,$ $\cT^*,$ and $\h\cT$ be as defined in Condition A, (\ref{def:settstar}) and (\ref{def:hatct}) respectively. Then upon choosing $\mu=c_uc_m\rho(s\log p/n)^{1/k^*},$ with $k^*=2\vee k,$ the estimates $\tilde N,$ and $\tilde \tau$ obtained from Algorithm 1 satisfy the following relations,
	\benr
	&(i)& {\rm When}\,\, N\ge 0,\quad{\rm we\,\,have}\quad \tilde N=N,\nn\\
	&(ii)&  {\rm When}\,\, N\ge 1,\quad{\rm we\,\,have}\quad \sum_{j=1}^{N} d(\tilde\tau_{j},\tau_{j}^0)\le c_uc_m N\rho^2\frac{s\log p}{n},\nn
	\eenr
	with probability at least $1-c_1(1\vee N)\exp(-c_2\log p),$ and for $n$ sufficiently large.
\end{thm}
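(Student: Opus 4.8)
The plan is to play the optimality of $\h\tau$ off against the lower bound of Lemma~\ref{lem:ulowerbound}. Fix any global minimizer $\h\tau$ produced by \textbf{Step 1} of Algorithm 1 and build $\cT^*$ and $\tau^*$ from it via (\ref{def:settstar})--(\ref{def:taustar}); since $\tau^*$ is feasible for the \textbf{Step 1} program while $\h\tau$ is optimal, comparing the two objective values and recalling (\ref{def:calliustar}) gives at once
\[
\cU(\check N,\h\al,\h\tau)\le 0 .
\]
Write $\hat v=\sum_{j=1}^{N} d(\h\tau_{h_j},\tau^0_j)$ for the realized anchor error and let $\hat\cK\subseteq\cT^{*c}\cap\cT^{c}$ collect the indices $l$ with $\h\tau_l\ne\h\tau_{l-1}$, i.e.\ the spurious distinct breaks of $\h\tau$. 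Once $\hat v=o(l_{\min})$ (established below, which keeps the anchors $l_{\min}$-separated), the definition (\ref{def:hatct}) gives $\tilde N=N+|\hat\cK|$ and the distinct finite components of $\h\tau$ are exactly $\tilde\tau$ with $\sum_j d(\tilde\tau_j,\tau^0_j)=\hat v$. Thus both assertions reduce to showing, with the stated probability, that $\hat\cK=\emptyset$ and $\hat v\le c_uc_m N\rho^2 s\log p/n$.

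The device is to invoke Lemma~\ref{lem:ulowerbound}(ii) not at a single point but uniformly over the random pair $(\hat v,\hat\cK)$. Because $\hat v$ is random I would run a peeling argument over dyadic scales $v_n\in[2^{-(m+1)},2^{-m}]$ together with the monotonicity of the bound in $|\cK|$ (a union over the at most $2^{\check N}$ admissible $\cK\subseteq\cT^{*c}\cap\cT^c$); the $O(\log p)$ blocks and the combinatorial count of orientations are absorbed into the constants, so the failure probability stays $c_1(1\vee N)\exp(-c_2\log p)$. The two regimes of $F(u_n)$ are handled inside this sweep: on scales with $u_n/l_{\min}\not\to0$ one has $F(u_n)=N$, but there the leading term $c_uc_mv_n\gtrsim c_uc_m l_{\min}$ dwarfs every error term as well as the penalty mass $\mu N/(1\vee\xi_{\min}^2)$, since $\mu=c_uc_m\rho(s\log p/n)^{1/k^*}\to0$; this forces $\cU>0$, contradicting the display and confining $\h\tau$ to the regime $\hat v=o(l_{\min})$ where $F\equiv0$.

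Inside that regime, substituting $v_n=u_n=\hat v$, $\cK=\hat\cK$ and $F=0$ into Lemma~\ref{lem:ulowerbound}(ii) and using $\cU(\check N,\h\al,\h\tau)\le0$ gives
\[
c_uc_m\hat v+\mu|\hat\cK|\le c_uc_m N\frac{\rho^2 s\log p}{n}+c_uc_m|\hat\cK|\Big(r_n^2+\sqrt{\tfrac{s\log p}{n}}\,r_n\Big)+c_uc_m r_n^2\hat v+c_uc_m\rho\sqrt{\tfrac{s\log p}{n}}\sqrt{N\hat v}.
\]
The calibration $\mu=c_uc_m\rho(s\log p/n)^{1/k^*}$ with $k^*=2\vee k$ is chosen precisely so that $\mu$ strictly dominates the per-break quantities $r_n^2$ and $\sqrt{s\log p/n}\,r_n$: recalling $r_n=c_uc_m\sqrt{s}\max\{\sqrt{\log p/n},\xi_{\max}\check u_n\}/l_{\min}$, the exponent $1/k^*\le1/k$ supplies exactly the slack needed to beat the $\xi_{\max}\check u_n$ contribution, and Condition B(iii) $(s\rho^2/l_{\min}^2 n^{1/k}\to0)$ is what makes $\mu\gg r_n^2$; one checks likewise that $\mu$ beats the floor $N\rho^2 s\log p/n$. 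Hence the inequality is untenable unless $|\hat\cK|=0$, which is assertion (i). Feeding $\hat\cK=\emptyset$ back into the same inequality removes the $\mu$-terms and leaves $c_uc_m\hat v\le c_uc_m N\rho^2 s\log p/n+c_uc_m r_n^2\hat v+c_uc_m\rho\sqrt{s\log p/n}\sqrt{N\hat v}$; since $r_n^2=o(1)$ under Conditions B and C and the last term splits by $\sqrt{ab}\le\tfrac12(\varepsilon a+\varepsilon^{-1}b)$ into an $o(\hat v)$ piece and a piece of the target order, both may be absorbed into the left side, yielding $\hat v\le c_uc_m N\rho^2 s\log p/n$ and hence (ii). The chief obstacle I anticipate is this simultaneous calibration of $\mu$ — it must be large enough relative to $r_n$ (which itself inherits the arbitrariness of the initializer through $\check u_n$) to annihilate every spurious break, yet it must not enter the localization step so as to leave the rate intact — compounded by the fact that $\tau^*$, and with it $\cT^*$, $\hat\cK$ and the whole collection $\cG$, are functions of the random optimizer, forcing the lower bound to be wielded uniformly over all admissible orientations rather than at one deterministic target.
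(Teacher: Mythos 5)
Your proposal is sound and rests on the same basic mechanism as the paper's proof: comparing the optimality inequality $\cU(\check N,\h\al,\h\tau)\le 0$ (from feasibility of $\tau^*$) against the uniform lower bound of Lemma \ref{lem:ulowerbound}, with $\mu=c_uc_m\rho(s\log p/n)^{1/k^*}$ calibrated to dominate the per-break quantities $r_n^2$ and $\sqrt{s\log p/n}\,r_n$, so that any non-empty $\cK$ forces $\cU>0$ and hence $\tilde N=N$. Where you genuinely depart from the paper is in extracting the localization rate. The paper never evaluates the bound at the random $\hat v$: it starts from the trivial envelope $u_n=N$, applies Lemma \ref{lem:ulowerbound} with a deterministic threshold $v_n^*\asymp N\rho(s\log p/n)^{1/k^*}$, then recursively resets $u_n$ to the previous bound; the exponents obey $a_m=\min\{\tfrac12+\tfrac{a_{m-1}}{2},\tfrac{1}{k^*}+a_{m-1}\}$ and $b_m=1+\tfrac{b_{m-1}}{2}$, and the target rate $N\rho^2 s\log p/n$ is reached only as $a_m\to 1$, $b_m\to 2$ after infinitely many recursions, the probability being preserved by the nested-events argument of Remark \ref{contain}. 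Your dyadic peeling combined with the one-shot absorption of the cross term $\rho\sqrt{s\log p/n}\sqrt{N\hat v}$ via $\sqrt{ab}\le\tfrac{\varepsilon}{2}a+\tfrac{1}{2\varepsilon}b$ reaches the same rate in a single pass, avoiding both the infinite recursion and the nested-events bookkeeping; this is arguably the cleaner route, and it is legitimate here because on each shell $u_n\le 2v_n$, so the absorbed piece is $o(1)\cdot v_n$ against the fixed leading constant. Two minor observations: the union over the $2^{\check N}$ subsets $\cK$ you invoke is unnecessary (and would be costly if it were actually needed) — the concentration events underlying Lemma \ref{lem:ulowerbound} are uniform over $\tau_a,\tau_b$, so the bound holds simultaneously for every $\cK$ and every orientation on a single event of probability $1-c_1(1\vee N)\exp(-c_2\log p)$; and your disposal of the $F(u_n)=N$ regime via $v_n\gtrsim l_{\min}\gg N\mu$ differs from the paper, which instead picks $v_n^*\asymp N\mu$ so that the leading term beats $N\mu/(1\vee\xi_{\min}^2)$ directly, though both ultimately lean on the same implicit requirement $N\mu/l_{\min}\to 0$ to conclude that the anchors $\h\tau_{h_j}$ remain distinct.
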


The usefulness of Theorem \ref{thm:mainresult} is apparent. Despite initializing Algorithm 1 with an arbitrarily large $\check N,$ the estimates $\h\tau$ obtained from {\bf Step 1} of Algorithm 1 will have exactly $N$ finite and distinct components with high probability (all other components will collapse to any of the remaining $N$ distinct components or negative infinity). Additionally, the components of $\h\tau$ that are identified as finite and distinct will lie in a near optimal neighborhood of the true change point vector $\tau^0.$ Recall that estimate $\h\tau$ from {\bf Step 1} of Algorithm 1 are computed based on regression estimates from {\bf Step 0} that may be much slower than optimal in their rate of convergence. Yet, $\h\tau$ of is near optimal in its rate of convergence. It is also important to remember that this process is carried out in a single step and not by an iterative procedure, thereby also providing the algorithm its computational advantage. The following theorem provides the rate of convergence of regression coefficient estimates obtained from Algorithm 2.



\begin{cor}\label{cor:tildealpha} Suppose the conditions of Theorem \ref{thm:mainresult} and for each $j=0,...,N,$ choose $\la_{1j}=c_uc_m\max\big\{\sqrt{\log p/n},\,\, \xi_{\max}(|\tilde\tau_j-\tau^0_{j}|\vee|\tilde\tau_{j+1}-\tau^0_{j+1}|)\big\}.$ Let $N\ge 1,$ and $\tilde\al_{(j)},$ $j=0,...,N$ be estimates of the regression coefficients obtained from Algorithm 3. Then, for $n$ sufficiently large and $q=1,2,$ we have the following bound,
	\benr
	\sum_{j=0}^{N}\|\tilde\al_{(j)}-\b^0_{(j)}\|_q\le\nn c_uc_mN\frac{s^{\frac{1}{q}}}{l_{\min}}\max\Big\{\sqrt{\frac{\log p}{n}},\,\, \xi_{\max}\rho^2\frac{s\log p}{n}\Big\},\nn
	\eenr
	that holds with probability at least $1-c_1(1\vee N)\exp(-c_2\log p). $
\end{cor}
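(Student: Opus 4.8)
The plan is to reduce the corollary to the per-segment Lasso rate already established in Theorem \ref{thm:initialrate}, and then to sum the $N+1$ segmentwise bounds while carefully exploiting the \emph{aggregate} change point bound of Theorem \ref{thm:mainresult} so as to keep the leading factor at $N$ rather than $N^2$. First I would invoke Theorem \ref{thm:mainresult}(i) to assert that, on the event of probability at least $1-c_1(1\vee N)\exp(-c_2\log p)$, we have $\tilde N=N$, so that Algorithm 2 executes exactly $N+1$ Lasso optimizations, one on each estimated segment $(\tilde\tau_j,\tilde\tau_{j+1}]$, $j=0,\ldots,N$ (with $\tilde\tau_0=-\iny$, $\tilde\tau_{N+1}=\iny$), each targeting $\b^0_{(j)}$. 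On this same event, Theorem \ref{thm:mainresult}(ii) gives $\sum_{j=1}^N d(\tilde\tau_j,\tau^0_j)\le c_uc_m N\rho^2 s\log p/n$, and in particular each endpoint error $d(\tilde\tau_j,\tau^0_j)$ is of this order; writing $u_n^{(j)}:=d(\tilde\tau_j,\tau^0_j)\vee d(\tilde\tau_{j+1},\tau^0_{j+1})$, this places $\tilde\tau_j\in\cB(\tau^0_j,u_n^{(j)})$ and $\tilde\tau_{j+1}\in\cB(\tau^0_{j+1},u_n^{(j)})$.

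Next I would place each segment into the framework of Theorem \ref{thm:initialrate}. Since both endpoints of $(\tilde\tau_j,\tilde\tau_{j+1}]$ lie in the cdf-neighborhoods $\cB(\tau^0_j,u_n^{(j)})$ and $\cB(\tau^0_{j+1},u_n^{(j)})$, the pair $(\tilde\tau_j,\tilde\tau_{j+1})$ belongs to the set $\cC^1_{j+1}$ of that theorem; the required separation $d(\tilde\tau_j,\tilde\tau_{j+1})>l_{\min}$ follows from Condition C(ii) together with $u_n^{(j)}\ll l_{\min}$ (Condition B), since $d(\tilde\tau_j,\tilde\tau_{j+1})\ge c_ul_{\min}-2u_n^{(j)}>l_{\min}$ for $n$ large. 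The endpoint segments $j=0$ and $j=N$ are handled identically, the outer points $-\iny$ and $\iny$ trivially lying in $\cB(\tau^0_0,\cdot)$ and $\cB(\tau^0_{N+1},\cdot)$. The prescribed $\la_{1j}=c_uc_m\max\{\sqrt{\log p/n},\,\xi_{\max}u_n^{(j)}\}$ matches the penalty required in Theorem \ref{thm:initialrate} with $u_n=u_n^{(j)}$, so that theorem yields, for each $j$ and $q\in\{1,2\}$,
\[
\|\tilde\al_{(j)}-\b^0_{(j)}\|_q\le \frac{c_uc_m s^{1/q}}{l_{\min}}\max\Big\{\sqrt{\tfrac{\log p}{n}},\ \xi_{\max}u_n^{(j)}\Big\}.
\]

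Finally I would sum these $N+1$ bounds. Using $\max\{A,B\}\le A+B$, the sum splits into $\sum_j\sqrt{\log p/n}$, contributing $(N+1)\sqrt{\log p/n}$, plus $\xi_{\max}\sum_j u_n^{(j)}\le 2\xi_{\max}\sum_{j=1}^N d(\tilde\tau_j,\tau^0_j)\le 2\xi_{\max}c_uc_m N\rho^2 s\log p/n$ by the aggregate bound of Theorem \ref{thm:mainresult}(ii). Collecting terms gives exactly
\[
\sum_{j=0}^N\|\tilde\al_{(j)}-\b^0_{(j)}\|_q\le c_uc_m N\frac{s^{1/q}}{l_{\min}}\max\Big\{\sqrt{\tfrac{\log p}{n}},\ \xi_{\max}\rho^2\tfrac{s\log p}{n}\Big\},
\]
which is the claimed rate.

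The main obstacle is precisely this summation: a naive substitution of the worst-case radius $u_n^{(j)}\approx N\rho^2 s\log p/n$ into every segment would produce a spurious $N^2\rho^2$ factor, so the data-adaptive, per-segment choice of $\la_{1j}$ together with the aggregate (rather than uniform) control of the endpoint errors is what keeps the bound linear in $N$. A secondary technical point is the scale of the radius appearing in $\la_{1j}$: Theorem \ref{thm:initialrate} operates in the cdf scale $d(\cdot,\cdot)$, so one reads $|\tilde\tau_j-\tau^0_j|$ there as $d(\tilde\tau_j,\tau^0_j)$, and the degenerate case $d(\tilde\tau_j,\tau^0_j)=0$ (where $\log(u_n^{-1})=O(\log p)$ is vacuous) is handled by noting the rate then collapses to its $\sqrt{\log p/n}$ term.
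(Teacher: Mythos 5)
Your proposal is correct and follows essentially the same route as the paper's own proof: restrict to the event of Theorem \ref{thm:mainresult}, set $u_{nj}=d(\tilde\tau_j,\tau^0_j)\vee d(\tilde\tau_{j+1},\tau^0_{j+1})$, apply Theorem \ref{thm:initialrate} segmentwise with the data-adaptive $\la_{1j}$, and sum using the aggregate bound $\sum_j u_{nj}\le c_uc_mN\rho^2 s\log p/n$ to avoid an $N^2$ factor. Your additional verification that each estimated segment lies in $\cC^1_{j+1}$ with the required $l_{\min}$-separation is a detail the paper leaves implicit, but it does not change the argument.
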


To conclude this section, we present the following corollary that specifies conditions under which near optimality of these rates is observed, as described in (\ref{eq:ratesintro}).

\begin{cor}\label{cor:rateoptimal} Suppose conditions of Theorem \ref{thm:mainresult} and Corollary \ref{cor:tildealpha}. Then assuming that $\rho^2=O(1),$ we have the following relations with probability at least $1-c_1(1\vee N)\exp(-c_2\log p).$ \\
	(i) For $N\ge 0,$ $\tilde N=N.$\\~
	(ii) For $N\ge 1,$ and $n$ sufficiently large, $\sum_{j=1}^{N} d({\tilde\tau}_{j},\tau_{j}^0) \le c_uc_m Ns\log p\big/n.$
	\\~
	(iii) Additionally assuming that $N\le 1\vee c_{u1},$ $l_{\min}\ge c_{u2},$ and that $\xi_{\max}s\sqrt{\log p/n}=O(1).$ We have,
	$\sum_{j=0}^{N}\|\tilde\al_{(j)}-\b^0_{(j)}\|_q\le\nn c_uc_mNs^{1/q}\sqrt{\log p/n},$ for $q=1,2$ and $n$ sufficiently large.
\end{cor}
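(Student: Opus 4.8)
The plan is to derive Corollary~\ref{cor:rateoptimal} as a straightforward specialization of Theorem~\ref{thm:mainresult} and Corollary~\ref{cor:tildealpha} under the additional assumption $\rho^2=O(1)$, together with the boundedness conditions stated in part (iii). First, part (i) requires no additional work: Theorem~\ref{thm:mainresult}(i) already gives $\tilde N=N$ for $N\ge 0$ under Conditions A--D with the prescribed choice of $\mu$, and since these conditions are inherited by assumption, the conclusion transfers verbatim. The only point to verify is that the stipulation $\rho^2=O(1)$ is consistent with (indeed strengthens) Condition C(i), so the hypotheses of Theorem~\ref{thm:mainresult} remain in force.

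For part (ii), I would start from the bound in Theorem~\ref{thm:mainresult}(ii),
\benn
\sum_{j=1}^{N} d(\tilde\tau_{j},\tau_{j}^0)\le c_uc_m N\rho^2\frac{s\log p}{n},
\eenn
and simply absorb the factor $\rho^2=O(1)$ into the generic constant $c_uc_m$. Since $\rho^2$ is bounded by a universal constant, $c_uc_m N\rho^2 (s\log p/n)\le c_uc_m N (s\log p/n)$, which is exactly the claimed near-optimal rate. The probability statement $1-c_1(1\vee N)\exp(-c_2\log p)$ carries through unchanged because we are not invoking any new high-probability event, merely renaming constants.

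For part (iii), I would begin with the conclusion of Corollary~\ref{cor:tildealpha},
\benn
\sum_{j=0}^{N}\|\tilde\al_{(j)}-\b^0_{(j)}\|_q\le c_uc_mN\frac{s^{\frac{1}{q}}}{l_{\min}}\max\Big\{\sqrt{\frac{\log p}{n}},\,\, \xi_{\max}\rho^2\frac{s\log p}{n}\Big\},
\eenn
and reduce the $\max\{\cdot,\cdot\}$ to its first argument. Under $\rho^2=O(1)$ and $l_{\min}\ge c_{u2}$, the prefactor $s^{1/q}/l_{\min}$ is controlled by $c_u s^{1/q}$. The key manipulation is to show the second term inside the maximum is dominated by the first: using $\xi_{\max}\rho^2 (s\log p/n)\le c_u\,\xi_{\max}(s\log p/n)$ and the hypothesis $\xi_{\max}s\sqrt{\log p/n}=O(1)$, we get
\benn
\xi_{\max}\frac{s\log p}{n}=\Big(\xi_{\max}s\sqrt{\frac{\log p}{n}}\Big)\sqrt{\frac{\log p}{n}}\le c_u\sqrt{\frac{\log p}{n}},
\eenn
so the maximum equals $\sqrt{\log p/n}$ up to constants. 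Combining with $N\le 1\vee c_{u1}$ then yields the stated bound $c_uc_m N s^{1/q}\sqrt{\log p/n}$.

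The proof is essentially bookkeeping, so there is no deep obstacle; the only place demanding care is the domination argument in part (iii), where one must verify that the regularization/localization term $\xi_{\max}\rho^2 (s\log p/n)$ does not dominate the parametric rate $\sqrt{\log p/n}$. This is precisely what the three scaling hypotheses ($\rho^2=O(1)$, $l_{\min}\ge c_{u2}$, $\xi_{\max}s\sqrt{\log p/n}=O(1)$) are engineered to guarantee, so the task reduces to algebraically confirming that each hypothesis neutralizes the corresponding excess factor. I would present this as a short chain of inequalities rather than a substantive argument, and note throughout that all constants may be rebundled into the generic $c_u,c_m$ without affecting the probability bound.
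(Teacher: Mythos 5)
Your proposal is correct and is essentially the argument the paper intends (the paper in fact gives no separate proof of this corollary, treating it as an immediate specialization of Theorem \ref{thm:mainresult} and Corollary \ref{cor:tildealpha}). The constant-absorption in parts (i)--(ii) and the domination step $\xi_{\max}\frac{s\log p}{n}=\big(\xi_{\max}s\sqrt{\log p/n}\big)\sqrt{\log p/n}=O\big(\sqrt{\log p/n}\big)$ in part (iii) are exactly the required bookkeeping, and the probability bound carries over since no new events are introduced.
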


\section{Implementation and numerical results}\label{sec:implement}

In this section we discuss the implementation of the proposed methodology and provide monte carlo simulation results of the same. First, as briefly stated in Section \ref{sec:intro}, for any fixed $\al\in\R^{p(\check N+1)},$ the loss function $Q(\check N,\al,\tau)$ is step function of $\tau,$ with step changes occurring at any point on the $\check N$ dimensional finite grid $\{-\iny,w_1,....,w_n,\iny\}^{\check N}.$ We illustrate this fact in Figure \ref{fig:stepbehave.zeroinflate}, for the special case where $N=\check N=1.$ To proceed with the implementation of Algorithm 1, first note that {\bf Step 1} of Algorithm 1 requires $\Phi(\cdot)$ to be known \big(via the distance function $d(\cdot)$\big), which is typically not the case in practice. However, also note that the function $d(\cdot)$ appears in the optimization of {\bf Step 1} only via the $\ell_0$ norm,  $\|d(\tau_{j-1},\tau_j)\|_0.$ Observing that $\|d(\tau_{j-1},\tau_j)\|_0=\|\tau_{j-1}-\tau_j\|_0,$ provided we implicitly define the additional conventions $\|\iny-\iny\|_0:=0,$ and $\|\iny-a\|_0:=1,$ for any $a<\iny,$ in the implementation. Thus, the term $\|d(\tau_{j-1},\tau_j)\|_0$ can be replaced by $\|\tau_{j-1}-\tau_j\|_0$ without altering the estimator. Alternatively, to avoid this notational complexity in coding the estimator, a new surrogate variable $w_i^*$ can be created which follows a pseudo uniform distribution\footnote{Here we refer to a pseudo uniform distribution in the sense typically used in MCMC methods, where the realizations $w_i^*,....w_n^*$ reproduce the behavior of $n$ realizations of a $\cU(0,1]$ distribution, see, Definition 2.1 of \cite{robert2013monte}.}, $w_i^*\sim\cU(0,1],$ while preserving the data structure. This can be done as follows, let $w_{(1)},..w_{(n)}$ represent the order statistics of $w_i's,$ and construct $w_{(i)}^*=i/n,$ $i=1,...,n.$ Since $w_i$'s are independent realizations, the surrogate $w_i^*\sim\cU(0,1]$ in the sense described above. In this case, we can reparameterize the model (\ref{cp}) to an ordinary change point regression model as follows. First, re-order all observations with respect to the ordered surrogate change inducing variable $w_{(1)}^*,...,w_{(n)}^*.$ Then we can express model (\r{cp}) as,
\benr\label{model:reparcp}
\hspace{0.5cm}y_{i}= x_{i}^T\b_{(j-1)}^0 +\vep_{i},\quad \tau_{j-1}^{\dagger}<i/n\le \tau_j^{\dagger},\,\,j=1,...,N+1.
\eenr
Here, $\tau_j^{\dagger},$ $j=1,...,N$ are reparameterized change point parameters in the ${\rm Supp}(w^*)=(0,1],$ and $\tau_0^{\dagger}=0,\tau_{N+1}^{\dagger}=1.$ In view of this reparameterization, together with the step behavior of the function $Q(\check N, \h\al,\cdot),$ we can now equivalently implement Algorithm 1a, in place of Algorithm 1.

\begin{figure}[H]
	\noi\rule{\textwidth}{0.5pt}
	
	\vspace{-3mm}
	\begin{flushleft}{\bf Algorithm 1a:} Detection and estimation of number of change point(s) with reparameterization and data $(x,y,w^*)$\end{flushleft}
	\vspace{-4.75mm}
	\noi\rule{\textwidth}{0.5pt}
	
	\vspace{-1mm}
	\begin{itemize}[wide, labelwidth=!, labelindent=0pt]
		\item[{\bf Step 0}:] ({\bf Initializing step}) Choose any $\check N\ge 1\vee N,$ and any vector $\check\tau=(\check\tau_1,...,\check\tau_{\check N})^T\in\R^{\check N},$ satisfying {\bf Condition A}. Compute initial regression estimates $\h\al_{(j)},$ for each $j=0,...,\check N,$
		\vspace{-1mm}
		\benr
		\h\al_{(j)}=\argmin_{\al\in\R^{p}}\Big\{\frac{1}{n}Q^*(\al, \check\tau_{j-1},\check\tau_j)+\la_0\|\alpha\|_1 \Big\},\qquad \la_0>0\nn
		\eenr
		\vspace{-4mm}
		\item[{\bf Step 1}:]  Update $\check \tau\in\R^{\check N}$ to obtain estimate $\hat \tau\in\bar\R^{\check N}$, where,
		\vspace{-2mm}
		\benr
		\h\tau=\argmin_{\substack{\tau\in \{0,\frac{1}{n},\frac{2}{n}...,1\}^{\check N};\\ \tau_{j-1}\le\tau_j\, \forall\, j}}\Big\{Q(\check N, \h\al, \tau)+ \mu \sum_{j=1}^{\check N}\|d(\tau_{j-1},\tau_{j})\|_0 \Big\},\qquad \mu>0.\nn
		\eenr
		\vspace{-4mm}
		
		\noi Let $\h\cT:=\h\cT(\h\tau),$ and update the estimated number of change points to $\tilde N = |\h \cT|,$ and recover the corresponding locations of change points as the subset $\tilde\tau=\h\tau_{\h\cT}\in\R^{\tilde N}.$
	\end{itemize}
	
	\vspace{-3mm}
	\noi\rule{\textwidth}{0.5pt}
\end{figure}

The change made in Algorithm 1a (in comparison to Algorithm 1) is in {\bf Step 1} of the procedure. First instead of searching over the extended Euclidean space, we are instead searching over a finite multi-dimensional grid. Second, owing to the creation of the surrogate change inducing variable, $w_i^*\sim\cU(0,1],$ we have $d(\tau_{j-1},\tau_j)=|\tau_{j-1}-\tau_j|.$ The only difference is that, Algorithm 1a estimates the parameters of the reparameterized model (\ref{model:reparcp}) instead of (\ref{cp}). The change point parameters of model (\r{cp}) can be easily obtained from those of (\ref{model:reparcp}) by reverting back to the corresponding quantiles.

Observe that {\bf Step 0} of Algorithm 1a and {\bf Step 1} of Algorithm 2 are ordinary Lasso optimizations, these can be accomplished by several different methods available in the literature, for e.g. coordinate or gradient descent algorithms, see, e.g. \cite{hastie2015statistical}  or via interior point methods for linear optimization under second order conic constraints, see, e.g., \cite{koenker2014convex}. On the other hand, the implementation of {\bf Step 1} of Algorithm 1a is a non trivial task. Keeping in mind that this step is a discrete optimization over a finite state space, we propose a simulated annealing approach for this purpose and the method is discussed in the following subsection.

\subsection{Implementation of {\textit{\textbf {Step 1}}} of Algorithm 1a via simulated annealing} \label{sec:simanneal}

Simulated annealing is a well known variant of the Metropolis Hastings algorithm, see, for e.g. Chapter 5 and Chapter 7 of the monograph \cite{robert2013monte}. This algorithm is especially useful for finite state space optimizations, and its stochastic nature endows it with its most desirable feature, which is its ability to escape local optimums while only visiting very few states of the state space under consideration.

First, we require another reparameterization of {\bf Step 1} of Algorithm 1a. Let $d^{\dagger}=(d_1^{\dagger},...,d_{N}^{\dagger})^T\in\R^N,$ be parameters of the model (\ref{model:reparcp}), such that $n\tau_1^{\dagger}=d_1,n\tau_2^{\dagger}=d_1+d_2,...,n\tau_{N}^{\dagger}=\sum_{j=1}^{N}d_j^{\dagger}.$ Then {\bf Step 1} of Algorithm 1a can equivalently be performed by searching for an optimizer $\h d=(\h d_1,...,\h d_{\check N})^T$ in the state space $\{0,1...n\}^{\check N},$ as follows,
\benr\label{eq:reparopt}
\h d=\argmin_{\substack{d\in \{0,1,2...,n\}^{\check N};\\ \sum_{j=1}^{\check N}d_j\le n}}\Big\{Q(\check N, \h\al, \tau)+ \mu \sum_{j=1}^{\check N}\|\frac{d_j}{n}\|_0 \Big\},\qquad \mu>0,
\eenr
where $\tau=(\tau_1,...,\tau_{\check N})^T,$ with $n\tau_j=\sum_{k=1}^j d_k,$ $j=1,...,\check N.$ Finally, the change point estimates of {\bf Step 1} of Algorithm 1a can be recovered by computing $\h\tau=(\h d_1,\h d_{1}+\h d_2,....,\sum_{j=1}^{\check N} \h d_j)^T\big/n.$ We adopt simulated annealing in the context of optimization (\ref{eq:reparopt}).

For efficient implementation of this procedure, one requires a carefully constructed proposal density taking into account special features of the problem under consideration. Specifically, in our setup we construct a proposal density which encourages the algorithm to visit sparse states of the components of the vector $d,$ over which the optimization (\ref{eq:reparopt}) is to be performed.


\vspace{1mm}
{\textit{Construction of proposal density}}: In the optimization step of (\ref{eq:reparopt}), the finite state space under consideration is $\{0,1,...,n\}^{\check N}.$ Additionally we intend to construct a proposal density that encourages the algorithm to visit $\check N$ dimensional states with sparse solutions. For this purpose, let $M\ge 1$ be the total number of iterations of the simulated annealing algorithm to be performed, and for any $x=(x_1,...,x_{\check N})^T\in\{0,1,...,n\}^{\check N},$ let $g(x)=\big(g_1(x_1),...,g_{\check N}(x_{\check N})\big)^T$ be the $\check N$ dimensional componentwise density functions, where each component is a discrete uniform density with an inflated probability at zero, i.e., for each $i=1,...,M,$ $j=1,...,\check N,$ define,
\benr\label{eq:proposal}
g_j(x):=g_{j}(x_j;d;b;\pi_{ij})=
\begin{cases}
	\pi_{ij}, & x=0 \\
	{\rm discrete Uniform}, & x_j\in\{l,u\},
\end{cases}
\eenr
where $d=(d_1,...,d_{\check N})^T\in\{0,...,n\}^{\check N},$ $b\in\{0,...,n\}$ and $\pi_{ij}\in[0,1]$ are parameters of this proposal distribution. The lower and upper limits are $l=\max\{0,d_j-b\},$ and $u=\min\{n-\sum_{j=1}^{j-1}d_k,d_j+b\}.$ Here $b$ and $\pi_{ij}$'s are user chosen parameters, where higher values of $b$ allow for larger jumps between states and $\pi_{ij}$'s are zero inflation parameters that encourage sparsity in the $j^{th}$ component. Lastly, the parameter $d$ is the $\check N$ dimensional centering parameter, i.e., realizations from this proposal are roughly centered around the components of $d.$ Note that the limits of the discrete uniform part of the proposal enforce the restriction that any candidate state $d'$ generated by the proposal satisfies $\sum_{j=1}^{\check N}d_j'\le n,$ which is required for the optimization (\ref{eq:reparopt}).

Next we discuss the choice of the zero inflation parameters $\pi_{ij}$'s in the proposal densities. The objective of introducing this zero inflation in the proposal is meant in order to allow the algorithm to visit all combinations of sparse states of the components of $d.$ For this purpose we design a zero inflation mechanism changing with iteration $i,$ as illustrated in Figure \ref{fig:stepbehave.zeroinflate} (for the case $\check N=3$). The zero inflation parameter $\pi_{ij}$ for each component $j$ of proposal is constructed to follow a sine curve oscillating in the interval (0,1), over the iterations $i$'s. Critically, the sine curve corresponding to each component $g_{ij}$ is chosen such that it has a different period of oscillation in comparison to all other components. These varying periods of oscillation create all possible sparsity patterns among the components of the candidate $d,$ i.e., given a large number of periods of the sine curves, any sparse combination of $d$'s will be generated at some iterations between $1,...,M.$ More specifically, for each iteration $i=1,...,M,$ we set
\benr\label{eq:proposal}
\pi_{ij}=0.475\sin\Big(\frac{i2\pi}{Ma_j}\Big)+0.475,
\eenr
here $a_j$ is the number of periods of the sine curve between $1,...,M,$ chosen for the $j^{th}$ component. This completes the necessary requirements to implement simulated annealing. For completeness, we state in Algorithm 3, the simulated annealing algorithm in context of the optimization (\ref{eq:reparopt}).

\begin{figure}[]
	\centering
	\begin{minipage}[b]{0.45\textwidth}
		\includegraphics[width=\textwidth]{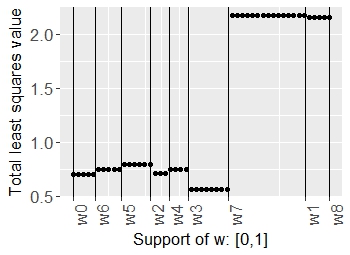}
	\end{minipage}
	\hspace{0.15in}
	\begin{minipage}[b]{0.45\textwidth}
		\includegraphics[width=\textwidth]{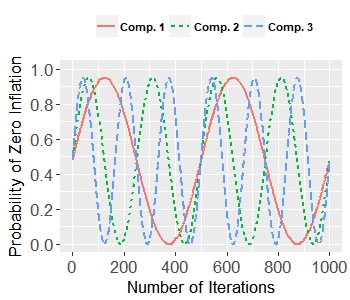}
	\end{minipage}
	\caption{\footnotesize{ {\it Left Panel}: Step behavior of $Q({\check N},\h\al,\tau)$ over $\tau\in {\rm Supp}(w),$ $Q({\check N},\h\al,\tau)$ evaluated over grid of points $\tau\in\{0,0.02,...,1\}.$ Here $w_i\sim\cU(0,1),$ $n=7$ $N=1,$ $\check N=1,$ $p=3,$ $\b^0_{(0)}=(1,0,0)^T,$ $\b^0_{(0)}=(1,1,0)^T,$ $\h\al_{(0)}=(0.41,0,0)^T,$ $\h\al_{(1)}=(0.13,0.92,0)^T,$ $w_0=0,$ $w_8=1.$ Observe that step changes occur at $w_i$'s.} {\it Right Panel}: Construction of zero inflation for proposal density (\ref{eq:proposal}) with $\check N=3.$ Zero inflation probability $\pi_{ij}$ is controlled via a sine curve, where the period of each sine curve is different, thereby producing candidate states $d,$ with all possible sparsity patterns.}
	\label{fig:stepbehave.zeroinflate}
\end{figure}

\begin{figure}[H]
	\noi\rule{\textwidth}{0.5pt}
	
	\vspace{-3mm}
	\begin{flushleft}{\bf Algorithm 4:} Simulated annealing for implementation of optimization (\ref{eq:reparopt})\end{flushleft}
	\vspace{-4.75mm}
	\noi\rule{\textwidth}{0.5pt}
	
	\vspace{-2mm}
	\begin{flushleft} Let $d^i=(d_1^i,...d_{\check N}^i)$ represent the state at the $i^{th}$ iteration, then\end{flushleft}
	
	\vspace{-2.5mm}
	\begin{itemize}[wide, labelwidth=!, labelindent=0pt]
		\item[(i)] Simulate a  candidate $d=(d_1,...,d_{\check N})^T$ from the $\check N$-dimensional proposal density $g(x; d^{i};b;\pi_{i})$ constructed in (\ref{eq:proposal}), where $\pi_i=(\pi_{i1},...,\pi_{i\check N})^T.$
		\item[(ii)] Accept $d^{i+1}=d$ with probability $\rho_i=\exp\big(\Delta h_i/T_i\big)\wedge 1;$ take $d^{i+1}=d^{i}$ otherwise.
		\item[(iii)] Update $T_i$ to $T_{i+1},$ and $\pi_{ij}$ to $\pi_{(i+1)j},$ for each $j=1,...,\check N.$	
	\end{itemize}
	
	\vspace{-3mm}
	\noi\rule{\textwidth}{0.5pt}
\end{figure}

Here $\Delta h_i=h(d^i)-h(d),$ where $h(d)=Q(\check N, \h\al, \tau)+ \mu \sum_{j=1}^{\check N}\|\frac{d_j}{n}\|_0.$ Also, $T_i,$ $i=1,..,n$ represents a user chosen decreasing sequence of positive numbers, which is also commonly referred to as the `temperature function' of simulated annealing. An illustration of the evolution of the simulated annealing algorithm with the above described proposal density for the optimization (\ref{eq:reparopt}) is provided in Figure \ref{fig:simannealevolution}. The following subsection provides numerical results obtained via monte carlo simulations of the methodology described here.

\begin{figure}
	\centering
	\includegraphics[width=0.5\linewidth]{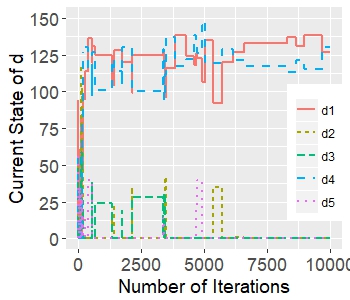}
	\caption{\footnotesize{Illustration of the evolution of simulated annealing for the optimization (\ref{eq:reparopt}) to obtain $\h d=(\h d_1,...,\h d_{\check N})^T.$ Here $n=375,$ $p=50,$ $N=2,$ $\check N=5,$ $\mu=0.25.$ The true change points are located at $\tau_1^0=125$ and $\tau_2^0=250.$ The proposal density is that in (\ref{eq:proposal}), with $a_j=1\big/\big(250+25*(j-1)\big),$ $j=1,...,\check N.$ The temperature function is set to $T_i=1/\log(1+i),$ $i=1,...,10000.$ Observe that all but all but the components $\h d_1,$ $\h d_4$ converge to zero and $\h d_1$ converges to $127$ and $\h d_1+\h d_2+\h d_3$ converges to $257,$ which are near the locations of the true change point parameters. Starting value in the algorithm: $d=(94,56,56,56,56),$ i.e., $\tau=(94,150,206,262,318).$}}
	\label{fig:simannealevolution}
\end{figure}

\begin{rem}  The construction of the surrogate change inducing variable $w_i^*,$ the reparameterizaton of (\ref{model:reparcp}) and (\ref{eq:reparopt}) is required only to avoid coding complexity of the algorithm. In general, a similar simulated annealing approach can be easily developed for directly implementing {\bf Step 1} of Algorithm 1 on the state space $\{-\iny,w_1,...,w_n,\iny\}^{\check N},$ however to avoid redundancy, these details are omitted.
\end{rem}

\subsection{Numerical Results}

The main objective of the monte carlo simulations of this section are to assess the empirical performance of Algorithm 1 of the proposed method, which performs the detection and estimation of change points in the assumed model. We do not perform simulations for Algorithm 3 of the process since this step is an ordinary lasso optimization, whose empirical validity has been established in the literature via an innumerable number of simulations.

In view of the reparameterization described earlier in this section, we consider the data generating process given in (\ref{model:reparcp}). The r.v.'s $\vep_i,$ $w_i$ and $x_i$ are drawn independently satisfying $\vep_i\sim\cN(0,\si_{\vep}^2),$ and $x_i\sim\cN(0,\Si).$ Here, $\Sigma$ is a $p \times p$ matrix with elements $\Sigma_{ij}=\rho^{|i-j|},$ $i,j=1,...,p.$ We set, $\si_{\vep}=1$ and $\rho = 0.5.$ The number of change points $N$ is set to one of $\{0,1,...,4\},$ i.e., we consider one to five segment models. The case of $N=0,$ where no change points are assumed is only a detection problem, as opposed to the remaining cases where the objective is both detection and estimation of change points. The change point parameters are assumed to be equally spaced in $(0,1),$ specifically, we set $\tau_1^{\dagger}=\frac{1}{N+1},\tau_2^{\dagger}= \frac{2}{N+1},...,\tau_N^{\dagger}=\frac{N}{N+1}.$ Simulations are performed for all combinations of the parameters $p\in\{50,175,300\},$ and $n\in\{250,375,500,625\}.$ Note that the total number of parameters to be estimated for each combination of $(p,N)$ is $p(N+1)+N.$ The regression coefficients are set in the following manner. The even numbered regression coefficient vectors $\b^0_{2j}=(1_{1\times 5}, 0_{1\times(p-5)})^T,$ for all $j\ge 0,$ such that $2j\le N,$ and the odd numbered coefficient vectors are chosen as $\b^0_{2j+1}=(0_{1\times 5},1_{1\times 5}, 0_{1\times(p-5)})^T,$ for all $j\ge 0,$ such that $2j+1\le N.$  Here $0_{1\times 5}=(0,...,0)_{1\times 5},$ $1_{1\times 5}=(1,...,1)_{1\times 5},$ and  $0_{1\times(p-5)}=(0,...,0)_{1\times(p-5)}.$ The initial number of change points assumed in {\bf Step 0} of Algorithm 1a are set to $\check N=4,5,6,7$ for $n=250,375,500,625,$ respectively. Finally, the parameters of the simulated annealing optimization are chosen as follows. The total number of iterations performed under simulated annealing is set to $M=10,000,$ the temperature function (over iterations) is set to $T_i=1\big/(temp*\log(1+i)),$ $temp=1.25,$ $i=1,...,M.$ The period of sine curves constructed for zero inflation of the proposal density described in Section \ref{sec:simanneal} is chosen as $a_j=1\big/\big(250+25*(j-1)\big),$ $j=1,...,\check N,$ i.e., the first component of proposal completed $250$ oscillations within the $M$ iterations and each following component has $25$ more oscillations than the previous.

All results are based on 100 monte carlo repetitions. Computations are performed in the software R, \cite{rcite}. All lasso optimizations are performed with the R package `glmnet', developed by \cite{friedmanglmnet}. For reporting our results, we compute monte carlo approximations of the following metrics. On the detection of change points:  Probability of match ${\rm (PrM)}= E{\bf 1}[\tilde N=N],$ Probability of exceeding ${\rm (PrE)}= E{\bf 1}[\tilde N>N],$ and Probability of lower number ${\rm (PrL)}= E{\bf 1}[\tilde N<N],$ ${\rm Bias}(\tilde N)= E(\tilde N-N),$ ${\rm RMSE}(\tilde N)=\big(E(\tilde N- N)^2\big)^{\frac{1}{2}}.$ On the estimation of location of change points conditioned on correct recovery of the number of change points:  ${\rm Bias(L)}=\|E\big(\tilde\tau-\tau\big|\tilde N=N\big)\|_2,$ and ${\rm RMSE(L)}=\|\big(E\big((\tilde\tau-\tau)^2\big|\tilde N=N\big)\big)^{\frac{1}{2}}\|_1.$

\vspace{1mm}
{\textit{Choice of tuning parameters $\la_0,\la_1,\mu$}}: For lasso optimization of {\bf Step 0}, the regularization parameter $\la_0$ is chosen via a 5-fold cross validation (performed internally by the R package `glmnet'). Next, we use a BIC-type criteria to choose the regularizer $\mu$ of {\bf Step 1} of Algorithm 1a. Specifically, let $\h d(\mu)$ represent the solution of (\ref{eq:reparopt}) and $\h\tau(\mu)$ be the corresponding change point solution, then we choose $\mu$ as argument that minimizes the criteria,
\benr
{\rm BIC}(\mu)= \log\big(Q\big(\check N,\h\al,\h\tau(\mu)\big)\big) + c\frac{\|\h d(\mu)\|_0\log n}{n}
\eenr
Here we set $c=10,$ which performs well in all empirically examined cases.

The simulation results for $p=50,175,300$ are reported in Table \ref{tab:p50}, Table \ref{tab:p175} and Table \ref{tab:p300}, respectively. The results are encouraging and supportive of our theoretical findings. For nearly all examined cases, in $\approx 80\%$ of all simulations, the estimated number of change points match exactly with the unknown number of change points. In cases where there is mismatch between $\tilde N$ and $N,$ it can be approximated from ${\rm Bias}(\tilde N)$ and ${\rm RMSE}(\tilde N),$ that the proposed procedure misses the unknown number of change points by $\approx 1$ change point. In these cases of mismatch, it is also observed that under the given settings, $\tilde N$ exceeds $N,$ indicating the BIC selection criteria can be further tightened by increasing the value of the constant chosen in its definition. Additionally, it is also observed from  ${\rm Bias(L)},$ and ${\rm RMSE(L)}$ that the components of $\tilde\tau$ precisely converge toward the locations of the unknown change points, however, as expected some deterioration in accuracy is observed as $p$ increases.

\begin{table}[]
	\caption{Numerical results on the performance of Algorithm 1 in estimating the number of change points $N$ and their locations $\tau^0,$ when $p=50.$}
	\label{tab:p50}
	\begin{tabular}{cccccccccc}
		\hline
		$n$ & $N$ & $\check N$ & PrM  & PrE  & PrL  & Bias(N) & RMSE(N)  & Bias(L)  & RMSE(L)  \\ \hline
		250 & 0   & 4          & 0.82 & 0.18 & 0    & 0.19    & 0.458258 & NA       & NA       \\
		250 & 1   & 4          & 0.94 & 0.06 & 0    & 0.06    & 0.244949 & 8.51E-05 & 0.007449 \\ \hline
		375 & 0   & 5          & 0.93 & 0.07 & 0    & 0.09    & 0.360555 & NA       & NA       \\
		375 & 1   & 5          & 0.97 & 0.03 & 0    & 0.03    & 0.173205 & 0.000137 & 0.005215 \\
		375 & 2   & 5          & 0.95 & 0.05 & 0    & 0.05    & 0.223607 & 0.000465 & 0.013372 \\ \hline
		500 & 0   & 6          & 0.94 & 0.06 & 0    & 0.08    & 0.34641  & NA       & NA       \\
		500 & 1   & 6          & 0.98 & 0.02 & 0    & 0.03    & 0.223607 & 0.000857 & 0.004755 \\
		500 & 2   & 6          & 0.92 & 0.08 & 0    & 0.08    & 0.282843 & 0.000724 & 0.01201  \\
		500 & 3   & 6          & 0.93 & 0.07 & 0    & 0.07    & 0.264575 & 0.001743 & 0.030009 \\ \hline
		625 & 0   & 7          & 0.97 & 0.03 & 0    & 0.04    & 0.244949 & NA       & NA       \\
		625 & 1   & 7          & 0.94 & 0.06 & 0    & 0.06    & 0.244949 & 0.000749 & 0.004155 \\
		625 & 2   & 7          & 0.76 & 0.24 & 0    & 0.26    & 0.547723 & 0.000426 & 0.012903 \\
		625 & 3   & 7          & 0.55 & 0.45 & 0    & 0.51    & 0.806226 & 0.001581 & 0.033614 \\
		625 & 4   & 7          & 0.72 & 0.28 & 0    & 0.3     & 0.583095 & 0.003469 & 0.070702 \\ \hline
	\end{tabular}
\end{table}

\begin{table}[]
	\caption{Numerical results on the performance of Algorithm 1 in estimating the number of change points $N$ and their locations $\tau^0,$ when $p=175.$}
	\label{tab:p175}
	\begin{tabular}{cccccccccc}
		\hline
		$n$ & $N$ & $\check N$ & PrM  & PrE  & PrL  & Bias(N) & RMSE(N)  & Bias(L)  & RMSE(L)  \\ \hline
		250 & 0   & 4          & 0.74 & 0.26 & 0    & 0.28    & 0.565685 & NA       & NA       \\
		250 & 1   & 4          & 0.93 & 0.07 & 0    & 0.07    & 0.264575 & 0.001462 & 0.006941 \\ \hline
		375 & 0   & 5          & 0.87 & 0.13 & 0    & 0.13    & 0.360555 & NA       & NA       \\
		375 & 1   & 5          & 0.95 & 0.05 & 0    & 0.05    & 0.223607 & 0.00073  & 0.006646 \\
		375 & 2   & 5          & 0.97 & 0.03 & 0    & 0.03    & 0.173205 & 0.000505 & 0.013853 \\ \hline
		500 & 0   & 6          & 0.9  & 0.1  & 0    & 0.14    & 0.489898 & NA       & NA       \\
		500 & 1   & 6          & 0.92 & 0.08 & 0    & 0.1     & 0.374166 & 0.000283 & 0.005345 \\
		500 & 2   & 6          & 0.92 & 0.08 & 0    & 0.09    & 0.331662 & 0.000997 & 0.014112 \\
		500 & 3   & 6          & 0.98 & 0.02 & 0    & 0.02    & 0.141421 & 0.002776 & 0.031699 \\ \hline
		625 & 0   & 7          & 0.94 & 0.06 & 0    & 0.08    & 0.34641  & NA       & NA       \\
		625 & 1   & 7          & 0.96 & 0.04 & 0    & 0.04    & 0.2      & 1.58E-20 & 0.004888 \\
		625 & 2   & 7          & 0.73 & 0.27 & 0    & 0.28    & 0.547723 & 0.001232 & 0.015207 \\
		625 & 3   & 7          & 0.56 & 0.44 & 0    & 0.51    & 0.818535 & 0.001971 & 0.031628 \\
		625 & 4   & 7          & 0.77 & 0.23 & 0    & 0.23    & 0.479583 & 0.001937 & 0.076667 \\ \hline
	\end{tabular}
\end{table}

\begin{table}[]
	\caption{Numerical results on the performance of Algorithm 1 in estimating the number of change points $N$ and their locations $\tau^0,$ when $p=300.$}
	\label{tab:p300}
	\begin{tabular}{cccccccccc}
		\hline
		$n$ & $N$ & $\check N$ & PrM  & PrE  & PrL  & Bias(N) & RMSE(N)  & Bias(L)  & RMSE(L)  \\ \hline
		250 & 0   & 4          & 0.65 & 0.35 & 0    & 0.38    & 0.663325 & NA       & NA       \\
		250 & 1   & 4          & 0.93 & 0.07 & 0    & 0.07    & 0.264575 & 0.000344 & 0.006479 \\ \hline
		375 & 0   & 5          & 0.81 & 0.19 & 0    & 0.23    & 0.556776 & NA       & NA       \\
		375 & 1   & 5          & 0.97 & 0.03 & 0    & 0.03    & 0.173205 & 0.000687 & 0.00773  \\
		375 & 2   & 5          & 0.94 & 0.06 & 0    & 0.06    & 0.244949 & 0.00205  & 0.015561 \\ \hline
		500 & 0   & 6          & 0.87 & 0.13 & 0    & 0.16    & 0.469042 & NA       & NA       \\
		500 & 1   & 6          & 0.92 & 0.08 & 0    & 0.09    & 0.331662 & 0.000739 & 0.005421 \\
		500 & 2   & 6          & 0.93 & 0.07 & 0    & 0.07    & 0.264575 & 0.000882 & 0.015368 \\
		500 & 3   & 6          & 0.95 & 0.05 & 0    & 0.05    & 0.223607 & 0.001901 & 0.033745 \\ \hline
		625 & 0   & 7          & 0.86 & 0.14 & 0    & 0.19    & 0.538516 & NA       & NA       \\
		625 & 1   & 7          & 0.92 & 0.08 & 0    & 0.1     & 0.374166 & 0.000313 & 0.005492 \\
		625 & 2   & 7          & 0.82 & 0.18 & 0    & 0.18    & 0.424264 & 0.001554 & 0.015243 \\
		625 & 3   & 7          & 0.53 & 0.47 & 0    & 0.5     & 0.748331 & 0.0013   & 0.030562 \\
		625 & 4   & 7          & 0.77 & 0.23 & 0    & 0.25    & 0.538516 & 0.002367 & 0.069581 \\ \hline
	\end{tabular}
\end{table}

\section{Discussion} Dynamic high dimensional regression models which are characterized via change points, provide an intuitive modelling approach that allows for dynamic behavior of parameters. These models allow for much greater versatility of the assumed model, and consequently a greater fidelity to the data structure. These models have been sparsely used in applications due to gaps in theoretical understanding and a lack of availability of efficient methods for estimation of parameters for such models. This article serves to fill this void. We develop a novel methodology for the detection and estimation of multiple change points in high dimensional linear regression models. The proposed method is theoretically sound and empirically more efficient than methods currently available in the literature. The idea of {\it arbitrary segmentation} is not restricted to regression models and the proposed methodology could potentially be developed for other relevant models such as dynamic networks. Two technical questions remained unanswered. First, what is optimal rate of regularized change point estimates in a high dimensional setting such as the one considered in this article. Second, is there  theoretical validity of a BIC type criteria for the selection of the regularization parameter in the $\ell_0$ regularization considered in this article. However these questions are left open for further investigations.

\bibliography{multiplecp}

\begin{thebibliography}{}

\bibitem[\protect\citename{Atchade \& Bybee, }2017]{atchade2017scalable}
{\sc Atchade, Yves, \& Bybee, Leland}. 2017.
\newblock A scalable algorithm for gaussian graphical models with
  change-points.
\newblock {\em arxiv preprint arxiv:1707.04306}.

\bibitem[\protect\citename{Bai, }1997]{bai1997estimation}
{\sc Bai, Jushan}. 1997.
\newblock Estimation of a change point in multiple regression models.
\newblock {\em Review of economics and statistics}, {\bf 79}(4), 551--563.

\bibitem[\protect\citename{Belloni {\em et~al.\ }\relax,
  }2011]{belloni2011square}
{\sc Belloni, Alexandre, Chernozhukov, Victor, \& Wang, Lie}. 2011.
\newblock Square-root lasso: pivotal recovery of sparse signals via conic
  programming.
\newblock {\em Biometrika}, {\bf 98}(4), 791--806.

\bibitem[\protect\citename{Belloni {\em et~al.\ }\relax,
  }2017a]{belloni2017linear}
{\sc Belloni, Alexandre, Rosenbaum, Mathieu, \& Tsybakov, Alexandre~B.} 2017a.
\newblock Linear and conic programming estimators in high dimensional
  errors-in-variables models.
\newblock {\em Journal of the royal statistical society: Series b (statistical
  methodology)}, {\bf 79}(3), 939--956.

\bibitem[\protect\citename{Belloni {\em et~al.\ }\relax,
  }2017b]{belloni2017pivotal}
{\sc Belloni, Alexandre, Chernozhukov, Victor, Kaul, Abhishek, Rosenbaum,
  Mathieu, \& Tsybakov, Alexandre~B.} 2017b.
\newblock Pivotal estimation via self-normalization for high-dimensional linear
  models with error in variables.
\newblock {\em arxiv preprint arxiv:1708.08353}.

\bibitem[\protect\citename{Bickel {\em et~al.\ }\relax,
  }2009]{bickel2009simultaneous}
{\sc Bickel, Peter~J., Ritov, Ya’acov, Tsybakov, Alexandre~B., {\em et~al.\
  }\relax}. 2009.
\newblock Simultaneous analysis of lasso and dantzig selector.
\newblock {\em The annals of statistics}, {\bf 37}(4), 1705--1732.

\bibitem[\protect\citename{B{\"u}hlmann \& Van De~Geer,
  }2011]{buhlmann2011statistics}
{\sc B{\"u}hlmann, Peter, \& Van De~Geer, Sara}. 2011.
\newblock {\em Statistics for high-dimensional data: methods, theory and
  applications}.
\newblock Springer Science \& Business Media.

\bibitem[\protect\citename{Cho \& Fryzlewicz, }2015]{cho2015multiple}
{\sc Cho, Haeran, \& Fryzlewicz, Piotr}. 2015.
\newblock Multiple-change-point detection for high dimensional time series via
  sparsified binary segmentation.
\newblock {\em Journal of the royal statistical society: Series b (statistical
  methodology)}, {\bf 77}(2), 475--507.

\bibitem[\protect\citename{Ciuperca, }2014]{ciuperca2014model}
{\sc Ciuperca, Gabriela}. 2014.
\newblock Model selection by lasso methods in a change-point model.
\newblock {\em Statistical papers}, {\bf 55}(2), 349--374.

\bibitem[\protect\citename{Durrett, }2010]{durrett2010probability}
{\sc Durrett, Rick}. 2010.
\newblock {\em Probability: theory and examples}.
\newblock Cambridge university press.

\bibitem[\protect\citename{Friedman {\em et~al.\ }\relax,
  }2010]{friedmanglmnet}
{\sc Friedman, Jerome, Hastie, Trevor, \& Tibshirani, Robert}. 2010.
\newblock glmnet: lasso and elastic-net regularized generalized linear models,
  2010b.
\newblock {\em Url http://cran. r-project. org/package= glmnet. r package
  version},  1--1.

\bibitem[\protect\citename{Fryzlewicz, }2014]{fryzlewicz2014wild}
{\sc Fryzlewicz, Piotr}. 2014.
\newblock Wild binary segmentation for multiple change-point detection.
\newblock {\em The annals of statistics}, {\bf 42}(6), 2243--2281.

\bibitem[\protect\citename{Gautier \& Tsybakov, }2011]{gautier2011high}
{\sc Gautier, Eric, \& Tsybakov, Alexandre}. 2011.
\newblock High-dimensional instrumental variables regression and confidence
  sets.
\newblock {\em arxiv preprint arxiv:1105.2454}.

\bibitem[\protect\citename{Gibberd \& Roy, }2017]{gibberd2017multiple}
{\sc Gibberd, Alex~J., \& Roy, Sandipan}. 2017.
\newblock Multiple changepoint estimation in high-dimensional gaussian
  graphical models.
\newblock {\em arxiv preprint arxiv:1712.05786}.

\bibitem[\protect\citename{Hastie {\em et~al.\ }\relax,
  }2015]{hastie2015statistical}
{\sc Hastie, Trevor, Tibshirani, Robert, \& Wainwright, Martin}. 2015.
\newblock {\em Statistical learning with sparsity: the lasso and
  generalizations}.
\newblock CRC press.

\bibitem[\protect\citename{Hinkley, }1969]{hinkley1969inference}
{\sc Hinkley, David~V.} 1969.
\newblock Inference about the intersection in two-phase regression.
\newblock {\em Biometrika}, {\bf 56}(3), 495--504.

\bibitem[\protect\citename{Hinkley, }1970]{hinkley1970inference}
{\sc Hinkley, David~V.} 1970.
\newblock Inference about the change-point in a sequence of random variables.
\newblock {\em Biometrika}.

\bibitem[\protect\citename{Hinkley, }1972]{hinkley1972time}
{\sc Hinkley, David~V.} 1972.
\newblock Time-ordered classification.
\newblock {\em Biometrika}, {\bf 59}(3), 509--523.

\bibitem[\protect\citename{Jandhyala \& Fotopoulos,
  }1999]{jandhyala1999capturing}
{\sc Jandhyala, Venkata~K., \& Fotopoulos, Stergios~B.} 1999.
\newblock Capturing the distributional behaviour of the maximum likelihood
  estimator of a changepoint.
\newblock {\em Biometrika}, {\bf 86}(1), 129--140.

\bibitem[\protect\citename{Jandhyala \& MacNeill, }1997]{jandhyala1997iterated}
{\sc Jandhyala, Venkata~K., \& MacNeill, Ian~B.} 1997.
\newblock Iterated partial sum sequences of regression residuals and tests for
  changepoints with continuity constraints.
\newblock {\em Journal of the royal statistical society: Series b (statistical
  methodology)}, {\bf 59}(1), 147--156.

\bibitem[\protect\citename{Jandhyala {\em et~al.\ }\relax,
  }2013]{jandhyala2013inference}
{\sc Jandhyala, Venkata~K., Fotopoulos, Stergios~B., MacNeill, Ian~B., \& Liu,
  Pengyu}. 2013.
\newblock Inference for single and multiple change-points in time series.
\newblock {\em Journal of time series analysis}, {\bf 34}(4), 423--446.

\bibitem[\protect\citename{Jin {\em et~al.\ }\relax, }2016]{jin2016consistent}
{\sc Jin, Baisuo, Wu, Yuehua, \& Shi, Xiaoping}. 2016.
\newblock Consistent two-stage multiple change-point detection in linear
  models.
\newblock {\em Canadian journal of statistics}, {\bf 44}(2), 161--179.

\bibitem[\protect\citename{Kaul, }2014]{kaul2014lasso}
{\sc Kaul, Abhishek}. 2014.
\newblock Lasso with long memory regression errors.
\newblock {\em Journal of statistical planning and inference}, {\bf 153},
  11--26.

\bibitem[\protect\citename{Kaul \& Koul, }2015]{kaul2015weighted}
{\sc Kaul, Abhishek, \& Koul, Hira~L.} 2015.
\newblock Weighted ℓ1-penalized corrected quantile regression for high
  dimensional measurement error models.
\newblock {\em Journal of multivariate analysis}, {\bf 140}, 72--91.

\bibitem[\protect\citename{Kaul {\em et~al.\ }\relax,
  }2017]{kaul2017structural}
{\sc Kaul, Abhishek, Davidov, Ori, \& Peddada, Shyamal~D.} 2017.
\newblock Structural zeros in high-dimensional data with applications to
  microbiome studies.
\newblock {\em Biostatistics}, {\bf 18}(3), 422--433.

\bibitem[\protect\citename{Kaul {\em et~al.\ }\relax, }2019]{kaul2018parameter}
{\sc Kaul, Abhishek, Jandhyala, Venkata~K., \& Fotopoulos, Stergios~B.} 2019.
\newblock An efficient two step algorithm for high dimensional change point
  regression models without grid search.
\newblock {\em Journal of machine learning research (to appear), arxiv preprint
  arxiv:1805.03719}.

\bibitem[\protect\citename{Koenker \& Mizera, }2014]{koenker2014convex}
{\sc Koenker, Roger, \& Mizera, Ivan}. 2014.
\newblock Convex optimization in r.
\newblock {\em Journal of statistical software}, {\bf 60}(5), 1--23.

\bibitem[\protect\citename{Koul \& Qian, }2002]{koul2002asymptotics}
{\sc Koul, Hira~L., \& Qian, Lianfen}. 2002.
\newblock Asymptotics of maximum likelihood estimator in a two-phase linear
  regression model.
\newblock {\em Journal of statistical planning and inference}, {\bf 108}(1-2),
  99--119.

\bibitem[\protect\citename{Koul {\em et~al.\ }\relax,
  }2003]{koul2003asymptotics}
{\sc Koul, Hira~L., Qian, Lianfen, \& Surgailis, Donatas}. 2003.
\newblock Asymptotics of m-estimators in two-phase linear regression models.
\newblock {\em Stochastic processes and their applications}, {\bf 103}(1),
  123--154.

\bibitem[\protect\citename{Lee {\em et~al.\ }\relax, }2016]{lee2016lasso}
{\sc Lee, Sokbae, Seo, Myung~Hwan, \& Shin, Youngki}. 2016.
\newblock The lasso for high dimensional regression with a possible change
  point.
\newblock {\em Journal of the royal statistical society: Series b (statistical
  methodology)}, {\bf 78}(1), 193--210.

\bibitem[\protect\citename{Lee {\em et~al.\ }\relax, }2018]{lee2018}
{\sc Lee, Sokbae, Liao, Yuan, Seo, Myung~Hwan, \& Shin, Youngki}. 2018.
\newblock Oracle estimation of a change point in high-dimensional quantile
  regression.
\newblock {\em Journal of the american statistical association}, {\bf 0}(0),
  1--11.

\bibitem[\protect\citename{Leonardi \& B{\"u}hlmann,
  }2016]{leonardi2016computationally}
{\sc Leonardi, Florencia, \& B{\"u}hlmann, Peter}. 2016.
\newblock Computationally efficient change point detection for high-dimensional
  regression.
\newblock {\em arxiv preprint arxiv:1601.03704}.

\bibitem[\protect\citename{Loh \& Wainwright, }2012]{loh2012}
{\sc Loh, Po-Ling, \& Wainwright, Martin~J.} 2012.
\newblock High-dimensional regression with noisy and missing data: Provable
  guarantees with nonconvexity.
\newblock {\em Ann. statist.}, {\bf 40}(3), 1637--1664.

\bibitem[\protect\citename{Maurer, }2003]{maurer2003bound}
{\sc Maurer, Andreas}. 2003.
\newblock A bound on the deviation probability for sums of non-negative random
  variables.
\newblock {\em J. inequalities in pure and applied mathematics}, {\bf 4}(1),
  15.

\bibitem[\protect\citename{{R Core Team}, }2017]{rcite}
{\sc {R Core Team}}. 2017.
\newblock {\em R: A language and environment for statistical computing}.
\newblock R Foundation for Statistical Computing, Vienna, Austria.

\bibitem[\protect\citename{Raskutti {\em et~al.\ }\relax,
  }2010]{raskutti2010restricted}
{\sc Raskutti, Garvesh, Wainwright, Martin~J., \& Yu, Bin}. 2010.
\newblock Restricted eigenvalue properties for correlated gaussian designs.
\newblock {\em Journal of machine learning research}, {\bf 11}(Aug),
  2241--2259.

\bibitem[\protect\citename{Raskutti {\em et~al.\ }\relax,
  }2011]{raskutti2011minimax}
{\sc Raskutti, Garvesh, Wainwright, Martin~J., \& Yu, Bin}. 2011.
\newblock Minimax rates of estimation for high-dimensional linear regression
  over $l_q$-balls.
\newblock {\em Ieee transactions on information theory}, {\bf 57}(10),
  6976--6994.

\bibitem[\protect\citename{Robert \& Casella, }2013]{robert2013monte}
{\sc Robert, Christian, \& Casella, George}. 2013.
\newblock {\em Monte carlo statistical methods}.
\newblock Springer Science \& Business Media.

\bibitem[\protect\citename{Roy {\em et~al.\ }\relax, }2017]{roy2017change}
{\sc Roy, Sandipan, Atchad{\'e}, Yves, \& Michailidis, George}. 2017.
\newblock Change point estimation in high dimensional markov random-field
  models.
\newblock {\em Journal of the royal statistical society: Series b (statistical
  methodology)}, {\bf 79}(4), 1187--1206.

\bibitem[\protect\citename{Rudelson \& Zhou, }2012]{rudelson2012reconstruction}
{\sc Rudelson, Mark, \& Zhou, Shuheng}. 2012.
\newblock Reconstruction from anisotropic random measurements.
\newblock {\em Pages  10--1 of:} {\em Conference on learning theory}.

\bibitem[\protect\citename{Sasaki, }1987]{sasaki1987optimization}
{\sc Sasaki, Galen~H.} 1987.
\newblock {\em Optimization by simulated annealing: A time-complexity
  analysis.}
\newblock Tech. rept. ILLINOIS UNIV AT URBANA DEPT OF ELECTRICAL ENGINEERING.

\bibitem[\protect\citename{Tibshirani, }1996]{tibshirani1996regression}
{\sc Tibshirani, Robert}. 1996.
\newblock Regression shrinkage and selection via the lasso.
\newblock {\em Journal of the royal statistical society. series b
  (methodological)},  267--288.

\bibitem[\protect\citename{Tibshirani, }2011]{tibshirani2011regression}
{\sc Tibshirani, Robert}. 2011.
\newblock Regression shrinkage and selection via the lasso: a retrospective.
\newblock {\em Journal of the royal statistical society: Series b (statistical
  methodology)}, {\bf 73}(3), 273--282.

\bibitem[\protect\citename{Vershynin, }2010]{vershynin2010introduction}
{\sc Vershynin, Roman}. 2010.
\newblock Introduction to the non-asymptotic analysis of random matrices.
\newblock {\em arxiv preprint arxiv:1011.3027}.

\bibitem[\protect\citename{Wang \& Samworth, }2018]{wang2018high}
{\sc Wang, Tengyao, \& Samworth, Richard~J.} 2018.
\newblock High dimensional change point estimation via sparse projection.
\newblock {\em Journal of the royal statistical society: Series b (statistical
  methodology)}, {\bf 80}(1), 57--83.

\bibitem[\protect\citename{Ye \& Zhang, }2010]{ye2010rate}
{\sc Ye, Fei, \& Zhang, Cun-Hui}. 2010.
\newblock Rate minimaxity of the lasso and dantzig selector for the lq loss in
  lr balls.
\newblock {\em Journal of machine learning research}, {\bf 11}(Dec),
  3519--3540.

\bibitem[\protect\citename{Zhang {\em et~al.\ }\relax,
  }2015]{zhang2015multiple}
{\sc Zhang, Bingwen, Geng, Jun, \& Lai, Lifeng}. 2015.
\newblock Multiple change-points estimation in linear regression models via
  sparse group lasso.
\newblock {\em Ieee trans. signal processing}, {\bf 63}(9), 2209--2224.

\bibitem[\protect\citename{Zhao \& Yu, }2006]{zhao2006model}
{\sc Zhao, Peng, \& Yu, Bin}. 2006.
\newblock On model selection consistency of lasso.
\newblock {\em Journal of machine learning research}, {\bf 7}(Nov), 2541--2563.

\bibitem[\protect\citename{Zou, }2006]{zou2006adaptive}
{\sc Zou, Hui}. 2006.
\newblock The adaptive lasso and its oracle properties.
\newblock {\em Journal of the american statistical association}, {\bf
  101}(476), 1418--1429.

\end{thebibliography}
\bibliographystyle{\style}

\pagebreak

\setcounter{page}{1}

\begin{center}
	{\sc Supplementary Materials for ``Detection and estimation of parameters in high dimensional multiple change point regression models via $\ell_1\big/\ell_0$ regularization and discrete optimization"}
\end{center}	

\appendix

\section{Proofs of Section 3}

\vspace{2mm}
\begin{proof}[Proof of Lemma \ref{lem:indicatorbound}]
	
	We begin by proving Part (i) of this lemma. Since $\bar\R$ is compact under the metric $\Phi(\cdot),$ divide the space $\bar\R$ into $l=1/2u_n$ closed intervals (disjoint except at the boundaries), each of length $2u_n.$ Let $\tau_1,...\tau_{l}$ be fixed points which represent the centres of these intervals. We shall show that the following bound holds,
	\benr\label{eq:ballbound}
	\max_{j=1,...,l}\sup_{\substack{\tau\in\bar\R;\\\tau\in\cB(\tau_j,u_n)}}\frac{1}{n}\sti \z_i(\tau)\le c_u\max\Big\{\frac{\log p}{n}, u_n\Big\},
	\eenr
	with probability at least $1-c_1\exp(-c_2\log p),$ for $n$ sufficiently large. Assuming (\ref{eq:ballbound}), observe that any $\tau_a,\tau_b\in\bar\R$ satisfying $d(\tau_a,\tau_b)\le u_n,$ must lie in atmost two adjacent intervals $\cB(\tau_j,u_n)\cup\cB(\tau_{j+1},u_n),$ for some $j=1,...,l-1.$ This implies that
	\benr
	\sup_{\substack{\tau_a,\tau_b\in\bar\R;\\d(\tau_a,\tau_b)\le u_n}}\frac{1}{n}\sti \z_i(\tau_a,\tau_b)\le 2\max_{j=1,...,l}\sup_{\substack{\tau\in\bar\R;\\\tau\in\cB(\tau_j,u_n)}}\frac{1}{n}\sti \z_i(\tau)\le c_u\max\Big\{\frac{\log p}{n}, u_n\Big\},\nn
	\eenr
	with probability at least $1-c_1\exp(-c_2\log p).$ Thus to prove part (i), it only remains to prove (\ref{eq:ballbound}), this is done in the following. Consider a fixed $j\in\{1,...,l\}$ and
	let $\tau_a>\tau_j$ be a boundary point on the right of $\tau_j,$ such that $d(\tau_j,\tau_a)=u_n.$ Then note that $p_n:=E\z_i(\tau_a,\tau_j)=d(\tau_a,\tau_j).$ Since $\z_i(\tau_a,\tau_b),$ $i=1,...,n$ are Bernoulli r.v.'s, hence for any $s>0,$ the moment generating function is given by $E\exp\big(s\z_i(\tau_a,\tau_j)\big)=q_n+p_n\exp(s),$ where $q_n=1-p_n.$ Applying the Chernoff Inequality, we obtain,
	\benr
	P\big(\sti \z_i(\tau_a,\tau_j) > t+np_n\big)&=&P\big(e^{\sti s\z_i(\tau_a,\tau_j)}> e^{(st+snp_n)}\big)\nn\\
	&\le& e^{-s(t+np_n)}[q_n+p_n e^s]^n.\nn
	\eenr
	Now, in order to show,
	\benr\lel{l1enn1}
	\frac{1}{n}\sti \z_i(\tau_a,\tau_j)\le c_u\max\Big\{\frac{\log p}{n}, u_n\Big\}
	\eenr
	with probability at least $1-c_1\exp(-c_2\log p),$ we divide the argument into two cases. First, when $d(\tau_a,\tau_j)\ge c\log p/n,$ for some constant $c>0.$ In this case, upon choosing $t=nd(\tau_a,\tau_j)$ we obtain,
	\benr
	P\big(\sti \z_i(\tau_a,\tau_j)> 2nd(\tau_j,\tau_a)\big) \le e^{[- 2snd(\tau_a,\tau_j)]}[1+(d(\tau_a,\tau_j))(e^s-1)]^n.\nn
	\eenr
	Using the deterministic inequality $(1+x)^k\le \exp(kx),$ for any $k,x>0,$ we obtain that
	\benr
	P\big(\sti\z_i(\tau_a,\tau_j)> 2nd(\tau_a,\tau_j)\big) \le  e^{- 2snd(\tau_a,\tau_j)}e^{(e^{s}-1)nd(\tau_a,\tau_j)}  \le  e^{-c_2\log p}.\nn
	\eenr
	The inequality to the right follows by choosing $s=\log 2,$ which maximizes the function $f(s)=2s-e^s+1$ and provides a positive value at the maximum, and by using the restriction $d(\tau_a,\tau_j)\ge c\log p/n.$ Next, when $d(\tau_a,\tau_)< c\log p/n.$ Here choose $t=c\log p$ to obtain,
	\benr\lel{l1e1}
	P\big(\sti \z_i(\tau_a,\tau_j)> c\log p+ nd(\tau_a,\tau_j)\big)\hspace{1.5in}\nn\\
	\le e^{[-sc\log p - snd(\tau_a,\tau_j)]}[1+(d(\tau_a,\tau_j))(e^s-1)]^n.
	\eenr
	Calling upon the inequality $(1+x)^k\le \exp(kx),$ for any $k,x>0,$ we can bound the RHS of (\r{l1e1}) from above by $\exp\big[-s c\log p+(e^s-s-1) \log p\big].$ Now  $s=\log(1+c)$ provides a positive value at the maximum, since it maximizes $f(s)=(1+c)s-e^s+1.$ Then for any $c>0,$ we obtain,
	\benr
	P\big(\sti\z_i(\tau_a,\tau_j)> c\log p+ nd(\tau_a,\tau_j)\big) &\le & e^{-c_2\log p}. \nn
	\eenr
	Upon combining both cases, (\r{l1enn1}) follows by noting $d(\tau_a,\tau_j)=u_n. $
	
	Now repeating the same argument for a fixed boundary point $\tau_b$ on the left of $\tau_j,$ such that $d(\tau_b,\tau_j)=u_n,$ and applying a union bound we obtain,
	\benr\lel{l1e2}
	\max_{\tau\in\{\tau_a,\tau_b\}}\frac{1}{n}\sti \z_i(\tau,\tau_j)\le c_u\max\Big\{\frac{\log p}{n}, u_n\Big\}
	\eenr
	with probability at least $1- c_1\exp(-c_2\log p).$	In order to show that (\r{l1enn1}) holds uniformly over $\cB(\tau_j,u_n).$ For this, we begin by noting that for any $\tau\in\cB(\tau_j,u_n),$ where $\tau>\tau_j$ we have
	$\z_i(\tau,\tau_j)={\bf 1}[w_i\in(\tau_j,\tau)]\le {\bf 1}\big[w_i\in(\tau_j,\tau_a)\big].$ Similarly for any $\tau\in \cT(\tau_j,u_n)$ where $\tau<\tau_j$ we have $\z_i(\tau)\le {\bf 1}\big[w_i\in (\tau_{b},\tau_j)\big].$ Thus
	\benr\lel{l1e3}
	\hspace{1cm}\sup_{\tau\in \cB(\tau_j,u_n)} \frac{1}{n}\sti \z_i(\tau,\tau_j)\le \max_{\tau\in\{\tau_a,\tau_b\}}\frac{1}{n}\sti \z_i(\tau,\tau_j)\le c_u\Big\{\frac{\log p}{n},u_n\Big\}.
	\eenr
	with probability at least $1-c_1\exp(-c_2\log p).$ Combining the bound (\ref{l1e3}) over all $j=1,...,l$ using a union bound, we obtain (\ref{eq:ballbound}) with probability at least $1-c_1(2u_n)^{-1}\exp(-c_2\log p).$ Finally since by assumption $\log (u_n^{-1})= O(\log p)$ therefore, (\ref{eq:ballbound}) holds with probability at least  $1-c_1\exp(-c_2\log p),$ for $n$ sufficiently large. This completes the proof of Part (i).
	
	The proof of Part (ii) proceeds with a similar idea as Part (i). Divide the space $\bar\R$ into $l=2/v_n$ closed intervals (disjoint except at the boundaries), each of length $v_n/2.$ Let $\tau_1,...\tau_{l}$ be fixed points which represent the centres of these intervals. We shall show that,
	\benr\label{eq:ballboundlower}
	\min_{j=1,...,l}\inf_{\substack{\tau\in\bar\R;\\\tau\in\cB(\tau_j,v_n/4)}}\frac{1}{n}\sti \z_i(\tau,\tau_j)\ge c_uv_n,
	\eenr
	with probability at least $1-c_1\exp(-c_2\log p),$ for $n$ sufficiently large. Assuming (\ref{eq:ballboundlower}), observe that, at least one interval $\cB(\tau_j,v_n/2),$ $j=1,...,l$ will be contained in the interval between any two $\tau_a,\tau_b\in\bar\R$ satisfying $d(\tau_a,\tau_b)\ge v_n.$ This implies that
	\benr
	\inf_{\substack{\tau_a,\tau_b\in\bar\R;\\d(\tau_a,\tau_b)\ge v_n}}\frac{1}{n}\sti \z_i(\tau_a,\tau_b)\ge \min_{j=1,...,l}\inf_{\substack{\tau\in\bar\R;\\\tau\in\cB(\tau_j,v_n/4)}}\frac{1}{n}\sti \z_i(\tau,\tau_j)\ge c_uv_n\nn
	\eenr
	with probability at least $1-c_1\exp(-c_2\log p).$ Thus to prove part (i), it only remains to prove (\ref{eq:ballboundlower}). For this purpose, we use a lower bound for sums of non-negative r.v.s' stated in Lemma \r{lem:maurer}. This result was originally proved by \cite{maurer2003bound}. For a fixed right boundary point $\tau_a>\tau_j$ such that $d(\tau_a,\tau_j)=v_n/4,$ set $t=v_n$ in Lemma \r{lem:maurer}. Then we have
	\benr
	P\Big(\frac{1}{n}\sti \z_i(\tau_a,\tau_j)\le v_n\Big)\le \exp\Big(-4\frac{n^2v_n^2}{nv_n}\Big)\le c_1\exp(-c_2\log p),\nn
	\eenr
	where the last inequality follows from $v_n\ge c\log p/n.$  We obtain the same bound applying a similar argument for the left boundary point $\tau_b<\tau_{j}$ such that $d(\tau_{b},\tau_j)=v_n/4.$ Now applying an elementary union bound we obtain
	\benr\lel{l1e4}
	\hspace{0.5cm}P\Big(\min_{\tau\in\{\tau_a,\tau_b\}}\frac{1}{n}\sti \z_i(\tau,\tau_j)\ge c_u v_n\Big)\ge 1-c_1\exp(-c_2\log p).
	\eenr
	In order to obtain uniformity over $\tau\in \big\{\tau;\, d(\tau,\tau_j)\ge v_n/4\big\}$ note that for $\tau>\tau_j,$ we have $\z_i(\tau,\tau_j)={\bf 1}\big[w_i\in(\tau_j,\tau)\big]\ge {\bf 1}[w_i\in(\tau_j,\tau_a]]$ and for any $\tau<\tau_j,$ we have $\z_i(\tau,\tau_j)={\bf 1}[w_i\in(\tau_b,\tau_j)]\ge {\bf 1}[w_i\in[\tau_b,\tau_j)].$ This implies that
	\benr\lel{l1e5}
	\inf_{\substack{\tau\in\bar\R;\\ d(\tau,\tau_j)\ge v_n}}\frac{1}{n}\sti \z_i(\tau,\tau_j)\ge \min_{\tau\in\{\tau_a,\tau_b\}}\frac{1}{n}\sti \z_i(\tau,\tau_j)\ge c_uv_v.
	\eenr
	with probability at least $1-c_1\exp(-c_2\log p).$ Finally, (\ref{eq:ballboundlower}) follows by using a union bound over all $j=1,...,N$ and recalling that by assumption $v_n\ge c\log p/ n$ and therefore $\log (v_n^{-1})=O(\log p).$ This complete the proof of Lemma \ref{lem:indicatorbound}.
\end{proof}

\vspace{2mm}
\begin{proof}[Proof of Lemma \ref{lem:restrictedeigen}]
	In the following let $n^w:=n^w(\tau_a,\tau_b).$ To prove Part (i) note that,
	\benr\label{eq:equality}
	\hspace{1cm}\inf_{\substack{\tau_a,\tau_b\in\bar\R;\\d(\tau_a,\tau_b)\ge v_n}}\inf_{\delta\in \A} \frac{1}{n}\sum_{i\in n^w}\delta^T x_ix_i^T \delta = \inf_{\substack{\tau_a,\tau_b\in\bar\R;\\d(\tau_a,\tau_b)\ge v_n}}\frac{|n^w|}{n}\inf_{\delta\in\A}\frac{1}{|n^w|}\sum_{i\in n^w}\delta^T x_ix_i^T \delta
	\eenr
	Let $P_w(\cdot)$ represent the conditional probability $P(\cdot|w),$ where $w=(w_1,...,w_n)^T.$ Recalling that $w$ is independent of $x,\vep,$ by assumption D(iv) and applying Lemma \ref{lem:durrett} and Lemma \ref{lem:lw} we obtain,
	\benr\label{eq:conditionalprob1}
	P_w\Big(\inf_{\delta\in\A}\frac{1}{|n^w|}\sum_{i\in n^w} \delta^Tx_ix_i^T\delta\ge \ka\|\delta\|_2^2-c_uc_m\frac{\log p}{|n^w|}\|\delta\|_1^2\Big)\ge\hspace{1cm}\nn\\
	1-c_1\exp(-c_2\log p)
	\eenr
	Since the probability in the RHS of (\ref{eq:conditionalprob1}) is free of $w,$ taking expectations on both sides yields,
	\benr\label{eq:unconditionalprob1}
	P\Big(\inf_{\delta\in\A}\frac{1}{|n^w|}\sum_{i\in n^w} \delta^Tx_ix_i^T\delta\ge \ka\|\delta\|_2^2-c_uc_m\frac{\log p}{|n^w|}\|\delta\|_1^2\Big)\ge\hspace{1cm}\nn\\
	1-c_1\exp(-c_2\log p)
	\eenr
	Recall from Part (ii) of Lemma \ref{lem:indicatorbound} that $\inf_{d(\tau_a,\tau_b)\ge v_n}|n^w|/n\ge c_uv_n,$ with probability at least $1-c_1\exp(-c_2\log p).$  Also, since $\delta\in\A,$ hence $\|\delta\|_1^2\le c_us\|\delta\|_2^2.$ Combining these results with (\ref{eq:unconditionalprob1}) and substituting in (\ref{eq:equality}) we obtain,
	\benr
	\inf_{\substack{\tau_a,\tau_b\in\bar\R;\\d(\tau_a,\tau_b)\ge v_n}}\inf_{\delta\in \A} \frac{1}{n}\sum_{i\in n^w}\delta^T x_ix_i^T \delta &\ge& c_uc_mv_n\|\delta\|_2^2-c_uc_m\frac{s\log p}{n}\|\delta\|_2^2\nn\\
	&\ge& c_uc_mv_n\|\delta\|_2^2,\nn
	\eenr
	with probability at least $1-c_1\exp(-c_2\log p).$ Here the final inequality follows since by assumption $v_n\ge cs\log p\big/n.$ This completes the proof of Part (i). The proof of Part (ii) and Part (iii) are very similar to Part (i) and thus only key steps are provided. To prove part (ii), proceed as in Part (i) to obtain,
	\benr\label{eq:unconditionalprob2}
	P\Big(\inf_{\delta\in\A_2}\frac{1}{|n^w|}\sum_{i\in n^w} \delta^Tx_ix_i^T\delta\ge \ka\|\delta\|_2^2-c_uc_m\frac{\log p}{|n^w|}\|\delta\|_1^2\Big)\ge\hspace{1cm}\nn\\
	1-c_1\exp(-c_2\log p)
	\eenr
	In this case since $\delta\in\A_2,$ hence $\|\delta\|_1^2\le c_us(\|\delta\|_2^2+\xi_{\max}^2).$ Substituting this result in (\ref{eq:unconditionalprob2}) and proceeding as in Part (i) yields,
	\benr
	\inf_{\substack{\tau_a,\tau_b\in\bar\R;\\d(\tau_a,\tau_b)\ge v_n}}\inf_{\delta\in \A_2} \frac{1}{n}\sum_{i\in n^w}\delta^T x_ix_i^T \delta &\ge& c_uc_mv_n\|\delta\|_2^2-c_uc_m\frac{s\log p}{n}\|\delta\|_2^2-\frac{\xi_{\max}^2s\log p}{n}\nn\\
	&\ge& c_uc_mv_n\|\delta\|_2^2-\frac{\xi_{\max}^2s\log p}{n},\nn
	\eenr
	with probability at least $1-c_1\exp(-c_2\log p).$ This completes the proof of Part (ii). To prove Part (iii), note that
	\benr\lel{eq:equalitysup}
	\sup_{\substack{\tau_a,\tau_b\in\bar\R;\\ d(\tau_a,\tau_b)\le u_n}}\sup_{\delta\in\A}\frac{1}{n}\sum_{i\in n_w} \delta^Tx_ix_i^T\delta=\sup_{\substack{\tau_a,\tau_b\in\bar\R;\\ d(\tau_a,\tau_b)\le u_n}}\frac{|n_w|}{n}\sup_{\delta\in\A}\frac{1}{|n_w|}\sum_{i\in n_w} \delta^Tx_ix_i^T\delta\nn
	\eenr
	Now, from Part (i) of Lemma \ref{lem:indicatorbound} we have that $\sup_{d(\tau_a,\tau_b)\le u_n}|n^w|/n\le c_u\max\{\log p/n ,u_n\}.$ Proceeding via the conditional probability argument described for Part (i) leads to,
	\benr
	\sup_{\substack{\tau_a,\tau_b\in\bar\R;\\ d(\tau_a,\tau_b)\le u_n}}\sup_{\delta\in\A} \frac{1}{n}\sum_{i\in n^w}\delta^T x_ix_i^T \delta &\le& c_uc_m\|\delta\|_2^2 \max\Big\{\frac{\log p}{n}, u_n\Big\} + c_uc_m\frac{s\log p}{n}\|\delta\|_2^2\nn\\
	&\le& c_uc_m\|\delta\|_2^2 \max\Big\{\frac{s\log p}{n},u_n\Big\}\nn
	\eenr
	with probability at least $1-c_1\exp(-c_2\log p).$ This completes the proof of Lemma \ref{lem:restrictedeigen}.
\end{proof}

\vspace{2mm}
\begin{proof}[Proof of Theorem \ref{thm:initialrate}]
	First consider the case where, $\tau_a,\tau_b\in \cC_j^1.$ Then a simple algebraic manipulation of the basic inequality $\frac{1}{n}Q^*(\h\al,\tau_a,\tau_b)+\la\|\h\al\|_1\le \frac{1}{n}Q^*(\b^0_{(j-1)},\tau_a,\tau_b)+\la\|\b^0_{(j-1)}\|_1$ yields,
	\benr\label{eq:basicinq}
	\frac{1}{n}\sum_{i\in n^w(\tau_a,\tau_b)} \|x_i^T(\h\al-\b^{0}_{(j-1)})\|_2^2 + \la_0\|\al\|_1\le\hspace{0.5in}\nn\\
	\Big|\frac{2}{n}\sum_{i\in n^w(\tau_a,\tau_b)} \tilde\vep_ix_i^T(\h\al-\b^{0}_{(j-1)})\Big|+ \la_0\|\b^0_{(j)}\|_1.
	\eenr
	Here $\tilde\vep_i=y_i-x_i^T\b^0_{(j-1)}.$ Note that $\tilde\vep_i$ may or may not be the same as $\vep_i$ depending on the index $i.$  Also, we have the following bound,
	\benr
	\frac{1}{n}\Big\|\sum_{i\in n^w(\tau_a,\tau_b)} \tilde\vep_ix_i^T\Big\|_{\iny}\hspace{4in}\nn\\
	\le \frac{1}{n}\Big\|\sum_{i\in n^w(\tau_{j-1}^0,\tau_j^0)} \vep_ix_i^T\Big\|_{\iny} +\frac{1}{n}\Big\|\sum_{i\in n^w(\tau_a,\tau_{j-1}^0)} \tilde\vep_ix_i^T\Big\|_{\iny}+\frac{1}{n}\Big\|\sum_{i\in n^w(\tau_j^0,\tau_b)} \tilde\vep_ix_i^T\Big\|_{\iny}\hspace{1cm}\nn\\
	\le \frac{1}{n}\Big\|\sum_{i\in n^w(\tau_{j-1}^0,\tau_j^0)} \vep_ix_i^T\Big\|_{\iny}+\frac{1}{n}\Big\|\sum_{i\in n^w(\tau_{j-1}^0,\tau_j^0)} \vep_ix_i^T\Big\|_{\iny}
	+\frac{1}{n}\Big\|\sum_{i\in n^w(\tau_{j-1}^0,\tau_j^0)} \vep_ix_i^T\Big\|_{\iny}\hspace{0.75cm}\nn\\
	+\frac{1}{n}\Big\|\sum_{i\in n^w(\tau_a,\tau_{j-1}^0)} (\b_{(j-1)}^0-\b_{(j-2)}^0)^Tx_ix_i^T \Big\|_{\iny}+\frac{1}{n}\Big\|\sum_{i\in n^w(\tau_j^0,\tau_b)} (\b_{(j)}^0-\b_{(j-1)}^0)^Tx_ix_i^T \Big\|_{\iny}\nn\\
	\le c_uc_m\sqrt{\frac{\log p}{n}} + c_uc_m\sqrt{\frac{\log p}{n}}\max\Big\{\sqrt{\frac{\log p}{n}},\sqrt{u_n}\Big\}+c_uc_m\max\Big\{\frac{\xi_{\max}\log p}{n},\xi_{\max}u_n\Big\}\nn\\
	\le c_uc_m\max\Big\{\sqrt{\frac{\log p}{n}},\xi_{\max}u_n\Big\}=\la.\hspace{3.1in}\nn
	\eenr
	The second to last inequality here follows by applying the bounds provided in Lemma \ref{lem:crossbounds}. Substituting the bound of the final inequality in (\ref{eq:basicinq}), and choosing $\la_0=2\la,$ yields the relation $\|\h\al_{S^c}\|_1\le 3\|(\h\al-\b^0_{(j-1)})_S\|_1,$ consequently the vector $\h\al-\b^0_{(j-1)}\in\A.$ Thus the first two inequalities of Lemma \ref{lem:restrictedeigen} are now applicable. From (\ref{eq:basicinq}) and an application of Part (i) Lemma \ref{lem:restrictedeigen} with $v_n=l_{\min}$ we can obtain,
	\benr
	c_uc_ml_{\min}\|\h\al-\b^0_{j-1}\|_2^2\le \sqrt{s} \la \|\h\al-\b^0_{j-1}\|_2,\nn
	\eenr
	which directly implies that $\|\h\al-\b^0_{j-1}\|_2\le \sqrt{s}\la\big/l_{\min}.$ To obtain the $\ell_1$ bound, recall that since $\h\al-\b^0_{(j-1)}\in\A,$ hence $\|\h\al-\b^0_{(j-1)}\|_1\le \sqrt{s}\|\h\al-\b^0_{(j-1)}\|_2.$ To complete the proof of this case, note that all bounds in the above arguments hold uniformly over any $\tau_a,\tau_b\in\cC^1_j,$ with probability at least $1-c_1\exp(-c_2\log p).$ The cases of $\tau_a,\tau_b\in\cC_j^2,$ $\cC^3_j$ and $\cC^4_j$ can be proved similarly. The final statement of the lemma follows by applying a union bound.
\end{proof}

\vspace{2mm}
\begin{proof}[Proof of Corollary \ref{cor:intialrate}]
	The proof of this result is a direct consequence of Theorem \ref{thm:initialrate}. Observe that by Condition A(i) and A(ii), the initial change point vector $\check\tau=(\check\tau_1,\check\tau_2,...,\check\tau_{\check N})^T$ satisfies the following. For any $j=1,...,N$ the pair $\tau_{m_{j}-1},\tau_{m_j}$ lies in either $\cC^1_{j},$ or $\cC^2_{j}$ as defined in Theorem \ref{thm:initialrate}. Part (i) of this corollary follows by applying Theorem \ref{thm:initialrate}. Similarly, Part (ii) follows by noting that for any $j\in \cT^c,$ the pair $\check\tau_{j-1},\check \tau_j$ belongs to either $\cC_{k_j}^3$ or $\cC_{k_j}^4.$ This completes the proof of this corollary.
\end{proof}

\begin{rem}
	(Additional notation used in the Proof of Lemma \ref{lem:ulowerbound}):  Recall that the set $\cT=\{m_0,m_1,...,m_{N},m_{N+1}\}$ (defined in Condition A) is the subset of indices of $\{0,1,2....,\check N+1\},$ such that the initial change point $\check\tau_{m_j}$ lies in a ${\check u}$-neighborhood of $\tau^0_j.$ In the proof to follow, we use the notation $\sum_{m_{j-1}<l<m_j},$ to represent the sum over all possible indices $l$ which lie between $m_{j-1}$ and $m_j.$ For example, let $N=2,$ $\check N=5$ and consider any $\tau\in\bar\R^{\check N}$ in the orientation described in the following Figure \ref{fig:appendixorientation}.
	
	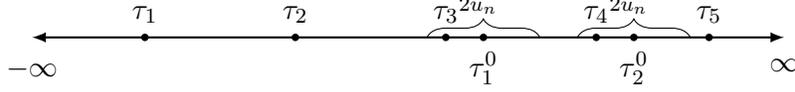
\begin{figure}[H]
		\centering{
			\begin{tikzpicture}
			\draw[black,thick,latex-latex] (0,0) -- (10,0)
			node[pos=0,label=below:\textcolor{black}{$-\iny$}]{}
			node[pos=0.6,mynode,fill=black,label=below:\textcolor{black}{$\tau_1^0$}]{}
			node[pos=0.8,mynode,fill=black,label=below:\textcolor{black}{$\tau_2^0$}]{}
			node[pos=0.15,mynode,fill=black,label=above:\textcolor{black}{$\tau_1$}]{}
			node[pos=0.35,mynode,fill=black,label=above:\textcolor{black}{$\tau_2$}]{}
			node[pos=0.55,mynode,fill=black,label=above:\textcolor{black}{$\tau_3$}]{}
			node[pos=0.75,mynode,fill=black,label=above:\textcolor{black}{$\tau_4$}]{}
			node[pos=0.9,mynode,fill=black,label=above:\textcolor{black}{$\tau_5$}]{}
			node[pos=1,label=below:\textcolor{black}{$\iny$}]{};
			\draw[decorate,decoration={brace,amplitude=7pt}] (5.25,0) --node[above left=5pt and -9pt]{$\scriptstyle{2u_n}$} (6.75,0);
			\draw[decorate,decoration={brace,amplitude=7pt}] (7.25,0) --node[above left=5pt and -9pt]{$\scriptstyle{2u_n}$} (8.75,0);
			\end{tikzpicture}
			\caption{\footnotesize{A possible orientation of initializers $\check\tau\in\R^{5},$ where $N=2.$}}
			\label{fig:appendixorientation}}
	\end{figure}
	Then, $\cT=\{m_0,m_1,m_2,m_3\}=\{0,3,4,6\},$ and for $j=1,$ we denote by,
	\benr
	\sum_{m_{j-1}<l<m_j}Q^*(\al_{(l-1)},\tau_{l-1},\tau_l)= Q^*(\al_{0},\tau_{0},\tau_1)+Q^*(\al_{1},\tau_{1},\tau_2)\nn
	\eenr
\end{rem}

\begin{rem} (Useful observation utilized in the Proof of Lemma \ref{lem:ulowerbound}): Consider the following decomposition of the $\ell_0$ regularizing term in {\bf Step 1} of Algorithm 1. For any $\tau\in\R^{\check N}$ such that, $\tau\in\cG(u_n,v_n,\cK,\cK_2),$ and $\tau^*$ as defined in (\ref{def:taustar}), we have that,
	\benr\label{eq:l0decomp}
	\sum_{j=1}^{\check N}\Big(\|d(\tau_{j-1},\tau_j)\|_0-\|d(\tau_{j-1}^*,\tau_j^*)\|_0\Big)=\sum_{j\in\cT^c}\|d(\tau_{j-1},\tau_j)\|_0\nn\\
	+\sum_{j=1}^{N}\Big(\|d(\tau_{h_j-1},\tau_{h_j})\|_0-\|d(\tau_{h_j-1}^*,\tau_{h_j}^*)\|_0\Big)\nn
	\eenr
\end{rem}

\vspace{2mm}
\begin{proof}[Proof of Lemma \ref{lem:ulowerbound}] Consider any $\tau\in\cG(u_n,v_n,\cK).$ The proof to follow relies in part on an algebraic manipulation of $\cU^*(\check N,\h\al,\tau)$ defined in (\ref{def:calliustar}), which in turn requires a decomposition of the least squares loss $Q(\check N, \al,\tau).$ This decomposition of the least squares loss depends on the orientation of $\tau,$ and in the following we assume a specific orientation of $\tau,$ such that $\tau_{h_j-1}\le\tau^0_{j}\le \tau_{h_j}\le\tau_{j+1},$ $j=1,...,N.$ While assuming this orientation does lead to a loss in generality in the sense that it does not include all possible $\tau,$ however it can be observed that any orientation of $\tau$ shall lead to the same lower bound, this can be verified by following the same argument as below, however with a correspondingly different decomposition of the least squares loss.  We also refer to Lemma 4.1 of \cite{kaul2018parameter}, which provides a similar result in the special case with $\check N=1,$ for further intuition as to how the same bound persists under any other orientation. In the following, for any $\al_{(j)}\in\R^{p},$ $j=0,...,\check N,$ let $\al$ represent the concatenation of $\al_{(j)}'$s. Then consider,
	\benr\label{eq:Qtau}
	Q(\check N,\al,\tau)&=&\frac{1}{n}\sum_{j=1}^{\check N+1}Q^*(\al_{(j-1)},\tau_{j-1},\tau_j)=\frac{1}{n}\sum_{j=1}^{N+1}\sum_{h_{j-1}<l\le h_j}Q^*(\al_{(l-1)},\tau_{l-1},\tau_l)\nn\\
	&=&\frac{1}{n}\sum_{j=1}^{N+1}\sum_{h_{j-1}<l< h_j}Q^*(\al_{(l-1)},\tau_{l-1},\tau_l)+\frac{1}{n}\sum_{j=1}^{N+1}Q^*(\al_{(h_j-1)},\tau_{h_j-1},\tau_{h_j})\nn\\
	&=&\frac{1}{n}\sum_{j=1}^{N+1}\sum_{m_{j-1}<l< h_j}Q^*(\al_{(l-1)},\tau_{l-1},\tau_l)+\frac{1}{n}\sum_{j=1}^{N+1}Q^*(\al_{(h_j-1)},\tau_{h_j-1},\tau_{j}^0)\nn\\
	&&+\frac{1}{n}\sum_{j=1}^{N}Q^*(\al_{(h_j-1)},\tau_{j}^0,\tau_{h_j}).\nn
	\eenr
	Now, recall the definition of $\tau^*$ from (\ref{def:taustar}) and note that,
	\benr\label{eq:Qtaustar}
	Q(\check N,\al,\tau^*)&=&\frac{1}{n}\sum_{j=1}^{\check N+1}Q^*(\al_{(j-1)},\tau_{j-1}^*,\tau_j^*)=\frac{1}{n}\sum_{j=1}^{N+1}Q^*(\al_{(h_j-1)},\tau_{h_j-1}^*,\tau_{h_j}^*)\nn\\
	&=&\frac{1}{n}\sum_{j=1}^{N+1}\sum_{m_{j-1}<l< h_j}Q^*(\al_{(h_j-1)},\tau_{l-1},\tau_{l})+\frac{1}{n}\sum_{j=1}^NQ(\al_{(h_{j+1}-1)},\tau_{j}^0,\tau_{h_j})\nn\\
	&&+\frac{1}{n}\sum_{j=1}^{N+1}Q^*(\al_{(h_j-1)},\tau_{h_j-1},\tau_{j}^0).\nn
	\eenr
	Substituting the above expressions for $Q(\check N,\al,\tau)$ and $Q(\check N,\al,\tau^*)$ in the definition of $\cU^*(\check N, \al,\tau)$ given in (\ref{def:calliustar}), we obtain,
	\benr
	\cU^*(\check N, \al,\tau)&=&Q(\check N, \al,\tau)-Q(\check N, \al,\tau^*)\nn\\
	&=& \frac{1}{n}\sum_{j=1}^{N+1}\sum_{m_{j-1}<l< h_j}Q^*(\al_{(l-1)},\tau_{l-1},\tau_l)+\frac{1}{n}\sum_{j=1}^{N}Q^*(\al_{(h_j-1)},\tau_{j}^0,\tau_{h_j})\nn\\
	&&-\frac{1}{n}\sum_{j=1}^NQ(\al_{(h_{j+1}-1)},\tau_{j}^0,\tau_{h_j})-\frac{1}{n}\sum_{j=1}^{N+1}\sum_{m_{j-1}<l< h_j}Q^*(\al_{(h_j-1)},\tau_{l-1},\tau_{l})\nn\\
	&:=&(T1)+(T2)-(T3)-(T4)\nn
	\eenr
	Further simplifying terms (T1)-(T4) we obtain,
	\benr
	T1&=&\frac{1}{n}\sum_{j=1}^{N+1}\sum_{m_{j-1}<l< h_j}Q^*(\al_{(l-1)},\tau_{l-1},\tau_l)=\frac{1}{n}\sum_{j=1}^{N+1}\sum_{m_{j-1}<l< h_j}\sum_{\tau_{l-1}<w_i<\tau_{l}}\big(y_i-x_i^T\al_{(l-1)}\big)^2\nn\\
	&=& \frac{1}{n}\sum_{j=1}^{N+1}\sum_{m_{j-1}<l< h_j}\sum_{\tau_{l-1}<w_i<\tau_{l}} \vep_i^2-\frac{2}{n}\sum_{j=1}^{N+1}\sum_{m_{j-1}<l< h_j}\sum_{\tau_{l-1}<w_i<\tau_{l}}\vep_ix_i^T(\al_{(l-1)}-\b^0_{(j-1)}) \nn\\
	&&+\frac{1}{n}\sum_{j=1}^{N+1}\sum_{m_{j-1}<l< h_j}\sum_{\tau_{l-1}<w_i<\tau_{l}}(\al_{(l-1)}-\b^0_{(j-1)})^Tx_ix_i^T(\al_{(l-1)}-\b^0_{(j-1)})\nn
	\eenr
	
	\benr
	T2&=&\frac{1}{n}\sum_{j=1}^{N}Q^*(\al_{(h_j-1)},\tau_{j}^0,\tau_{h_j})=\frac{1}{n}\sum_{j=1}^{N}\sum_{i\in n^w(\tau^0_j,\tau_{h_j})}\big(y_i-x_i^T\al_{(h_j-1)}\big)^2\nn\\
	&=&\frac{1}{n}\sum_{j=1}^{N}\sum_{i\in n^w(\tau^0_j,\tau_{h_j})}\big(\vep_i-x_i^T(\al_{(h_j-1)}-\b^0_{(j)})\big)^2\nn\\
	&=& \frac{1}{n}\sum_{j=1}^{N}\sum_{i\in n^w(\tau^0_j,\tau_{h_j})} \vep_i^2 - \frac{2}{n}\sum_{j=1}^{N}\sum_{i\in n^w(\tau^0_j,\tau_{h_j})}\vep_ix_i^T(\al_{(h_j-1)}-\b^0_{(j)})\nn\\
	&&+\frac{1}{n}\sum_{j=1}^{N}\sum_{i\in n^w(\tau^0_j,\tau_{h_j})}(\al_{(h_j-1)}-\b^0_{(j)})^Tx_ix_i^T(\al_{(h_j-1)}-\b^0_{(j)})\nn
	\eenr
	
	\benr
	T3&=&\frac{1}{n}\sum_{j=1}^NQ(\al_{(h_{j+1}-1)},\tau_{j}^0,\tau_{h_j})\nn\\
	&=&\frac{1}{n}\sum_{j=1}^{N}\sum_{i\in n^w(\tau^0_j,\tau_{h_j})} \vep_i^2 - \frac{2}{n}\sum_{j=1}^{N}\sum_{i\in n^w(\tau^0_j,\tau_{h_j})}\vep_ix_i^T(\al_{(h_{j+1}-1)}-\b^0_{(j)})\nn\\
	&&+\frac{1}{n}\sum_{j=1}^{N}\sum_{i\in n^w(\tau^0_j,\tau_{h_j})}(\al_{(h_{j+1}-1)}-\b^0_{(j)})^Tx_ix_i^T(\al_{(h_{j+1}-1)}-\b^0_{(j)})\nn
	\eenr
	
	\benr
	T4&=&\frac{1}{n}\sum_{j=1}^{N+1}\sum_{m_{j-1}<l< h_j}Q^*(\al_{(h_j-1)},\tau_{l-1},\tau_{l})\nn\\
	&=&\frac{1}{n}\sum_{j=1}^{N+1}\sum_{m_{j-1}<l< h_j}\sum_{\tau_{l-1}<w_i<\tau_{l}}\big(y_i-x_i^T\al_{(h_j-1)}\big)^2\nn\\
	&=&\frac{1}{n}\sum_{j=1}^{N+1}\sum_{m_{j-1}<l< h_j}\sum_{\tau_{l-1}<w_i<\tau_{l}} \vep_i^2\nn\\
	&&-\frac{2}{n}\sum_{j=1}^{N+1}\sum_{m_{j-1}<l< h_j}\sum_{\tau_{l-1}<w_i<\tau_{l}}\vep_ix_i^T(\al_{(h_j-1)}-\b^0_{(j-1)}) \nn\\
	&&+\frac{1}{n}\sum_{j=1}^{N+1}\sum_{m_{j-1}<l< h_j}\sum_{\tau_{l-1}<w_i<\tau_{l}}(\al_{(h_j-1)}-\b^0_{(j-1)})^Tx_ix_i^T(\al_{(h_j-1)}-\b^0_{(j-1)})\nn
	\eenr
	Substituting the above expressions for terms $(T1)-(T4)$ back in the expression for $\cU^*(\check N,\al,\tau),$ while also noting that all terms involving $\vep_i^2$ cancel each other, we obtain,
	
	\benr
	\cU^*(\check N,\al,\tau)&=&\frac{1}{n}\sum_{j=1}^{N}\sum_{i\in n^w(\tau^0_j,\tau_{h_j})}(\al_{(h_j-1)}-\b^0_{(j)})^Tx_ix_i^T(\al_{(h_j-1)}-\b^0_{(j)})\nn\\
	&&+\frac{1}{n}\sum_{j=1}^{N+1}\sum_{m_{j-1}<l< h_j}\sum_{\tau_{l-1}<w_i<\tau_{l}}(\al_{(l-1)}-\b^0_{j-1})^Tx_ix_i^T(\al_{(l-1)}-\b^0_{(j-1)})\nn\\
	&&-\frac{1}{n}\sum_{j=1}^{N}\sum_{i\in n^w(\tau^0_j,\tau_{h_j})}(\al_{(h_{j+1}-1)}-\b^0_{(j)})^Tx_ix_i^T(\al_{(h_{j+1}-1)}-\b^0_{(j)})\nn\\
	&&-\frac{1}{n}\sum_{j=1}^{N+1}\sum_{m_{j-1}<l< h_j}\sum_{\tau_{l-1}<w_i<\tau_{l}}(\al_{(h_j-1)}-\b^0_{(j-1)})^Tx_ix_i^T(\al_{(h_j-1)}-\b^0_{(j-1)})\nn\\
	&&-\frac{2}{n}\sum_{j=1}^{N+1}\sum_{m_{j-1}<l< h_j}\sum_{\tau_{l-1}<w_i<\tau_{l}}\vep_ix_i^T(\al_{(l-1)}-\b^0_{(j-1)})\nn\\
	&&-\frac{2}{n}\sum_{j=1}^{N}\sum_{i\in n^w(\tau^0_j,\tau_{h_j})}\vep_ix_i^T(\al_{(h_j-1)}-\b^0_{(j)})\nn\\
	&&+\frac{2}{n}\sum_{j=1}^{N}\sum_{i\in n^w(\tau^0_j,\tau_{h_j})}\vep_ix_i^T(\al_{(h_{j+1}-1)}-\b^0_{(j)})\nn\\
	&&+\frac{2}{n}\sum_{j=1}^{N+1}\sum_{m_{j-1}<l< h_j}\sum_{\tau_{l-1}<w_i<\tau_{l}}\vep_ix_i^T(\al_{(h_j-1)}-\b^0_{(j-1)}) \nn \\
	&:=& (R1)+(R2)-(R3)-(R4)-(R5)-(R6)+(R7)+(R8)\nn
	\eenr
	Here, the terms $(R1),(R3),(R6),(R7)$ are non-zero only when $N\ge 1.$ In the case where $N=0,$ these four terms will be identically zero. Also note that $R2\ge 0,$ since it is a quadratic form. Observe that when $\cU^*(\check N,\al,\tau)$ is evaluated at $\h\al,$  and at any $\tau\in\cG(u_n,v_n,\cK),$ the following uniform bounds for the terms $(R1)-(R8)$ hold,  each with probability at least $1-c_1N\exp(-c_2\log p).$ These bounds for terms $(R1)-(R8)$ follow from applications of Lemma \ref{lem:restrictedeigen}, Lemma \ref{lem:crossbounds} and Corollary \ref{cor:intialrate}. Details pertaining to the derivations of these bounds are discussed in detail in Lemma \ref{lem:ulowersupportargument} in Appendix B of the supplementary materials.	
	
	\benr
	R4&=&\frac{1}{n}\sum_{j=1}^{N+1}\sum_{m_{j-1}<l< h_j}\sum_{\tau_{l-1}<w_i<\tau_{l}}(\al_{(h_j-1)}-\b^0_{(j-1)})^Tx_ix_i^T(\al_{(h_j-1)}-\b^0_{(j-1)})\nn\\
	&\le&  \sum_{j=1}^{N+1}\sum_{m_{j-1}<l< h_j}r_n^2 \le c_uc_m|\cK|r_n^2\nn\\
	|R5|&\le& \frac{2}{n}\sum_{j=1}^{N+1}\sum_{m_{j-1}<l< h_j}\Big|\sum_{\tau_{l-1}<w_i<\tau_{l}}\vep_ix_i^T(\al_{(l-1)}-\b^0_{(j-1)})\Big|\nn\\
	&\le&  2\frac{2}{n}\sum_{j=1}^{N+1}\sum_{m_{j-1}<l< h_j}\sqrt{\frac{\log p}{n}}\sqrt{s}r_n \le c_uc_m|\cK|\sqrt{\frac{s\log p}{n}}r_n\nn\\
	|R8|&\le&\frac{2}{n}\sum_{j=1}^{N+1}\sum_{m_{j-1}<l< h_j}\Big|\sum_{\tau_{l-1}<w_i<\tau_{l}}\vep_ix_i^T(\al_{(h_j-1)}-\b^0_{(j-1)})\Big|\nn\\
	&\le& 2\sum_{j=1}^{N+1}\sum_{m_{j-1}<l< m_j}\sqrt{\frac{\log p}{n}}\sqrt{s}r_n \le c_uc_m|\cK|\sqrt{\frac{s\log p}{n}}r_n\nn
	\eenr
	
	Next, consider the following two subcases. In the first subcase, assume $N=0,$ In this subcase $R1=R3=R6=R7=0.$ Thus, combining the bounds for $(R1)-(R8),$ we obtain for this subcase,
	\benr
	\inf_{\tau\in\cG}\cU^*(\check N,\h\al,\tau)\ge -c_uc_m|\cK|r_n^2- c_uc_m|\cK|\sqrt{\frac{s\log p}{n}}r_n\nn
	\eenr
	In the second subcase, where $N\ge 1,$ we have,
	\benr
	R1&=&\frac{1}{n}\sum_{j=1}^{N}\sum_{i\in n^w(\tau^0_j,\tau_{h_j})}(\h\al_{(h_j-1)}-\b^0_{(j)})^Tx_ix_i^T(\h\al_{(h_j-1)}-\b^0_{(j)})\nn\\
	&\ge&  c_uc_m\xi_{\min}^2v_{n}-c_uc_m\frac{\xi_{\max}^2s\log p}{n}\nn\\
	R3&=&\frac{1}{n}\sum_{j=1}^{N}\sum_{i\in n^w(\tau^0_j,\tau_{h_j})}(\h\al_{(h_{j+1}-1)}-\b^0_{(j)})^Tx_ix_i^T(\h\al_{(h_{j+1}-1)}-\b^0_{(j)})\nn\\
	&\le&  c_uc_mr_n^2\sum_{j=1}^{N}\max\Big\{\frac{s\log p}{n},u_{nj}\Big\}\nn
	\eenr
	
	\benr
	|R6|&\le&\frac{2}{n}\sum_{j=1}^{N}\Big|\sum_{i\in n^w(\tau^0_j,\tau_{h_j})}\vep_ix_i^T(\h\al_{(h_j-1)}-\b^0_{(j)})\Big|\nn\\
	&\le& c_uc_m\xi_{\max}\sqrt{\frac{\log p}{n}}\sum_{j=1}^{N}\max\Big\{\sqrt{\frac{\log p}{n}},\sqrt{u_{nj}}\Big\}\nn\\
	|R7|&\le&\frac{2}{n}\sum_{j=1}^{N}\Big|\sum_{i\in n^w(\tau^0_j,\tau_{h_j})}\vep_ix_i^T(\h\al_{(h_{j+1}-1)}-\b^0_{(j)})\Big|\nn\\
	&\le&c_uc_m r_n\sqrt{\frac{\log p}{n}}\sum_{j=1}^{N}\max\Big\{\sqrt{\frac{\log p}{n}},\sqrt{u_{nj}}\Big\}.\nn
	\eenr
	Combining the bounds for the terms $(R1)-(R8),$ we obtain,
	\benr\label{eq:finalb}
	\inf_{\tau\in\cG}\cU^*(\check N,\h\al,\tau)\ge c_uc_m\xi_{\min}^2v_{n}-c_uc_m\frac{\xi_{\max}^2s\log p}{n}-c_uc_mr_n^2\sum_{j=1}^{N}\max\Big\{\frac{s\log p}{n},u_{nj}\Big\}\nn\\
	-c_uc_m\xi_{\max}\sqrt{\frac{\log p}{n}}\sum_{j=1}^{N}\max\Big\{\sqrt{\frac{\log p}{n}},\sqrt{u_{nj}}\Big\} -c_uc_m|\cK|r_n^2\nn\\
	- c_uc_m|\cK|\sqrt{\frac{s\log p}{n}}r_n\hspace{2.05in}
	\eenr
	To complete the proof, recall the definition of $\cU(\check N,\h\al,\tau)$ from (\ref{def:calliustar}), and observe from (\ref{eq:l0decomp}), that for $\tau\in\cG$
	\benr
	\cU(\check N,\h\al,\tau)&=&\cU^*(\check N,\h\al,\tau)+\mu|\cK|\nn\\
	&&+ \mu\sum_{j=1}^N\Big(\|d(\tau_{h_j-1},\tau_{h_j})\|_0-\|d(\tau_{h_j-1}^*,\tau_{h_j}^*)\|_0\Big)\nn
	\eenr
	Now, if $u_n$ is such that it converges to zero faster than $l_{\min},$ i.e. $u_n/l_{\min}\to 0,$ then clearly the sign of $d(\tau_{h_j-1},\tau_{h_j})$ will be the same as that of 	$d(\tau_{h_j-1}^*,\tau_{h_j}^*),$ for each $j\in\{1,...,N\},$ and $n$ sufficiently large. The statement of this lemma now follows by combining the above expression with (\ref{eq:finalb}) and using the assumption $\xi_{\min}>c_u.$
\end{proof}


\vspace{2mm}
\begin{proof}[Proof of Theorem \ref{thm:mainresult}] To begin with, note that by Condition B(iii) we have that  $r_n^2/\xi_{\min}^2=o(s\log p/n)^{1/k}.$ We begin by proving Part (i) of this theorem. For this purpose, first consider the case when $N=0.$ In this in this case $\{h_1,....,h_N\}=\emptyset,$ and thus by construction, the sequences $u_n,$ $v_n$ play no role in the set $\cG(\cK):=\cG(u_n,v_n,\cK).$ Now, applying Part (i) of Lemma \ref{lem:ulowerbound}, we obtain,
	\benr
	\inf_{\tau\in\cG}\cU(\check N,\h\al,\tau)\ge \mu|\cK|-c_uc_m|\cK|r_n^2- c_uc_m|\cK|\sqrt{\frac{s\log p}{n}}r_n,\nn
	\eenr
	Now, let if possible $\cK$ be non-empty. Then by the choice of $\mu=c_uc_m\rho(s\log p/n)^{1/k^*},$ where $k^*=\max\{k,2\},$ and $n$ sufficiently large, we have that, $\inf_{\tau\in\cG}\cU(\check N,\h\al,\tau)>0.$ This implies that the optimizer $\h\tau\in \bar\R^{\check N}$ of {\bf Step 1} of Algorithm 1, cannot lie in the set $\cG(\cK),$ for any non-empty set $\cK,$ with probability at least $1-c_1N\exp(-c_2\log p).$ Thus the only remaining possibility is that $\h\tau\in\bar\R^{\check N}$ is such that $\tau_{j-1}=\tau_j,$ for all $l=1,...,\check N,$ i.e., $\h\tau_j=-\iny,$ $j=1,...,N.$ This directly implies that $\h\cT(\h\tau)=\emptyset,$ and consequently $\tilde N=0,$ with probability at least $1-c_1N\exp(-c_2\log p).$ Thus proving the theorem for this case.
	
	Next consider the case $N\ge 1.$ Since the optimization of {\bf Step 1} of Algorithm 1 is over a subset of $\tau\in\bar\R^{\check N},$ therefore any such $\tau$ must satisfy $0\le \sum_{j=1}^{N}d(\tau_{h_j},\tau_j^0)\le u_{n}=N,$ consequently, $\tau\in\cG:=\cG(N,0,\cK),$ for some $\cK\subseteq \cT^c.$ Let $v_{n}\ge Ns\log p/n$ be any positive sequence, then applying Part (ii) of Lemma \ref{lem:ulowerbound} over the collection $\cG(N,v_n,\cK),$ yields the bound,
	\benr
	\inf_{\tau\in\cG}\cU(\check N,\h\al,\tau)&\ge& c_uc_mv_{n}+\mu|\cK|-c_uc_mN\frac{\rho^2s\log p}{n}-\frac{c_uc_m}{(1\vee\xi_{\min}^2)}|\cK|r_n^2\nn\\
	&&-N\frac{c_uc_m}{(1\vee\xi_{\min}^2)}r_n^2- N\frac{c_uc_m\rho}{(1\vee\xi_{\min})}\sqrt{\frac{s\log p}{n}}\nn\\
	&&- \frac{c_uc_m}{(1\vee\xi_{\min}^2)}|\cK|\sqrt{\frac{s\log p}{n}}r_n-  \frac{N\mu}{(1\vee\xi_{\min}^2)},\nn
	\eenr
	with probability at least $1-c_1N\exp(-c_2\log p).$ Now, if we choose $v_n:=v_n^*= c_uc_mN\rho(s\log p/n)^{1/k^*}.$ Then for $n$ sufficiently large we have, $\inf_{\tau\in\cG}\cU^*(\check N,\h\al,\tau)>0.$ This implies that the optimizer $\h\tau$ cannot lie in the set $\cG(N,v_n^*,\cK),$ and thus $\h\tau\in\cG(v_n^*,0,\cK),$ for some $\cK.$ This statement together with Condition C(ii) also implies that all $\h\tau_{h_j}$'s are finite and distinct, thereby implying that $\tilde N\ge N.$ Now, for any non empty $\cK,$ reset $u_n=v_n^*$ and apply Part (ii) of Lemma \ref{lem:ulowerbound} over the collection $\cG(u_n,0,\cK).$ Noting that in this case $F(u_n)=0,$ we obtain,
	\benr	
	\inf_{\tau\in\cG}\cU(\check N,\h\al,\tau)&\ge& \mu|\cK|-c_uc_mN\frac{\rho^2s\log p}{n}-\frac{c_uc_m}{(1\vee\xi_{\min}^2)}|\cK|r_n^2\nn\\
	&&-\frac{c_uc_m}{(1\vee\xi_{\min}^2)}r_n^2u_{n}- \frac{c_uc_m\rho}{(1\vee\xi_{\min})}\sqrt{\frac{s\log p}{n}}\sqrt{Nu_n}\nn\\
	&&- \frac{c_uc_m}{(1\vee\xi_{\min}^2)}|\cK|\sqrt{\frac{s\log p}{n}}r_n,\nn
	\eenr
	Under the choice $\mu=c_uc_m\rho(s\log p/n)^{1/k^*},$ we obtain that $\inf_{\tau\in\cG}\cU^*(\check N,\h\al,\tau)>0,$ for any non-empty set $\cK.$ Consequently implying that $\h\tau\in \cG(u_n,0,\emptyset).$ In other words, there are no finite and distinct interruptions between $\h\tau_{h_j}$'s,  consequently $\tilde N= N,$ with probability at least  $1-c_1N\exp(-c_2\log p).$ This proves Part (i) of this theorem.
	
	The proof of part (ii) relies on applying the above argument to recursively tighten the bound for $\h\tau.$ We have already shown that $\h\tau\in\cG(u_n,0,\emptyset).$ Applying the same lower bound over the collection $\cG(u_n,v_n,\emptyset)$ we obtain,
	\benr
	\inf_{\tau\in\cG}\cU(\check N,\h\al,\tau)&\ge& c_uc_mv_{n}-c_uc_mN\frac{\rho^2s\log p}{n}-\frac{c_uc_m}{(1\vee\xi_{\min}^2)}r_n^2u_{n}\nn\\
	&&- \frac{c_uc_m\rho}{(1\vee\xi_{\min})}\sqrt{\frac{s\log p}{n}}\sqrt{Nu_n}\nn
	\eenr
	Now, upon choosing,
	\benr
	v_n\ge v_n^*:=c_uc_mN\rho^{1+\frac{1}{2}}\Big(\frac{s\log p}{n}\Big)^{a_2},\,\,\,{\rm with}\,\,a_2=\min\big\{\frac{1}{2}+\frac{1}{2k^*},\frac{1}{k^*}+\frac{1}{k^*}\big\}\nn
	\eenr
	with, we obtain that for $n$ large, $\inf_{\tau\in\cG}\cU^*(\check N,\h\al,\tau)>0.$ Thus implying that $\h\tau\in \cG(v_n^*,0,\emptyset),$ i.e., $\sum_{j=1}^Nd(\h\tau_{h_j},\tau^0_j)\le v_n^*,$ with probability at least $1-c_1N\exp(-c_2\log p).$ Note that, by using the above recursive argument we have tightened the desired rate at each step. Continuing these recursions, by resetting $u_n$ to the bound of the previous recursion, and applying Part (ii) of Lemma \ref{lem:ulowerbound} over the collection $\cG(u_n,v_n,\emptyset),$ we can obtain for the $m^{th}$ recursion that,
	\benr
	\sum_{j=1}^Nd(\h\tau_{h_j},\tau^0_j)\le c_uc_m \rho^{b_m}\Big(\frac{s\log p}{n}\Big)^{a_m},\quad{\rm where,}\nn\\
	a_m=\min\Big\{\frac{1}{2}+\frac{a_{m-1}}{2},\,\frac{1}{k^*}+a_{m-1}\Big\},\,\,{\rm and}\,\, b_m=1+\frac{b_{m-1}}{2}\hspace{-1cm},\nn
	\eenr
	additionally $a_1=1/k^*$ and $b_1=1.$ To finish the proof, note that if we continue the above recursions an infinite number of times, we obtain $a_{\iny}=\sum_{m=1}^{\iny}1/2^m=1$ and $b_{\iny}=1+\sum_{m=1}^{\iny}1/2^m=2.$ 	Note that, despite the recursions in the above argument, the probability of the bound obtained after every recursion is maintained to be at least $1-c_1N\exp(-c_2\log p),$ this follows from Remark \r{contain}.  This completes the proof of this theorem. 	
\end{proof}

\vspace{2mm}
\begin{rem}\lel{contain} (Observation utilized in the proof of Theorem \ref{thm:mainresult}): The proof of Theorem \ref{thm:mainresult} relies on recursive application of Lemma \ref{lem:ulowerbound}. This in turn requires recursive application of the bounds of Lemma \ref{lem:ulowersupportargument}, the probability of all bounds holding simultaneously at each recursion being at least $1-c_1N\exp(-c_2\log p).$ Despite these recursions (potentially infinite) the result from the final recursion continues to hold with probability at least $1-c_1N\exp(-c_2\log p).$ To see this, let $u_n\to 0$ be any positive sequence and let $\{a_j\}\to a_{\iny},$ $j\to\iny,$ $0<a_j\le 1,$ be any strictly increasing sequence over $j=1,2,....$ . Then define sequences $u^j_n=u_n^{a_j},$ $j=1,2...$ . Here note that  $u_n^{j+1}=o(u_n^j),$ $j=1,...,$ i.e., each sequence converges to zero faster than the preceding one. Let $\cE_{u^1},\cE_{u^2}...$ be events, each with probability $1-c_1N\exp(-c_2\log p),$ on which the upper bounds of Lemma \ref{lem:ulowerbound} hold for each $u^1_n,u^2_n,...$ respectively. Clearly, on the intersection of events $\cE_{u^1}\cap\cE_{u^2}\cap....,$ all upper bounds of Lemma \ref{lem:ulowersupportargument} hold simultaneously over any sequence $u_n^j,$ $j=1,...,\iny$ Now, note that by the construction of these sequences, and that these are all upper bounds, the following containment holds $\cE_{u^1}\supseteq\cE_{u^2}\supseteq...\supseteq\cE_{u^\iny}.$ This implies that on the event $\cE_{u^\iny}$ all bounds of Lemma \ref{lem:ulowersupportargument} hold simultaneously for any sequence $\{u_n^{j}\},$ $j=1,...,\iny.$ Here $\cE_{u^\iny}$ represents the set corresponding to the sequence $u_{n}^{\iny}=u_n^{a_{\iny}}.$ Also, by a single application of Lemma \ref{lem:ulowersupportargument}, $P(\cE_{u^\iny})\ge 1-c_1\exp(-c_2\log p).$ The same argument can be made for the lower bound of Lemma \ref{lem:ulowersupportargument}, with the direction of the containment switched.
\end{rem}

\vspace{2mm}
\begin{proof}[Proof of Corollary \ref{cor:tildealpha}] First, note that from the result of Theorem \ref{thm:mainresult}, we have that $\tilde N=N$ and $\sum_{j=1}^{N}d(\tilde\tau_j,\tau^0_j)\le N\rho^2s\log p\big/n,$ with probability at least $1-c_1N\exp(-c_2\log p),$ for $n$ sufficiently large. All arguments to follow are restricted to the event where these two results hold. Now by construction of Algorithm 2, the regression estimates $\tilde\al_{(j)},$ $j=0,...,N$ are computed based on the partition yielded by the change point estimate $\tilde \tau.$ Let $u_{nj}:=|\tilde\tau_{j}-\tau^0_j|\vee|\tilde\tau_{j+1}-\tau^0_{j+1}|$ Then,  choosing $\la_{1j}=c_uc_m\max\{\sqrt{\log p/n},\,\xi_{\max}u_{nj}\},$ and applying Theorem \ref{thm:initialrate}, we obtain for each $j=0,...,N,$ that,
	\benr\label{eq:finreg}
	\|\tilde\al_{(j)}-\b_{(j)}^0\|\le c_uc_ms^{\frac{1}{q}}\max\Big\{\sqrt{\frac{\log p}{n}},\,\xi_{\max}u_{nj}\Big\}\Big/l_{\min},
	\eenr
	with probability at least $1-c_1\exp(-c_2\log p).$  Again, by Theorem \ref{thm:mainresult} we have that $\sum_{j=1}^nu_{nj}\le c_uc_m N\rho^2 s\log p/n,$ with probability at least $1-c_1(1\vee N)\exp(-c_2\log p).$ Thus, summing up the bounds in (\ref{eq:finreg}) over $j=0,...,N$ we obtain the statement of the Corollary. 	
\end{proof}	


\section{Auxiliary results}


\vspace{2mm}
\begin{lem}\label{lem:crossbounds} Suppose Condition D and let $u_n$ be any non-negative sequence satisfying $\log (u_n^{-1})=O(\log p).$ Then we have for any fixed $\delta\in\R^p$ that,
	\benr
	&(i)& \sup_{\substack{\tau_a,\tau_b\in\bar\R;\\d(\tau_a,\tau_b)\le u_n}}\Big\|\frac{1}{n}\sum_{i\in n^w(\tau_a,\tau_b)} \delta^T x_ix_i^T \Big\|_{\iny} \le c_u c_{m}\|\delta\|_2\max\Big\{\frac{\log p}{n}, u_n\Big\},\nn\\
	&(ii)& \sup_{\substack{\tau_a,\tau_b\in\bar\R;\\d(\tau_a,\tau_b)\le u_n}}\frac{1}{n}\sum_{i\in n^w(\tau_a,\tau_b)} \delta^T x_ix_i^T \delta \le c_u c_{m} \|\delta\|_2^2\max\Big\{\frac{\log p}{n}, u_n\Big\},\nn\\
	&(iii)& \sup_{\substack{\tau_a,\tau_b\in\bar\R;\\d(\tau_a,\tau_b)\le u_n}}\frac{1}{n}\big\|\sum_{i\in n^w(\tau_a,\tau_b)} \vep_ix_i^T \big\|_{\iny} \le c_u c_{m}\sqrt{\frac{\log p}{n}}\max\Big\{\sqrt{\frac{\log p}{n}}, \sqrt{u_n}\Big\},\nn
	\eenr
	with probability at least $1- c_1\exp(-c_2\log p).$
\end{lem}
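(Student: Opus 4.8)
The plan is to prove all three bounds with a common strategy that parallels the proof of Lemma \ref{lem:indicatorbound}: condition on the change-inducing variables $w=(w_1,\dots,w_n)^T$, use Condition D(iv) so that conditionally the pair $(x_i,\vep_i)$ retains its subgaussian law, control the random cardinality $|n^w(\tau_a,\tau_b)|$ via Lemma \ref{lem:indicatorbound}(i), and then apply a Bernstein-type bound for sums of subexponential variables. Concretely, on the event where Lemma \ref{lem:indicatorbound}(i) holds (probability at least $1-c_1\exp(-c_2\log p)$) I would set $m_n:=c_un\max\{\log p/n,u_n\}$, so that $\sup_{d(\tau_a,\tau_b)\le u_n}|n^w(\tau_a,\tau_b)|\le m_n$. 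Conditioning on such a $w$, as $(\tau_a,\tau_b)$ range over all pairs with $d(\tau_a,\tau_b)\le u_n$, the set $n^w(\tau_a,\tau_b)$ takes at most $O(n^2)$ distinct values (blocks of consecutive order statistics of $w$), each of size at most $m_n$; hence each supremum over the continuum reduces to a maximum over finitely many data-determined index sets.

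For part (ii) the summand $\delta^Tx_ix_i^T\delta=(x_i^T\delta)^2$ is nonnegative, so I can mimic Lemma \ref{lem:indicatorbound} directly: divide $\bar\R$ into $1/(2u_n)$ intervals of length $2u_n$ and use monotonicity of the sum in the window to reduce to fixed boundary points, then apply a one-sided subexponential (chi-square type) concentration to $\frac1n\sum_{i\in n^w}(x_i^T\delta)^2$, whose conditional mean equals $\frac{|n^w|}{n}\delta^T\Si\delta\le \phi\|\delta\|_2^2\,m_n/n$. Bernstein controls the deviation at the same order, and union bounding over the $1/(2u_n)$ intervals costs $\log(1/u_n)=O(\log p)$ in the exponent, which is absorbed by assumption.

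Parts (i) and (iii) are the genuinely new points, because the summands $(x_i^T\delta)x_{ik}$ and $\vep_ix_{ik}$ are signed and monotonicity fails; here I rely on the conditional reduction above. For a fixed index set $S$ of size $\le m_n$ and a fixed coordinate $k$ I would apply Bernstein's inequality to the mean-zero subexponential variables $\vep_ix_{ik}$ (part iii) and $(x_i^T\delta)x_{ik}-(\Si\delta)_k$ (part i). With $|S|\le m_n$ and the target radius $t$, a short computation gives $\min\{n^2t^2/|S|,\,nt\}\gtrsim\log p$ for both choices of $t$, so each tail is $\le c_1\exp(-c_2\log p)$; union bounding over the $p$ coordinates and the $O(n^2)$ index sets is harmless since $\log p+\log n=O(\log p)$. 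In part (i) the nonzero conditional mean $\frac{|n^w|}{n}(\Si\delta)_k$ is bounded in absolute value by $\phi\|\delta\|_2\,m_n/n\le c_uc_m\|\delta\|_2\max\{\log p/n,u_n\}$ via the bounded-eigenvalue Condition D(i), which is exactly the asserted rate, while the centered fluctuation is the Bernstein term.

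The main obstacle I anticipate is precisely handling the supremum over the continuum of $(\tau_a,\tau_b)$ for the signed quantities in (i) and (iii): this is where the nonnegativity shortcut of Lemma \ref{lem:indicatorbound} is unavailable, so I must condition on $w$ and union bound over the polynomially many realizable consecutive-index blocks, tracking that their sizes are uniformly $\le m_n$ on the good event. A secondary bookkeeping point is separating, in part (i), the nonvanishing mean term from the centered fluctuation and verifying that the Bernstein radius matches the stated $\max\{\log p/n,u_n\}$ rate rather than its square root. Finally I would combine the conditional bound (uniform over $w$ on the good event) with the event of Lemma \ref{lem:indicatorbound} by intersecting, which preserves the overall probability $1-c_1\exp(-c_2\log p)$.
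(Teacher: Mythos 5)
Your proposal is correct and follows essentially the same route as the paper's proof: condition on $w$ (via Condition D(iv)), apply a subexponential/Bernstein-type tail bound (the paper uses Lemma \ref{lem:lwstochb}) to the conditional sums, control the window size $|n^w(\tau_a,\tau_b)|$ uniformly through Lemma \ref{lem:indicatorbound}(i), and bound the nonzero mean term in part (i) by the eigenvalue bound on $\Sigma$. The one place you go beyond the paper is in making the uniformity over the continuum of $(\tau_a,\tau_b)$ explicit by union-bounding over the $O(n^2)$ realizable consecutive-index blocks; the paper's proof applies the tail bound for a fixed pair and leaves that reduction implicit, so your extra step is a harmless (since $\log n = O(\log p)$) and indeed welcome tightening rather than a genuinely different argument.
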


\vspace{2mm}
\begin{proof}[Proof of Lemma \ref{lem:crossbounds}]
	We begin with the proof of Part (i). Note that the RHS of the inequality in Part (i) is normalized by the $\ell_2$ norm of $\delta.$ Hence, without loss of generality we can assume $\|\delta\|_2=1.$  In following denote $n^w=n^w(\tau_a,\tau_b).$ Note that if $|n^w|=0$ then Lemma \ref{lem:crossbounds} holds trivially with probability $1,$ thus without loss of generality we shall assume that $|n_w|>0.$ Now, for any fixed $\tau_a,\tau_b\in\bar\R,$ we have
	\benr\label{eq:equality3}
	\Big\|\frac{1}{n}\sum_{i\in n_w} \delta^T x_ix_i^T \Big\|_{\iny} \le \frac{|n_w|}{n}\Big\|\frac{1}{|n_w|}\sum_{i\in n_w} \delta^T x_ix_i^T \Big\|_{\iny}
	\eenr
	Under Condition D(iv) and by properties of conditional expectations (see e.g. Lemma \ref{lem:durrett}), the conditional probability $P_w(\cdotp)= P(\cdotp| w)$ can be bounded by treating $w$ as a constant. Thus,
	\benr
	P_w\Big(\Big\|\frac{\sum_{i\in n_w} \delta^Tx_ix_i^T}{|n_w|}- \delta^T\Sigma\Big\|_{\iny} > t\Big)\le 6p \exp(-c_u|n_w|\min\big\{\frac{t^2}{\si_x^4}, \frac{t}{\si_x^2}\big\})\nn
	\eenr
	where the above probability bound is obtained by an application of Part (ii) of Lemma 14 of Loh and Wainwright (2012): supplementary materials. This lemma is reproduced as Lemma \ref{lem:lwstochb} in this section. Now choosing $t=c_u\max\Big\{\si_x^2\sqrt{\frac{\log p}{|n_w|}}, \si_x\frac{\log p}{|n_w|}\Big\}$ we obtain,
	\benr\label{eq:cond1}
	P_w\left(\Big\|\frac{\sum_{i\in n_w} \delta^Tx_ix_i^T}{|n_w|}\Big\|_{\iny}\le \|\delta^T\Sigma\|_{\iny} + c_u\max\Big\{\si_x^2\sqrt{\frac{\log p}{|n_w|}}, \si_x\frac{\log p}{|n_w|}\Big\}\right)\nn\\
	\ge 1-c_1\exp(-c_2\log p).
	\eenr
	The result in (\ref{eq:cond1}) together with (\ref{eq:equality3}) yields,
	\benr\label{eq:cond2}
	P_w\left(\Big\|\frac{1}{n}\sum_{i\in n_w} \delta^T x_ix_i^T \Big\|_{\iny} \le \frac{|n_w|}{n}\|\delta^T\Si\|_{\iny}+\frac{|n_w|}{n}c_u\max\Big\{\si_x^2\sqrt{\frac{\log p}{|n_w|}}, \si_x\frac{\log p}{|n_w|}\Big\}\right)\nn\\
	\ge 1-c_1\exp(-c_2\log p).\nn
	\eenr
	Taking expectations on both sides and observing that the RHS of the above conditional probability is free of $w,$ we obtain,
	\benr\label{eq:cond3}
	P\left(\Big\|\frac{1}{n}\sum_{ i\in n_w} \delta^T x_ix_i^T \Big\|_{\iny} \le \frac{|n_w|}{n}\|\delta^T\Si\|_{\iny}+\frac{|n_w|}{n}c_u\max\Big\{\si_x^2\sqrt{\frac{\log p}{|n_w|}}, \si_x\frac{\log p}{|n_w|}\Big\}\right)\hspace{-1cm}\nn\\
	\ge 1-c_1\exp(-c_2\log p)
	\eenr
	On the other hand, we have by Part (i) of Lemma \ref{lem:indicatorbound}, that with probability at least $1-c_1\exp(-c_2\log p)$ that $\sup_{d(\tau_a,\tau_b)\le u_n}|n_w|/n\le c_u\max\{\log p/n , u_n\}.$ Also, it is straightforward to see that $\|\delta^T\Si\|_{\iny}\le c_u \phi,$ for some constant $c_u>0.$ Thus with the same probability we have the bound,
	\benr\lel{eq:l2e2}
	\sup_{\tau\in\cT(\tau_{0n},u_n)}\frac{|n_w|}{n}\|\delta^T\Si\|_{\iny}\le c_u\phi\max\Big\{c_a\frac{\log p}{n}, u_n\Big\}.
	\eenr
	Again applying Part (i) of Lemma \ref{lem:indicatorbound} we also have the following bound with probability at least $1-c_1\exp(-c_2\log p),$
	\benr\lel{eq:l2e3}
	\sup_{\substack{\tau_a,\tau_b\in\bar\R;\\d(\tau_a,\tau_b)\le u_n}}\frac{|n_w|}{n}\sqrt{\frac{\log p}{|n_w|}}&\le& c_u\sqrt{\frac{\log p}{n}} \max\Big\{\sqrt{\frac{\log p}{n}}, \sqrt{u_n}\Big\}\nn\\
	&\le& c_u\max\Big\{\frac{\log p}{n}, u_n\Big\}.
	\eenr
	The final inequality follows upon noting that if $\sqrt{\log p/n}\sqrt{u_n}\ge u_n $ then $u_n \le \log p/n.$ Finally also note that $\sup_{d(\tau_a,\tau_b)\le u_n} (|n_w|/n) (\log p/|n_w|)\le \log p/n.$ Part (i) of the lemma follows by combining these results together with the bounds (\ref{eq:l2e2}) and (\ref{eq:l2e3}) in (\ref{eq:cond3}). The proofs of Part (ii) and Part (iii) are similar and are thus omitted.
\end{proof}

\begin{lem} \label{lem:ulowersupportargument}
	(Bounds used in the proof of Lemma \ref{lem:ulowerbound}): Let $\h\al_{(j)},$ $j=0,...,\check N$ be the regression estimates obtained from {\bf Step 1} of Algorithm 1, $\cT$ and $\cT^*$ be as defined in Condition A and (\ref{def:settstar}) respectively and let $\cG:=\cG(u_n,v_n,\cK)$ be as defined in (\ref{def:rtset}). Then assuming the conditions of Lemma \ref{lem:ulowerbound}, the following bounds hold with probability at least $1-c_1(1\vee N)\exp(-c_2\log p),$ for $n$ sufficiently large.
	\benr
	&(i)&\inf_{\tau\in\cG}\frac{1}{n\xi_{\min}^2}\sum_{j=1}^N\sum_{i\in n^w(\tau_{h_j},\tau_j^0)}(\h\al_{(h_j-1)}-\b^0_{(j)})^Tx_ix_i^T(\h \al_{(h_j-1)}-\b^0_{(j)})\ge\nn\\ &&\hspace{3.2in}c_uc_mv_{n}-c_uc_mN\frac{\rho^2s\log p}{n},\nn\\
	&(ii)&\inf_{\tau\in\cG}\frac{1}{n}\sum_{j=1}^{N+1}\sum_{m_{j-1}<l<h_j}\sum_{\tau_{l-1}<w_i<\tau_l}(\h\al_{(l-1)}-\b^0_{(j-1)})^Tx_ix_i^T(\h\al_{(l-1)}-\b^0_{(j-1)}) \ge 0,\nn\\
	&(iii)& \sup_{\tau\in\cG}\frac{1}{n}\sum_{j=1}^N\sum_{i\in n^w(\tau_j^0,\tau_{h_j})}(\h\al_{(h_{j+1}-1)}-\b^0_{(j)})^Tx_ix_i^T(\h\al_{(h_{j+1}-1)}-\b^0_{(j)}) \le \nn\\
	&&\hspace{3.2in}c_uc_mr_n^2 \max\Big\{\frac{Ns\log p}{n}, u_{n}\Big\},\nn\\
	&(iv)&  \sup_{\tau\in\cG}\frac{1}{n}\sum_{j=1}^{N+1}\sum_{m_{j-1}<l<\tau_l}\sum_{\tau_{l-1}<w_i<\tau_l}(\h\al_{(h_{j}-1)}-\b^0_{(j-1)})^Tx_ix_i^T(\h\al_{(h_{j}-1)}-\b^0_{(j-1)}) \le c_uc_m|\cK|r_n^2,\nn\\
	&(v)& \sup_{\tau\in\cG}\frac{2}{n}\sum_{j=1}^{N+1}\sum_{m_{j-1}<l<\tau_l}\sum_{\tau_{l-1}<w_i<\tau_l}\vep_ix_i^T(\h\al_{(l-1)}-\b^0_{(j-1)}) \le c_uc_m |\cK|\sqrt{\frac{s\log p}{n}}r_n,\nn\\
	&(vi)& \sup_{\tau\in\cG}\frac{2}{n}\sum_{j=1}^N\sum_{i\in n^w(\tau_j^0,\tau_{h_j})}\vep_ix_i^T(\h\al_{(h_{j}-1)}-\b^0_{(j)}) \le c_uc_m \xi_{\max}\sqrt{\frac{\log p}{n}}\max\Big\{N\sqrt{\frac{\log p}{n}}, \sqrt{Nu_{n}}\Big\},\nn\\
	&(vii)& \sup_{\tau\in\cG}\frac{2}{n}\sum_{j=1}^N\sum_{i\in n^w(\tau_j^0,\tau_{h_j})}\vep_ix_i^T(\h\al_{(h_{j+1}-1)}-\b^0_{(j)})  \le c_uc_m r_n\sqrt{\frac{\log p}{n}}\max\Big\{N\sqrt{\frac{\log p}{n}}, \sqrt{Nu_{n}}\Big\},\nn\\
	&(viii)& \sup_{\tau\in\cG}\frac{2}{n}\sum_{j=1}^{N+1}\sum_{m_{j-1}<l<\tau_l}\sum_{\tau_{l-1}<w_i<\tau_l}\vep_ix_i^T(\h\al_{(h_j-1)}-\b^0_{(j-1)}) \le c_uc_m|\cK|\sqrt{\frac{s\log p}{n}}r_n. \nn
	\eenr
\end{lem}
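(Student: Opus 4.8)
The plan is to split the eight quantities into the quadratic forms (i)--(iv) and the cross terms (v)--(viii), and in each case reduce to the already-established Lemma~\ref{lem:indicatorbound}, Lemma~\ref{lem:restrictedeigen} and Lemma~\ref{lem:crossbounds}, fed by the $\ell_2/\ell_1$ rates of Corollary~\ref{cor:intialrate}. The device used throughout is that, by Condition D(iv), $(x_i,\vep_i)$ is independent of $w$, so after conditioning on $w$ each window $n^w(\cdot,\cdot)$ is a fixed index set and the inner sums are sums of independent terms; the cardinalities $|n^w(\cdot,\cdot)|$ are controlled uniformly by Lemma~\ref{lem:indicatorbound}, and the definition of $\cG$ supplies $v_n\le\sum_{j=1}^N d(\tau_{h_j},\tau^0_j)\le u_n$, hence $d(\tau^0_j,\tau_{h_j})\le u_n$ for each $j$.

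For the quadratic forms, (ii) is immediate since every summand $\|x_i^T(\h\al_{(l-1)}-\b^0_{(j-1)})\|_2^2\ge 0$. For (i) I would write $\h\al_{(h_j-1)}-\b^0_{(j)}=(\h\al_{(h_j-1)}-\b^0_{(j-1)})+(\b^0_{(j-1)}-\b^0_{(j)})$: the first term lies in the cone $\A$ by the basic inequality of Theorem~\ref{thm:initialrate} and the second is $s$-sparse of norm $\ge\xi_{\min}$, so the difference lies in $\A_2$ and has $\|\cdot\|_2\ge c_u\xi_{\min}-r_n\ge c_u\xi_{\min}$. Applying the $\A_2$ lower restricted-eigenvalue bound Lemma~\ref{lem:restrictedeigen}(ii) on each window $n^w(\tau_{h_j},\tau^0_j)$ (and using nonnegativity when $d(\tau_{h_j},\tau^0_j)<s\log p/n$) gives, per $j$, a lower bound $c_uc_m\xi_{\min}^2 d(\tau_{h_j},\tau^0_j)-c_uc_m\xi_{\max}^2 s\log p/n$; summing, dividing by $\xi_{\min}^2$, and using $\sum_j d(\tau_{h_j},\tau^0_j)\ge v_n$ with $\rho=\xi_{\max}/\xi_{\min}$ yields (i). Bounds (iii),(iv) are the matching upper bounds: in (iii) the vector $\h\al_{(h_{j+1}-1)}-\b^0_{(j)}$ has $\|\cdot\|_2\le r_n$, so the upper restricted-eigenvalue bound Lemma~\ref{lem:restrictedeigen}(iii) applied at scale $d(\tau^0_j,\tau_{h_j})$ gives $c_uc_m r_n^2\max\{s\log p/n,\,d(\tau^0_j,\tau_{h_j})\}$ per $j$, and $\sum_j\max\{s\log p/n,\,d(\tau^0_j,\tau_{h_j})\}\le Ns\log p/n+u_n$; in (iv) each of the $|\cK|$ distinct interruption windows contributes a quadratic form in $\h\al_{(h_j-1)}-\b^0_{(j-1)}$ (norm $\le r_n$) bounded crudely by $\lambda_{\max}\big(\frac{1}{n}\sum_{i=1}^n x_ix_i^T\big)\,r_n^2\le c_m\phi\, r_n^2$, the remaining windows being empty.

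For the cross terms the decisive point is the choice of factorization after conditioning on $w$. For (v) and (viii), over the interruption windows, I use $\frac{1}{n}\big|\sum_{i\in n^w}\vep_i x_i^T\delta\big|\le\big\|\frac{1}{n}\sum_{i\in n^w}\vep_i x_i^T\big\|_\iny\,\|\delta\|_1$, bound the $\ell_\iny$ factor by the uniform-over-intervals cross bound $c_uc_m\sqrt{\log p/n}$ (the maximal inequality underlying Lemma~\ref{lem:crossbounds}(iii), which for arbitrary intervals follows from a Doob-type bound on the prefix sums), and use $\|\delta\|_1\le\sqrt s\,\|\delta\|_2\le\sqrt s\, r_n$; summing over the $|\cK|$ nonempty windows gives $c_uc_m|\cK|\sqrt{s\log p/n}\,r_n$. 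For (vi),(vii) the stated bounds carry no $\sqrt s$, so a naive Hölder step is too lossy and I instead project onto the single, $\ell_2$-concentrated direction. In (vi) I split $\delta=(\b^0_{(j-1)}-\b^0_{(j)})+(\h\al_{(h_j-1)}-\b^0_{(j-1)})$ and treat the fixed signal part by the one-dimensional analogue of Lemma~\ref{lem:crossbounds}(iii) (Bernstein in the direction $\b^0_{(j-1)}-\b^0_{(j)}$ of norm $\le\xi_{\max}$, with no union over coordinates and hence no $\sqrt s$), giving per $j$ a bound $c_uc_m\xi_{\max}\sqrt{\log p/n}\max\{\sqrt{\log p/n},\sqrt{d(\tau^0_j,\tau_{h_j})}\}$; then $\sum_j\sqrt{d(\tau^0_j,\tau_{h_j})}\le\sqrt{N\sum_j d(\tau^0_j,\tau_{h_j})}\le\sqrt{Nu_n}$ produces the factor $\max\{N\sqrt{\log p/n},\sqrt{Nu_n}\}$. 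In (vii) the direction $\h\al_{(h_{j+1}-1)}-\b^0_{(j)}$ is random, but $\h\al_{(h_{j+1}-1)}$ is computed in Step~0 on an initial segment lying to the right of $\tau^0_j$, disjoint from the window $n^w(\tau^0_j,\tau_{h_j})$; conditioning on $w$ and on the data in that segment renders the direction fixed and independent of $\{(\vep_i,x_i):i\in n^w(\tau^0_j,\tau_{h_j})\}$, so the same one-dimensional Bernstein bound applies with $\|\cdot\|_2\le r_n$ in place of $\xi_{\max}$.

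Finally, all eight estimates hold on the intersection of $O(N)$ (respectively $|\cK|\le\check N=O(N)$) events, each of probability at least $1-c_1\exp(-c_2\log p)$, so a union bound gives the claimed $1-c_1(1\vee N)\exp(-c_2\log p)$. The main obstacle I anticipate is precisely the $\sqrt s$-free control of (vi) and (vii): it forces the argument into the concentrated $\ell_2$ direction rather than the $\ell_\iny/\ell_1$ Hölder split, which in turn rests on the conditional independence between the initial estimates $\h\al$ and the change-point windows, and hence on careful bookkeeping of the orientation indices $h_j,m_j$ to ensure the relevant segments are disjoint. Securing all of these bounds \emph{uniformly} over $\tau\in\cG$ rather than pointwise is the remaining technical burden, and is exactly what the uniform (over intervals) versions packaged in Lemmas~\ref{lem:indicatorbound}--\ref{lem:crossbounds} are designed to deliver.
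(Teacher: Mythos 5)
Your proposal tracks the paper's argument closely where the paper actually supplies detail, namely part (i): the same decomposition $\h\al_{(h_j-1)}-\b^0_{(j)}=(\h\al_{(h_j-1)}-\b^0_{(j-1)})+(\b^0_{(j-1)}-\b^0_{(j)})$ placing the error vector in $\A_2$ with $\ell_2$ norm of order $\xi_{\min}$, the same per-window application of Lemma \ref{lem:restrictedeigen}(ii), and the same summation using $\sum_j d(\tau_{h_j},\tau^0_j)\ge v_n$ (the paper handles the small-window issue by splitting the indices according to whether $v_{nj}\ge c_us\log p/n$, you by discarding the nonnegative small windows --- these are equivalent). Parts (ii)--(v) and (viii) also match: nonnegativity, the upper restricted-eigenvalue bound of Lemma \ref{lem:restrictedeigen}(iii), and the H\"older split $\big\|\frac1n\sum\vep_ix_i^T\big\|_{\iny}\|\delta\|_1\le c\sqrt{\log p/n}\cdot\sqrt{s}\,r_n$ are exactly what produce the paper's displayed bounds for $R2$--$R5$ and $R8$.

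Where you genuinely add something is (vi)--(vii). The paper dispatches these with a single sentence (``the bound for $R6$, $R7$ and $R8$ can be obtained by an application of Lemma \ref{lem:crossbounds}''), but a literal application of Lemma \ref{lem:crossbounds}(iii) followed by H\"older would insert a factor $\|\delta\|_1\le c\sqrt{s}\,\xi_{\max}$ and hence an extra $\sqrt{s}$ that the stated bounds do not carry. Your diagnosis --- that one must project onto the single relevant direction and run the scalar analogue of Lemma \ref{lem:crossbounds}(iii), with no union over coordinates, and then use $\sum_j\sqrt{u_{nj}}\le\sqrt{Nu_n}$ --- is the right reading of what the stated bounds require, and is more explicit than anything in the paper. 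Two caveats. First, in (vi) your split leaves the residual cross term $\frac1n\sum\vep_ix_i^T(\h\al_{(h_j-1)}-\b^0_{(j-1)})$ unabsorbed; by H\"older it costs $\sqrt{s}\,r_n$ in place of $\xi_{\max}$, and one must verify from Conditions B--C that $\sqrt{s}\,r_n\lesssim\xi_{\max}$, or else bound the whole direction at once in $\ell_2$. Second, in (vii) the conditional independence you invoke between $\h\al_{(h_{j+1}-1)}$ and the data in $n^w(\tau^0_j,\tau_{h_j})$ is not automatic: $\tau_{h_j}$ ranges over a $u_n$-neighbourhood of $\tau^0_j$ that can extend beyond $\check\tau_{m_j}$ into the initialization segment $(\check\tau_{m_j},\check\tau_{m_j+1}]$ on which $\h\al_{(m_j)}$ is fitted, and $h_{j+1}-1$ can equal $m_j$, so the two index sets can overlap. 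A safer route is to treat the random direction through its deterministic $\ell_2$ bound $r_n$ (conditioning on the high-probability event of Corollary \ref{cor:intialrate}) rather than through independence. Neither caveat is fatal, and both concern steps the paper itself leaves unargued; otherwise your write-up is a faithful, and usefully more detailed, version of the intended proof.
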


\vspace{2mm}
\begin{proof}[Proof of Lemma \ref{lem:ulowersupportargument}] To prove part (i), let $v_{nj}\ge s\log p/n,$ $j=1,...,N$ and $v_n=\sum_{j=1}^N v_{nj}\ge Ns\log p/n,$ then by Part (ii) of \ref{lem:restrictedeigen} we have with probability at least $1-c_1\exp(-c_2\log p)$ that
	\benr
	\inf_{\tau_j;\, d(\tau_j,\tau_j^0)\ge v_{nj}}\inf_{\delta_{(j)}\in\cA_2}\frac{1}{n}\sum_{i\in n^w(\tau_j,\tau_j^0)}\delta_{(j)}^Tx_ix_i^T\delta_{(j)}\ge c_uc_mv_{nj}\|\delta_{(j)}\|_2^2-c_uc_m\frac{\xi_{\max}^2s\log p}{n}.\nn
	\eenr
	Applying this bound for each $j=1,...,N,$ and summing them up, we obtain with probability at least $1-c_1N\exp(-c_2\log p),$
	\benr\label{eq:fullunifb}
	\frac{1}{n}\sum_{j=1}^N\inf_{\tau_j;\, d(\tau_j,\tau_j^0)\ge v_{nj}}\inf_{\delta_{(j)}\in\cA_2}\sum_{i\in n^w(\tau_j,\tau_j^0)}\delta_{(j)}^Tx_ix_i^T\delta_{(j)}\ge\hspace{1.5cm}\nn\\ c_uc_mv_{n}\min_j\|\delta_{(j)}\|_2^2-c_uc_mN\frac{\xi_{\max}^2s\log p}{n}.\hspace{-1.5cm}
	\eenr
	Now let $\delta_{(j)}=(\h \al_{(h_j-1)}-\b^0_{(j)}).$ By the construction of the indices $h_j$ of the index set $\cT^*=\{h_0,h_1,...,h_{N+1}\},$ in (\ref{def:taustar}) we have that $m_{j-1}< h_j\le m_j.$ Consequently, from the proof of Theorem \ref{thm:initialrate} we have that $(\h\al_{(h_j-1)}-\b^0_{(j-1)})\in\A,$ $j=1,...,N$ with probability at least $1-c_1N\exp(-c_2\log p).$ This in turn implies that $(\h\al_{(h_j-1)}-\b^0_{(j)})\in\A_2,$ $j=1,...,N$ with the same probability. Additionally, we have for any $j=1,...,N,$
	\benr\label{eq:cauchyexp}
	\|\delta_{(j)}\|_2^2&=&\|(\b^0_{(j)}-\b^0_{(j-1)})+\h\al_{(h_j-1)}-\b^0_{(j-1)}\|_2^2= \|(\b^0_{(j)}-\b^0_{(j-1)})\|_2^2\nn\\
	&&+\|\h\al_{(h_j-1)}-\b^0_{(j-1)}\|_2^2+\|\b^0_{(j)}-\b^0_{(j-1)}\|_2\|\h\al_{(h_j-1)}-\b^0_{(j-1)}\|_2\nn\\
	&\ge& \xi_{\min}^2- r_n^2-2\xi_{\max}r_n
	\eenr
	with probability at least $1-c_1N\exp(-c_2\log p).$ Applying Condition B(iii) we obtain with the same probability that $\min_{j}\|\delta_{(j)}\|_2^2\big/\xi_{\min}^2\ge 1,$ for $n$ sufficiently large. Substituting these results back in \ref{eq:fullunifb} we obtain,
	\benr\label{eq:unifbtau}
	\frac{1}{n\xi_{\min}^2}\sum_{j=1}^N\inf_{\tau_j;\, d(\tau_j,\tau_j^0)\ge v_{nj}}\sum_{i\in n^w(\tau_j,\tau_j^0)}(\h \al_{(h_j-1)}-\b^0_{(j)})^Tx_ix_i^T(\h \al_{(h_j-1)}-\b^0_{(j)})\ge\hspace{0.25cm}\nn\\ c_uc_mv_{n}-c_uc_mN\frac{\rho^2s\log p}{n}.\hspace{-0.2cm}
	\eenr
	with probability at least $1-c_1N\exp(-c_2\log p),$ and for $n$ sufficiently large. Now, recall the collection $\cG(u_n,v_n,\cK),$ defined for any $v_n\ge Ns\log p/n.$ Note that the sequence $u_n$ and the set $\cK$ are irrelevant for this bound, and by definition of this set we have that $\sum_{j=1}^{N}d(\tau_{h_j},\tau_j^0)\ge v_n\ge c_uNs\log p/n.$ In the case where $v_{nj}\ge c_us\log p/n,$ for each $j=1,...,N,$ clearly, the infimum on the LHS of (\ref{eq:unifbtau}) can be directly replaced with an infimum over the collection $\cG(u_n,v_n,\cK),$ with the corresponding expressions evaluated at $\tau_{h_j}'$s in place of $\tau_j$'s. This follows since the replacement infimum is over a subset of that in (\ref{eq:unifbtau}). In the case where $v_{nj}=o(s\log p/n)$ for one or more $j$'s (W.L.O.G. assume $j=1$). Since this component is of smaller order than $v_n,$ consequently we shall still have that $v_{n,-j}:=\sum_{j\ne1} v_{nj}\ge c_uNs\log p/n,$ for $n$ large, i.e., the ratio of $v_n/v_{n,-j}=O(1).$ Thus by applying all above arguments to only the components $j=1,...,N,$ where $v_{nj}\ge c_us\log p/n,$ we obtain,
	\benr
	\frac{1}{n\xi_{\min}^2}\inf_{\tau\in\cG(u_n,v_n,\cK)}\sum_{j=1}^N\sum_{i\in n^w(\tau_{h_j},\tau_j^0)}(\h\al_{(h_j-1)}-\b^0_{(j)})^Tx_ix_i^T(\h \al_{(h_j-1)}-\b^0_{(j)})\ge\hspace{0.25cm}\nn\\ c_uc_mv_{n}-c_uc_mN\frac{\rho^2s\log p}{n}.\hspace{-0.2cm}\nn
	\eenr
	with probability $1-c_1(1\vee N)\exp(-c_2\log p),$ and for $n$ sufficiently large. This completes the proof of Part (i) of this lemma. The bound $R2\ge 0$ is trivial since it is a quadratic term. The bounds for $R3$ and $R4$ follow directly by an application of Part (iii) of (\ref{lem:restrictedeigen}). The bound for $R6,$ $R7$ and $R8$ can be obtained by an application of Lemma \ref{lem:crossbounds}. This completes the proof of the Lemma.
\end{proof}

\vspace{2mm}
\begin{lem}\label{lem:maurer} Let the $\{X_i\}_{i=1}^m$ be independent random variables, $EX_i^2<\iny,$ $X_i\ge 0.$ Set $S=\sti X_i$ and let $t>0.$ Then
	\benr
	P\Big(ES-S\ge t\Big)\le \exp\Big(\frac{-t^2}{2\sti EX_i^2}\Big)\nn
	\eenr
\end{lem}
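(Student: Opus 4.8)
The plan is to establish this lower-tail bound via the exponential (Chernoff) method applied to $-S$, exploiting the nonnegativity of the $X_i$ through a one-sided quadratic bound on the exponential. For any $s>0$ I would begin from the observation that the event $\{ES-S\ge t\}$ coincides with $\{e^{-sS}\ge e^{s(ES-t)}\}$, so that Markov's inequality yields
\[
P\big(ES-S\ge t\big)\le e^{-s(ES-t)}\,E\big[e^{-sS}\big].
\]
By independence of the $X_i$ this factorizes as $E\big[e^{-sS}\big]=\prod_{i=1}^m E\big[e^{-sX_i}\big]$, reducing the problem to a per-coordinate bound on the moment generating function of $-X_i$.

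The key step, and the only place where nonnegativity is used, is the elementary inequality $e^{-y}\le 1-y+\tfrac12 y^2$ valid for all $y\ge0$. I would verify this by setting $g(y)=1-y+\tfrac12y^2-e^{-y}$ and checking $g(0)=0$, $g'(0)=0$, and $g''(y)=1-e^{-y}\ge0$ for $y\ge0$, so that $g$ is nonnegative on $[0,\infty)$. Applying this with $y=sX_i\ge0$ and taking expectations gives $E\big[e^{-sX_i}\big]\le 1-sEX_i+\tfrac12 s^2 EX_i^2\le \exp\!\big(-sEX_i+\tfrac12 s^2 EX_i^2\big)$, where the last step uses $1+u\le e^u$. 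Multiplying over $i$ produces $E\big[e^{-sS}\big]\le \exp\!\big(-sES+\tfrac12 s^2\sum_{i=1}^m EX_i^2\big)$.

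Substituting back, the $\pm sES$ terms cancel and I am left with $P\big(ES-S\ge t\big)\le \exp\!\big(-st+\tfrac12 s^2\sum_{i=1}^m EX_i^2\big)$ for every $s>0$. The final step is to optimize the free parameter: the exponent is minimized at $s=t/\big(\sum_{i=1}^m EX_i^2\big)$, which yields exactly $-t^2/\big(2\sum_{i=1}^m EX_i^2\big)$ and hence the claimed bound.

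I anticipate no serious obstacle here, as the argument is entirely self-contained; the one subtlety worth flagging is that nonnegativity of the $X_i$ is essential — for general centered variables the one-sided bound $e^{-y}\le 1-y+\tfrac12y^2$ fails when $y<0$, and it is precisely this asymmetry that lets the raw second moment $\sum_{i=1}^m EX_i^2$ (rather than a variance proxy or a range term) appear in the denominator.
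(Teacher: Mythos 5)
Your argument is correct, and the paper itself offers no proof of this lemma: it is imported verbatim as Theorem 1 of Maurer (2003), whose original proof is exactly your Chernoff argument built on the one-sided inequality $e^{-y}\le 1-y+\tfrac{1}{2}y^2$ for $y\ge 0$. The only blemish is a sign slip in your intermediate display: the event $\{ES-S\ge t\}$ equals $\{e^{-sS}\ge e^{s(t-ES)}\}$, so Markov's inequality gives the prefactor $e^{s(ES-t)}$ rather than $e^{-s(ES-t)}$; with that correction the $sES$ terms cancel exactly as you state, and the optimization at $s=t/\sum_{i=1}^m EX_i^2$ yields the claimed bound.
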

This result is as stated in Theorem 1 of Maurer (2003), it provides a lower bound on a sum of positive independent r.v.'s.

\vspace{2mm}
\begin{lem}\label{lem:lw} Let $z_i\in\R^p,$ $i=1,...,n$ be i.i.d subgaussian random vectors with variance parameter $\si_z^2$ and covariance $\Si_z=Ez_iz_i^T.$ Also, let $\la_{\min}(\Si_z)$ and $\la_{\max}(\Si_z)$ be the minimum and maximum eigenvalues of the covariance matrix respectively. Then,
	\benr
	&(i)&\frac{1}{n} \sum_{i=1}^n \delta^Tz_iz_i^T\delta \ge \frac{\la_{\min}(\Si_z)}{2}\|\delta\|_2^2- c_u\la_{\min}(\Si_z)\max\Big\{\frac{\si_z^4}{\la_{\min}^2(\Si_z)},1\Big\} \frac{\log p}{n} \|\delta\|_1^2,\quad \forall \delta\in\R^p, \nn\\
	&(ii)& \frac{1}{n} \sum_{i=1}^n \delta^Tz_iz_i^T\delta \le \frac{3\la_{\max}(\Si_z)}{2}\|\delta\|_2^2+ c_u\la_{\min}(\Si_z)\max\Big\{\frac{\si_z^4}{\la_{\min}^2(\Si_z)},1\Big\} \frac{\log p}{n} \|\delta\|_1^2,\quad \forall \delta\in\R^p, \nn
	\eenr
	with probability at least $1-c_1\exp(-c_2\log p).$
\end{lem}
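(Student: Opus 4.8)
The plan is to reproduce the standard sub-Gaussian restricted eigenvalue argument of \cite{loh2012}; indeed the statement is essentially their deviation bound, so I would either cite it directly or reconstruct it as follows. Since both inequalities are homogeneous of degree two in $\delta$ and the lower and upper bounds are proved by identical machinery (two-sided concentration), I would concentrate on the lower bound (i) and normalize to $\|\delta\|_2=1$. The first ingredient is a pointwise concentration inequality: for a fixed unit vector $\delta$, the projection $\langle z_i,\delta\rangle$ is $\si_z$-sub-Gaussian, so $\langle z_i,\delta\rangle^2$ is sub-exponential and a Bernstein inequality gives
\benr
P\Big(\Big|\frac{1}{n}\sti \langle z_i,\delta\rangle^2-\delta^T\Si_z\delta\Big|\ge t\Big)\le 2\exp\Big(-c_un\min\big\{t^2/\si_z^4,\,t/\si_z^2\big\}\Big).\nn
\eenr

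Next I would promote this to a uniform statement over sparse vectors. Fixing a sparsity level $m\asymp n/\log p$, the set of unit vectors supported on at most $m$ coordinates admits a $\tfrac14$-net of cardinality at most $\binom{p}{m}9^{m}\le\exp(c_um\log p)$; applying the pointwise bound at resolution $t\asymp\la_{\min}(\Si_z)$ together with a union bound over this net, and then passing from the net to the full sparse set via the restricted operator norm in the usual discretization step, yields that with probability at least $1-c_1\exp(-c_2\log p)$,
\benr
\frac{1}{n}\sti\langle z_i,v\rangle^2\ge \tfrac12\la_{\min}(\Si_z)\|v\|_2^2\quad\text{for all }v\text{ with }\|v\|_0\le 2m.\nn
\eenr
The choice $m\asymp n/\log p$ is exactly what makes the failure exponent $c_um\log p-c_un$ negative, and the factor $\max\{\si_z^4/\la_{\min}^2(\Si_z),1\}$ in the final bound is generated here, through the comparison of the resolution $t$ against the two regimes of the Bernstein exponent when solving for the admissible $m$.

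The decisive step, and the one I expect to be the main obstacle, is converting this sparse lower bound into a statement valid for \emph{all} $\delta\in\R^p$ at the cost of an $\ell_1$ slack term. Here I would invoke the cone-decomposition / peeling lemma (the argument of \cite{rudelson2012reconstruction}, as adapted in \cite{loh2012}): if $\frac1n\sti\langle z_i,v\rangle^2\ge\alpha\|v\|_2^2$ holds uniformly over all $2m$-sparse $v$, then for every $\delta$ one has $\frac1n\sti\langle z_i,\delta\rangle^2\ge\tfrac{\alpha}{2}\|\delta\|_2^2-\tfrac{c_u\alpha}{m}\|\delta\|_1^2$. This is where the additive $\|\delta\|_1^2$ penalty appears and where its coefficient is pinned down; substituting $\alpha=\tfrac12\la_{\min}(\Si_z)$ and $m\asymp n/\log p$ turns $\tfrac{c_u\alpha}{m}\|\delta\|_1^2$ into the advertised $c_u\la_{\min}(\Si_z)\max\{\si_z^4/\la_{\min}^2(\Si_z),1\}\tfrac{\log p}{n}\|\delta\|_1^2$, completing (i). For the upper bound (ii) I would run the identical argument using the other tail of the Bernstein inequality and the sparse upper eigenvalue, the constant $\tfrac32\la_{\max}(\Si_z)$ arising from choosing the resolution comparable to $\la_{\max}(\Si_z)$ in the covering step.
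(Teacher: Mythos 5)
The paper supplies no proof of this lemma at all: like Lemma \ref{lem:lwstochb}, it is an auxiliary result imported from the supplementary material of \cite{loh2012}, and your reconstruction is exactly the argument given there --- Bernstein concentration for the sub-exponential projections $\langle z_i,\delta\rangle^2$, a union bound over a net of $m$-sparse unit vectors with $m\asymp (n/\log p)\min\{\la_{\min}^2(\Si_z)/\si_z^4,1\}$ (which is where the factor $\max\{\si_z^4/\la_{\min}^2(\Si_z),1\}$ is born), and the Rudelson--Zhou transfer principle to trade sparsity for the $\|\delta\|_1^2$ slack. The one cosmetic slip is in how you quote the extension lemma: as used in \cite{loh2012} it is applied two-sidedly to the centered deviation $|v^T(\frac{1}{n}\sum_i z_iz_i^T-\Si_z)v|$ over sparse $v$ (a one-sided sparse lower bound on the empirical Gram matrix alone is not the right hypothesis), but since your net argument already establishes precisely that two-sided control, nothing of substance is missing and both (i) and (ii) follow from the same deviation bound.
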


\vspace{2mm}
\begin{lem}\label{lem:durrett} Suppose $X$ and $Y$ are independent random variables. Let $\phi$ be a function with $E|\phi(X,Y)|<\iny$ and let $g(x)=E\phi(x,Y),$ then
	\benr
	E\big(\phi(X,Y)|X\big)=g(X)\nn
	\eenr
\end{lem}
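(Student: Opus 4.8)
The plan is to verify directly the two defining properties of the conditional expectation $E(\phi(X,Y)\mid X)$: namely that $g(X)$ is $\sigma(X)$-measurable, and that $E[g(X)\mathbf{1}_A]=E[\phi(X,Y)\mathbf{1}_A]$ for every $A\in\sigma(X).$ The first property is essentially automatic once $g$ is known to be Borel measurable, while the second will follow from independence combined with Fubini's theorem.

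First I would settle the measurability of $g.$ Writing $\mu_X,\mu_Y$ for the laws of $X$ and $Y,$ independence gives that the joint law of $(X,Y)$ is the product measure $\mu_X\times\mu_Y.$ Applying Tonelli's theorem to $|\phi|$ (which is integrable against the product measure precisely because $E|\phi(X,Y)|<\iny$) shows that $x\mapsto g(x)=\int\phi(x,y)\,d\mu_Y(y)$ is finite for $\mu_X$-almost every $x$ and is a Borel function of $x.$ Hence $g(X)$ is $\sigma(X)$-measurable, giving the first requirement.

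For the defining identity, it suffices to treat $A=\{X\in C\}$ for a Borel set $C,$ since every element of $\sigma(X)$ is of this form. Expressing the expectation as an integral against the joint law and using independence,
$$E[\phi(X,Y)\mathbf{1}_{\{X\in C\}}]=\int\!\!\int \phi(x,y)\mathbf{1}_C(x)\,d\mu_Y(y)\,d\mu_X(x),$$
where the interchange of the order of integration is licensed by Fubini's theorem, which applies exactly because of the hypothesis $E|\phi(X,Y)|<\iny.$ The inner integral is $\mathbf{1}_C(x)g(x),$ so the right-hand side equals $\int\mathbf{1}_C(x)g(x)\,d\mu_X(x)=E[g(X)\mathbf{1}_{\{X\in C\}}],$ which is the desired identity.

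The main obstacle here is technical rather than conceptual: one must verify carefully that $g$ is genuinely measurable and that Fubini is applicable, both of which rest on identifying the joint law with the product measure (from independence) and on the integrability assumption. A clean alternative that sidesteps explicit appeal to Fubini is the standard machine: verify the claim first for $\phi(x,y)=\mathbf{1}_A(x)\mathbf{1}_B(y),$ where independence yields $E(\mathbf{1}_B(Y)\mid X)=P(Y\in B)$ and hence $E(\phi(X,Y)\mid X)=\mathbf{1}_A(X)P(Y\in B)=g(X)$ immediately; then extend by linearity to simple functions, by monotone convergence to nonnegative measurable $\phi,$ and by splitting into positive and negative parts to integrable $\phi.$
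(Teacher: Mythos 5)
Your proof is correct. The paper does not prove this lemma itself — it only cites Example 1.5, p.~222 of Durrett — and your verification of the two defining properties of conditional expectation via the product-measure/Fubini argument is precisely the standard proof given in that reference, so there is nothing to add.
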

This is an elementary result on conditional expectations and is stated for the reader's convenience. A straightforward proof can be found in Example 1.5. page 222, \cite{durrett2010probability}.

\vspace{2mm}
\begin{lem}\label{lem:lwstochb} If $X\in \R^{n\times p_1}$ is a zero mean subgaussian matrix with parameters $(\Si_x,\si_x^2)$, then for any fixed (unit) vector in $v\in\R^{p_1},$ we have
	\benr
	(i)\,\, P\Big(\Big|\|Xv\|_2^2-E\|Xv\|_2^2\Big|\ge nt\Big)\le \exp\Big(-cn\min\Big\{\frac{t^2}{\si_x^4},\frac{t}{\si_x^2}\Big\}\Big)\hspace{22mm}\nn
	\eenr
	Moreover, if $Y\in \R^{n\times p_2}$ is a zero mean subgaussian matrix with parameters $(\Si_y,\si_y^2),$ then
	\benr
	(ii)\,\,P\Big(\|\frac{Y^TX}{n}-{\rm cov}(y_i,x_i)\|_{\iny}\ge t\Big) \le 6p_1p_2\exp\Big(-cn \min\Big\{\frac{t^2}{\si_x^2\si_y^2},\frac{t}{\si_x\si_y}\Big\}\Big)\nn
	\eenr
	where $x_i,y_i$ are the $i^{th}$ rows of $X$ and $Y$ respectively. In particular, if $n\ge c\log p,$ then
	\benr
	(iii)\,\,P\Big(\|\frac{Y^TX}{n}-{\rm cov}(y_i,x_i)\|_{\iny}\ge c\si_x\si_y \sqrt{\frac{\log p}{n}}\Big) \le c_1\exp(-c_2\log p).\hspace{14mm} \nn
	\eenr
\end{lem}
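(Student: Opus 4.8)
This is a standard sub-Gaussian/sub-exponential concentration statement (it is the lemma of \cite{loh2012} restated), and the plan is to reduce each quantity to a sum of independent centered sub-exponential random variables and then invoke Bernstein's inequality. For Part (i), I would first write $\|Xv\|_2^2=\sti \langle x_i,v\rangle^2,$ where $x_1,\dots,x_n$ are the i.i.d.\ rows of $X.$ Since $\|v\|_2=1$ and $X$ is $\si_x$-subgaussian, each scalar $\langle x_i,v\rangle$ is mean-zero and $\si_x$-subgaussian, so its square is sub-exponential with $\|\langle x_i,v\rangle^2\|_{\psi_1}=\|\langle x_i,v\rangle\|_{\psi_2}^2\le c_u\si_x^2$ by the equivalence of Orlicz norms. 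Hence the centered variables $\langle x_i,v\rangle^2-E\langle x_i,v\rangle^2$ are i.i.d.\ sub-exponential with parameter of order $\si_x^2.$

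The second step for Part (i) is to apply the Bernstein inequality for sums of independent centered sub-exponential variables to $\sti\big(\langle x_i,v\rangle^2-E\langle x_i,v\rangle^2\big),$ which yields a two-sided tail of the form $2\exp\big(-c_un\min\{t^2/\si_x^4,\,t/\si_x^2\}\big).$ The leading factor $2$ is absorbed into the constant in the exponent, giving exactly the claimed bound.

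For Part (ii), I would treat each of the $p_1p_2$ entries of $Y^TX/n$ separately. The $(k,l)$ entry minus its mean equals $\frac1n\sti\big(y_{il}x_{ik}-E[y_{il}x_{ik}]\big),$ and a product of two subgaussian variables is sub-exponential: either directly by Cauchy--Schwarz on the Orlicz norms, $\|y_{il}x_{ik}\|_{\psi_1}\le \|y_{il}\|_{\psi_2}\|x_{ik}\|_{\psi_2}\le c_u\si_x\si_y,$ or via the polarization identity $y_{il}x_{ik}=\frac14\big[(y_{il}+x_{ik})^2-(y_{il}-x_{ik})^2\big],$ which reduces matters to squares of subgaussian variables exactly as in Part (i). Bernstein's inequality applied to each centered entry then gives a per-entry tail $\exp\big(-c_un\min\{t^2/(\si_x^2\si_y^2),\,t/(\si_x\si_y)\}\big),$ and a union bound over all $p_1p_2$ entries, with the polarization prefactors collected, produces the stated $6p_1p_2$ factor.

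Finally, Part (iii) is an immediate specialization of Part (ii): taking $t=c\si_x\si_y\sqrt{\log p/n}$ and using $n\ge c\log p$ ensures $t/(\si_x\si_y)=c\sqrt{\log p/n}\le 1,$ so the minimum in the exponent is attained at $t^2/(\si_x^2\si_y^2)=c^2\log p/n.$ The bound becomes $6p_1p_2\exp(-c_uc^2\log p),$ which is absorbed into $c_1\exp(-c_2\log p)$ for $c$ chosen large enough relative to the dimension growth. The only delicate point throughout is the bookkeeping of the sub-exponential parameters and of the union-bound constant; no genuinely new idea beyond the Orlicz-norm equivalence and Bernstein's inequality is needed, which is why I expect the main (and only real) obstacle to be tracking the constants so that the factor $6$ and the exact exponent match the stated form.
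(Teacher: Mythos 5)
Your proposal is correct and follows exactly the route the paper relies on: the paper does not prove this lemma at all, but simply cites it as Lemma 14 of the Loh--Wainwright supplementary materials (with Part (i) attributed to Proposition 5.16 of Vershynin), and those sources argue precisely as you do --- reduce to sums of i.i.d.\ centered sub-exponential variables via the Orlicz-norm equivalence $\|\xi^2\|_{\psi_1}\le\|\xi\|_{\psi_2}^2$, apply Bernstein, and union-bound over entries. The only cosmetic remark is that the prefactor $6$ in Part (ii) arises in the cited proof from the three-term square decomposition $y^Tx=\tfrac12(\|y+x\|^2-\|y\|^2-\|x\|^2)$ (three events, each with a two-sided factor of $2$); your two-term polarization or direct Orlicz bound would give a smaller constant, which is of course still dominated by $6p_1p_2$, so nothing is lost.
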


This lemma provides tail bounds on subexponential r.v.'s and is as stated in Lemma 14 of \cite{loh2012}: supplementary materials. The first part of this lemma is a restatement of Proposition 5.16 of \cite{vershynin2010introduction} and the other two part are derived via algebraic manipulations of the product under consideration.

\end{document}